\newcommand{\GL}[1][]{\ensuremath{\mathop{\mathcal{L}_{#1}}}\xspace}
\newcommand{\GR}[1][]{\ensuremath{\mathop{\mathcal{R}_{#1}}}\xspace}
\newcommand{\GD}[1][]{\ensuremath{\mathop{\mathcal{D}_{#1}}}\xspace}
\newcommand{\GH}[1][]{\ensuremath{\mathop{\mathcal{H}_{#1}}}\xspace}
\newcommand{\GJ}[1][]{\ensuremath{\mathop{\mathcal{J}_{#1}}}\xspace}
\newcommand{\GLE}{\GL[\!A]}
\newcommand{\GRE}{\GR[\!A]}
\newcommand{\GDE}{\GD[A]}
\newcommand{\GLQ}{\GL[Q]}
\newcommand{\GRQ}{\GR[Q]}
\newcommand{\GDQ}{\GD[Q]}
\newcommand{\GJQ}{\GJ[Q]}
\newcommand{\GLs}[1][]{\ensuremath{\mathop{\mathcal{L}^*_{#1}}}\xspace}
\newcommand{\GRs}[1][]{\ensuremath{\mathop{\mathcal{R}^*_{#1}}}\xspace}
\newcommand{\GDs}[1][]{\ensuremath{\mathop{\mathcal{D}^*_{#1}}}\xspace}
\newcommand{\GJs}[1][]{\ensuremath{\mathop{\mathcal{J}^*_{#1}}}\xspace}
\newcommand{\GLt}[1][]{\ensuremath{\mathop{\widetilde{\mathcal{L}}_{#1}}}\xspace}
\newcommand{\GRt}[1][]{\ensuremath{\mathop{\widetilde{\mathcal{R}}_{#1}}}\xspace}
\newcommand{\GDt}[1][]{\ensuremath{\mathop{\widetilde{\mathcal{D}}_{#1}}}\xspace}
\newcommand{\GJt}[1][]{\ensuremath{\mathop{\widetilde{\mathcal{J}}_{#1}}}\xspace}
\newcommand{\GLscGRs}{\mathop{\mathcal{L}^*\!\!\circ\!\mathcal{R}^*}}
\newcommand{\GRscGLs}{\mathop{\mathcal{R}^*\!\!\circ\!\mathcal{L}^*}}
\newcommand{\GHs}{\ensuremath{\mathop{\mathcal{H}^*}}\xspace}
\newcommand{\GHt}{\ensuremath{\mathop{\widetilde{\mathcal{H}}}}\xspace}
\newcommand{\TX}[1][X]{\ensuremath{\mathcal{T}_{#1}}\xspace}
\newcommand{\AlgA}{\ensuremath{\mathcal{A}}\xspace}
\newcommand{\AlgB}{\ensuremath{\mathcal{B}}\xspace}
\newcommand{\AlgC}{\ensuremath{\mathcal{C}}\xspace}
\newcommand{\EndA}[1][\AlgA]{\ensuremath{\mathrm{End}(#1)}\xspace}
\newcommand{\TAB}{\ensuremath{T(\AlgA, \AlgB)}\xspace}
\newcommand{\clotX}[1][X]{\ensuremath{\langle #1 \rangle}\xspace}
\newcommand{\ClotX}[1][X]{\ensuremath{\big\langle #1 \big\rangle}\xspace}
\newcommand{\Csts}{\ensuremath{\ClotX[\emptyset]}\xspace}
\newcommand{\nonempty}{\ensuremath{\neq\emptyset}\xspace}
\newcommand{\Qc}{\ensuremath{Q^c}\xspace}
\newcommand{\Tec}{\ensuremath{T_{e^+}^{\,c}}\xspace}
\newcommand{\ob}[1]{\overline{#1}}
\newcommand{\pmap}[1]{\begin{pmatrix} #1 \end{pmatrix}}
\newcommand{\set}[1]{\ensuremath{\left\lbrace #1 \right\rbrace}\xspace}
\newcommand{\im}{\mathrm{im\,}}
\newcommand{\codim}[1][]{\mathrm{codim_{#1}\,}}
\newcommand{\Card}[1]{\lvert #1 \rvert}
\newcommand{\priv}{\setminus}
\newcommand{\id}{\mathrm{id}}
\newcommand{\N}{\ensuremath{\mathbb{N}}}
\theoremstyle{plain}
\newtheorem{theorem}{Theorem}[section]
\newtheorem{lemma}[theorem]{Lemma}
\newtheorem{corollary}[theorem]{Corollary}
\newtheorem{proposition}[theorem]{Proposition}
\theoremstyle{definition}
\newtheorem*{example}{Example}
\newtheorem*{remark}{Remark}
\theoremstyle{remark}
\newtheorem*{oldproof}{Proof}
\renewenvironment{proof}[1][{}]{\begin{oldproof}[#1]}{\end{oldproof}}
\begin{document}

\title[The semigroup of endomorphisms with restricted range]{The semigroup of endomorphisms with restricted range of an independence algebra.}
\thanks{This research has been carried out thanks to a PhD studentship from the department of Mathematics of the University of York.}
\date{May 2022}

\author{Ambroise Grau}
\address{A. Grau, University of York, Department of Mathematics, YO10 5DD York, UK}
\email{ambroise.grau@york.ac.uk}

\begin{abstract}
Since its introduction by Symons, the semigroup of maps with restricted range has been studied in the context of transformations on a set, or of linear maps on a vector space. 
Sets and vector spaces being particular examples of independence algebras, a natural question that arises is whether by taking the semigroup $\TAB$ of all endomorphisms of an independence algebra $\AlgA$ whose image lie in a subalgebra $\AlgB$, one can obtain corresponding results as in the cases of sets and vector spaces.
In this paper, we put under a common framework the research from Sanwong, Sommanee, Sullivan, Mendes-Gonçalves and all their predecessors.
We describe Green's relations as well as the ideals of $\TAB$ following their lead. 
We then take a new direction, completely describing all of the extended Green's relations on $\TAB$.
We make no restriction on the dimension of our algebras as the results in the finite and infinite dimensional cases generally take the same form.
\end{abstract}
\keywords{Independence algebras, Endomorphisms with restricted range, Green's relations, Extended Green's relations}
\subjclass[2010]{20M10, 20M20, 08A35}

\maketitle

\section{Introduction}\label{section intro}
The full transformation monoid $\TX$ of a set $X$, the monoid of linear transformations $\EndA[V]$ on a vector space $V$, as well as their generalisation to the endomorphism monoid $\EndA$ of an independence algebra $\AlgA$ have been the focus of a large amount of research during the last decades.
Following Malcev's work on the automorphism group of $\TX$, Symons \cite{Symons} investigated the automorphism group of the subsemigroup of $\TX$ consisting of all maps with range restricted to $Y\subseteq X$, denoted by $T(X,Y)$.
Later, Nenthein, Youngkhong and Kemprasit started the study of the properties of $T(X,Y)$ which led to similar studies of $T(V,W)$, the semigroup of linear transformations of a vector space $V$ with restricted range in a subspace $W$.
Since both sets and vector spaces lie in the general framework of independence algebras, we use this more generic structure to describe and extend these studies. 
We give full details of independence algebras in Section \ref{section prelim}. 
For the moment, it is enough for the reader to know they behave similarly to sets and vector spaces in terms of endomorphisms.

Let \AlgA be an independence algebra, and $\AlgB$ be a subalgebra of \AlgA. 
We make no global assumptions concerning the cardinalities of \AlgA and \AlgB, which means that these could be either finite or infinite dimensional algebras. 
Define by $\TAB$ the semigroup of all endomorphisms from $\AlgA$ to $\AlgB$, that is, the set
$$\TAB = \{ \alpha\in\EndA \mid \im\alpha\subseteq \AlgB\}.$$
Of course, if $\AlgB=\emptyset$ then $\TAB=\emptyset$ and we will therefore exclude this case.
Otherwise the set $\TAB$ is easily seen to be a subsemigroup of $\EndA$, and if $\AlgB = \AlgA$ it is equal to the monoid $\EndA$. 
In fact, we will prove later in Corollary \ref{TAB almost never isom EndC} that unless $\AlgB=\AlgA$ or $\AlgB$ is a singleton, then $\TAB$ is not isomorphic to the endomorphism monoid $\EndA[\AlgC]$ of any independence algebra $\AlgC$.

In the case where $\AlgA$ is simply a set, Green's relations on $\TAB$ have been studied by Sanwong and Sommanee in \cite{SS08}, while Sullivan \cite{Sul07} carried out the same work when $\AlgA$ is a vector space, following the exhibition of the regular elements in both cases by Nenthein, Youngkhong and Kemprasit in \cite{NYK05,NK07}.
Later, Mendes-Gonçalves and Sullivan \cite{MGS10,Sul07} gave the structure of the ideals of \TAB for both the set and the vector space case.
The sets $\set{\alpha\in\TAB\mid \mathrm{rank}~\alpha< r}$ for any cardinal $r$ are ideals of \TAB.
However, unlike the case for $\EndA$ these ideals are not the only ones present in $\TAB$ in general, nor do they form a chain.
In \cite{MGS10,Sul07}, the authors used the ideal structure when the dimension of $\AlgB$ is at least $3$ in order to construct two ideals that are not comparable under containment. Doing this, they showed a weaker version of Corollary \ref{TAB almost never isom EndC}, namely, that if $\dim\AlgB\geq 3$ then $\TAB$ cannot be isomorphic to $\EndA[\AlgC]$ for any appropriate $\AlgC$ since ideals of the latter  always form a chain.
Even if the proof we give for this corollary does not rely on the ideal structure of $\TAB$, we will give similar examples of ideals not forming a chain to give the reader an idea on this particular behaviour.

After recalling the notion of independence algebras used in this paper, the first sections will closely follow the work of previous authors. 
We will thus directly state their common results and simply enhance their proofs to fit under the general framework of independence algebras.
We start by looking at the regular elements in Section \ref{section on Q}, and derive Green's relations in Section \ref{section on basic Greens} as well as the ideal structure in Section \ref{section on ideals}.
An astute reader can retrace in the aforementioned papers the origin of the ideas used throughout the proofs present in these three sections.

The major part of this article is devoted to giving new results on the semigroup $\TAB$ where $\AlgA$ is an independence algebra and $\AlgB$ is a subalgebra of $\AlgA$.
From Corollary \ref{cor tab not reg in general}, we have that $\TAB$ is not a regular semigroup in general, and in order to further understand its structure, we turn ourselves to the extended Green's relations $\GLs$, $\GRs$, $\GLt$ and $\GRt$.
These equivalence relations were introduced to complement the study of regular semigroups since these only differ from the initial Green's relations in non-regular semigroups.
While working on the structure of semigroups that are not necessarily regular, Fountain \cite{Fountain-abundant} introduced the notion of \emph{abundant semigroups}, later extended further to the concept of \emph{Fountain semigroups} (formerly called semi-abundant semigroups by El-Qallali \cite{ElQallali}).
These are semigroups in which every $\GLs$ and $\GRs$-class [resp. $\GLt$ and $\GRt$] contains an idempotent, emulating the role that the relations $\GL$ and $\GR$ play in a regular semigroup.
It is well-known that the relations $\GL$ and $\GR$ commute.
However, this is not the case for $\GLs$ and $\GRs$ nor $\GLt$ and $\GRt$ and this presents extra difficulties in describing the joins $\GDs=\GLs\vee\GRs$ and $\GDt=\GLt\vee\GRt$.
Additionally, the equivalence relations $\GJs$ and $\GJt$ induced by the principal $*$-ideals and $\sim$-ideals complete the list of studied relations and are shown to coincide with $\GDs$ and $\GDt$.
All of these extended Green's relations are fully characterised on $\TAB$ in Section \ref{section on extended Greens}, and allow us to give a new example of right-abundant semigroups (that is, where all $\GLs$-classes contain an idempotent), and thus right-Fountain semigroups, that are not abundant nor Fountain since not all $\GRs$ and $\GRt$-classes contain an idempotent.

\section{Preliminaries and notation}\label{section prelim}

In order to allow this paper to be almost self-contained, we give here a short overview of the notion of independence algebras that will be necessary.

These were introduced by Gould in \cite{Gould95} in order to account for the similarities between endomorphisms of sets, vector spaces and free group acts, and to allow their study under a more general setting.
Incidentally, it was noted that independence algebras also corresponds to the $v^*$-algebras described by Narkiewicz \cite{Nark62}.

Let $\AlgA = \ClotX[A; F]$ be a (universal) algebra, with $A$ a non-empty set as its \emph{universe} and $F$ its (possibly infinite) set of \emph{fundamental operations} that all have finite arity. 
As a convention, we will always denote the underlying universe of an algebra, say $\mathcal{K}$, by the corresponding non-scripted capital letter, here $K$.
Fundamental operations can be composed as follows: if $f$ is an $n$-ary operation and $g_1, \dots, g_n$ are all $m$-ary operations, then $f\circ(g_1, \dots, g_n)$ is an $m$-ary operation defined by $f\circ(g_1, \dots, g_n)(x) = f(g_1(x), \dots, g_n(x))$ for all $x\in A^m$.
We also call \emph{projections} the maps $\pi_i^n\colon A^n\to A$ for all $1\leq i \leq n$ sending the $n$-tuple $(x_1,\dots,x_n)$ to $x_i$.
With this, for an integer $n\geq 0$, an $n$-ary \emph{term operation $t$} is a map $t\colon A^n\to A$ built by successive compositions of fundamental operations and projections, where a nullary term operation $f()=a$ will be identified with its image $a\in A$ and will be called a \emph{constant}. 
For any $X\subseteq A$, the universe of the \emph{subalgebra generated by $X$} will be denoted by $\clotX[X]$ and consists of all elements $t(a_1,\dots, a_n)$ where $t$ is a term operation and $a_1,\dots,a_n\in X$. 
The subalgebra generated by $\emptyset$ consists of all the elements that can be obtained using nullary term operations, and thus $\Csts=\emptyset$ if and only if $\AlgA$ has no constants.
In order to facilitate readability and to follow classical semigroup notation, terms of our algebras will be operating on the left while functions on our algebras will be operating on the right. 
We will also give precedence to terms in the evaluation, so that fewer parentheses are needed.

A set $X\subseteq A$ is said to be \emph{independent} if for all $x\in X$ we have that $x\notin \ClotX[X\priv\set{x}]$, and it is a \emph{basis} of $\AlgA$ (or $A$) if it is independent and generates $A$.
We say that an algebra $\AlgA$ satisfy the \emph{exchange property} (EP) if all subsets $X$ of $A$ satisfy the following condition:
\begin{center}
for all $a,b\in A$, if $b\in\ClotX[X\cup\set{a}]$ and $b\notin\clotX[X]$, then $a\in\ClotX[X\cup\set{b}]$.
\end{center}
An important point from \cite{Gould95} is that any algebra $\AlgA$ that satisfies (EP) has a basis, and in addition in such algebra, a set $X$ is a basis if and only if it is a maximal independent set, if and only if it is a minimal generating set.
Additionally, in such an algebra, any independent set can be extended to a basis of $\AlgA$ and all bases have the same cardinality, called the \emph{dimension} (or the \emph{rank}) of $\AlgA$, denoted by $\dim\AlgA$.
Furthermore, for any $B\subseteq A$, the \emph{dimension} (or \emph{rank}) of $B$ is the cardinality of a basis $X$ of the subalgebra $\AlgB=\clotX[B]$ and the \emph{codimension} (or \emph{corank}) of $\AlgB$ in $\AlgA$, denoted $\codim[A] B$, is the cardinality of an independent set $Z$ such that $X\sqcup Z$ forms a basis of $\AlgA$.

For $B,C\subseteq A$, a function $\alpha\colon B \to C$ is a \emph{homomorphism of \AlgA} if for all $n$-ary term operations $t$ and all $b_1,\dots,b_n\in B$, we have that $t(b_1,\dots,b_n)\alpha = t(b_1\alpha, \dots, b_n\alpha)$.
Moreover, $\alpha$ is an \emph{isomorphism} between the subalgebras $\AlgB=\clotX[B]$ and $\AlgC=\clotX[C]$ if it is also bijective, which will be denoted by $\AlgB \cong\AlgC$.
Note that in particular, this forces any homomorphism to act as the identity on constants.
An \emph{endomorphism of $\AlgA$} is a homomorphism $\alpha\colon A\to A$, and the set of all endomorphisms of $\AlgA$ is denoted by $\EndA$, which is a monoid under composition of functions.

An algebra $\AlgA$ satisfying (EP) has the \emph{free basis property} (F) if any map defined from a basis $X\subseteq A$ to $A$ can be extended to an endomorphism of $\AlgA$.
An \emph{independence algebra} is an algebra $\AlgA$ that satisfies both the exchange property and the free basis property.
Consequently, for $\AlgA$ an independence algebra, any subalgebra of $\AlgA$ is an independence algebra in its own right. 
Lastly, for an endomorphism $\alpha\in\EndA$, the \emph{rank of $\alpha$} is defined as the rank of the subalgebra $A\alpha$.
Independence algebras have been classified by Urbanik \cite{Urbanik} and include, as previously mentioned, sets and vector spaces.

We let $\AlgA$ be an independence algebra with a non-empty subalgebra $\AlgB$.
It is clear that $\TAB$, as defined in Section \ref{section intro}, is a subsemigroup of $\EndA$, but not usually a monoid as given by the following lemma:

\begin{lemma}\label{TAB almost never monoid}
The semigroup $\TAB$ is a monoid if and only if $\AlgB = \AlgA$, or $\AlgB$ is a singleton.
\end{lemma}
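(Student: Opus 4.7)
The plan is to handle the two directions of the biconditional separately. The ($\Leftarrow$) direction is essentially by inspection. When $\AlgB = \AlgA$ the semigroup $\TAB$ coincides with $\EndA$, with identity $\id_A$, as already observed in the paper. When $\AlgB = \{c\}$ is a singleton, the subalgebra condition forces every term operation to satisfy $t(c,\ldots,c) = c$, so the constant map $\alpha_c \colon A \to A$ given by $a\alpha_c = c$ is a well-defined endomorphism. Any $\alpha\in\TAB$ has image in $\{c\}$, so $\alpha=\alpha_c$; thus $\TAB = \{\alpha_c\}$ is a trivial one-element monoid.

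For the ($\Rightarrow$) direction I will argue by contrapositive: assume $\AlgB\neq\AlgA$ and $|B|\geq 2$, and suppose for contradiction that $e\in\TAB$ is an identity element. The plan is to extract information from both equations $\alpha e = \alpha$ and $e\alpha = \alpha$ in turn. Using (EP), I fix a basis $X_B$ of $\AlgB$ and extend it to a basis $X$ of $\AlgA$. Using the free basis property I construct a retract $\pi\in\TAB$ which acts as the identity on $X_B$ and sends the remaining basis elements to some fixed element of $B$; its image is then the whole of $B$. The equation $\pi e = \pi$ says that $e$ fixes every element of $\im \pi$, giving $e|_B = \id_B$.

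The contradiction comes from the second equation. Since $B\subsetneq A$, the set $X_B$ is a proper subset of $X$, so I pick $y\in X\setminus X_B$; then $ye\in B$ and $ye\neq y$. Using $|B|\geq 2$, I choose $b\in B$ with $b\neq ye$, and construct $\alpha\in\TAB$ which fixes $X_B$ pointwise, sends $y$ to $b$, and sends the remaining basis elements arbitrarily into $B$. This $\alpha$ is then the identity on $B$, so $(ye)\alpha = ye$, whereas $y\alpha = b \neq ye = (ye)\alpha$, contradicting $e\alpha = \alpha$. The main obstacle I anticipate is in cleanly organising the two equations: the first must be used to pin $e$ down on all outputs of $\TAB$-maps, and the second must then be violated through a careful choice of $\alpha$ separating $y$ from $ye$; the singleton case additionally requires the small observation that $\{c\}$ being a subalgebra forces all operations to fix $c$, without which $\alpha_c$ would fail to be a homomorphism.
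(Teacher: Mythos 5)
Your proof is correct and follows essentially the same strategy as the paper's: extend a basis of $\AlgB$ to one of $\AlgA$, build maps in $\TAB$ that fix $B$ pointwise and send an outside basis element into $B$, and use the purported identity to force a contradiction (the paper runs this with two maps hitting $b_1$ and $b_2$ to conclude $b_1=b_2$, while you pick a single target $b\neq ye$; your intermediate deduction $e\lvert_B=\id_B$ is harmless but not actually needed). No substantive difference.
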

\begin{proof}
Clearly if $\AlgB = \AlgA$ then $\TAB=\EndA$, and if $\AlgB$ is a singleton, say $\set{b}$, then $\TAB = \set{c_b}$, where $c_b$ is the constant map with value $b$. 
Moreover, both of these are indeed monoids.

Let us now assume that $\AlgB\subsetneq \AlgA$ and let $b_1, b_2\in B$. 
Since $\AlgA$ satisfies (EP) we extend a basis of $B$ to a basis $X$ of $A$ and take $a$ to be one of the basis element in $A\priv B$. 
Define two maps $\alpha,\beta\colon X \to A$ such that $a\alpha= b_1$, $a\beta=b_2$ and $\alpha$, $\beta$ act as identity maps on $X\cap B$ (with elements of $X\priv (B\cup\set{a})$ arbitrarily sent to some elements in $B$).
From the free basis property we know that $\alpha$ and $\beta$ can be extended to endomorphisms of $\AlgA$, and thus we can assume that $\alpha$ and $\beta$ belonged to $\TAB$ in the first place.
Suppose that $\TAB$ is a monoid, and denote its identity by $\epsilon$.
Now let $c\in B$ be such that $a\epsilon=c$. 
Then, as $\epsilon$ is a left identity for $\alpha$ and $\alpha\lvert_B$ is the identity on $B$, we need $b_1 = a\alpha = a\epsilon\alpha = c\alpha = c$.
Similarly, $\epsilon$ is a left identity for $\beta$ and thus $b_2 = a\beta = a\epsilon\beta = c$. Therefore $b_1=b_2$, which means that $\AlgB$ is a singleton.
This shows that for any proper subalgebra $\AlgB$ with at least two distinct elements, the semigroup $\TAB$ does not contain a two-sided identity, and thus is not a monoid.\qed
\end{proof}

From this as well as the fact that the trivial monoid is the endomorphism monoid of an algebra with a single element, we directly obtain the following result:
\begin{corollary}\label{TAB almost never isom EndC}
The semigroup $\TAB$ is isomorphic to $\EndA[\AlgC]$ for $\AlgC$ an independence algebra if and only if $\AlgB = \AlgA$, or $\AlgB$ is a singleton.
\end{corollary}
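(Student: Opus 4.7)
The plan is to reduce the corollary to the preceding lemma by using the simple observation that any endomorphism monoid $\EndA[\AlgC]$ contains a two-sided identity (namely $\id_C$), so being a monoid is an isomorphism invariant that $\TAB$ must satisfy if the isomorphism exists.

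First I would handle the easy direction. If $\AlgB = \AlgA$ then $\TAB = \EndA$ by definition, which is already of the required form with $\AlgC = \AlgA$. If $\AlgB = \{b\}$ is a singleton, then every $\alpha\in\TAB$ must send all of $A$ into $\{b\}$, so $\TAB = \{c_b\}$ is trivial; this is isomorphic to $\EndA[\AlgC]$ for any one-element independence algebra $\AlgC$ (such an algebra exists, e.g.\ a one-element set viewed as an independence algebra, as mentioned in the sentence preceding the corollary).

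For the converse, suppose $\TAB \cong \EndA[\AlgC]$ for some independence algebra $\AlgC$. The monoid $\EndA[\AlgC]$ has the identity map $\id_C$ as a two-sided identity, hence is a monoid. Isomorphism of semigroups preserves the existence of a two-sided identity, so $\TAB$ is a monoid as well. Lemma~\ref{TAB almost never monoid} then immediately forces $\AlgB = \AlgA$ or $\AlgB$ to be a singleton.

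There is no real obstacle here; the only subtlety is noticing that $\EndA[\AlgC]$ is always a monoid (so that Lemma~\ref{TAB almost never monoid} can be invoked) and, in the singleton case, producing a witness $\AlgC$ whose endomorphism monoid is trivial, which the paper has already flagged by mentioning that the trivial monoid arises as $\EndA[\AlgC]$ for an algebra with a single element.
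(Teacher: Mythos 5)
Your proof is correct and follows essentially the same route as the paper: the corollary is deduced directly from Lemma~\ref{TAB almost never monoid} by noting that $\EndA[\AlgC]$ is always a monoid (an isomorphism invariant), together with the observation that the trivial monoid is realised as the endomorphism monoid of a one-element algebra.
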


In order to facilitate the reading of the proofs, the following notation will be used throughout the paper:
\begin{itemize}
    \item the image of $\alpha\in\EndA$ is $\im\alpha = A\alpha$ and its rank is $\rho(\alpha)= \dim(\im\alpha)$;
    \item the set $\{e_i\mid i\in I\}\subseteq A$ is abbreviated as $\{e_i\}$ without necessarily specifying the index set that will be denoted by capitalising the letter used as index, here $I$, and such a set will most of the time be non-empty and can be either finite or infinite;
    \item the closure operator is denoted by $\clotX[\cdot]$ and by writing $C=\clotX[b_1, b_2]$ we mean that $C$ is the subalgebra generated by $b_1$ and $b_2$, while having $C=\clotX[\set{b_1, b_2}]$ gives the additional information that $b_1$ and $b_2$ form a basis for $C$;
    \item similarly, by $B=\clotX[\set{e_i}\sqcup\set{f_j}]$ we mean that the sets $\{e_i\}$ and $\set{f_j}$ are independent sets and that together they form a basis for the subalgebra $\AlgB$, and by $B=C\sqcup\clotX[\set{x_k}]$ we mean that $C$ is a subalgebra of $B$ and that a basis of $C$ can be extended to a basis of $B$ through $\set{x_k}$;
    \item for $A = \ClotX[\{x_i\}\sqcup\{y_j\}]$, by writing $\alpha = \begin{pmatrix} x_i & y_j \\ b_i & d_j \end{pmatrix}$ we mean that $x_i\alpha=b_i$, $y_j\alpha=d_j$ for all $i\in I, j\in J$, and that this map is the endomorphism obtained by extension of the underlying map on the basis using the free basis property of our independence algebra. We also notice that the sets $\{b_i\}$ and $\{d_j\}$ are not necessarily independent and that their intersection is not necessarily empty, and if both are subsets of the subalgebra $\AlgB$, then $\alpha$ lies in $\TAB$;
    \item by writing $t(\ob{x_i})$, we mean that the $n$-ary term $t$ depends upon the set $\set{x_i}$, such that there exist a subset $\set{x_{i_1},\dots,x_{i_n}}\subseteq \set{x_i}$ with $t(\ob{x_i}) = t(x_{i_1},\dots,x_{i_n})$. This will be used by saying that for any $c\in C=\clotX[\set{x_i}]$ there exists a term $t$ such that $c=t(\ob{x_i})$;
    \item since $\TAB$ is not a monoid in general as noted in Lemma \ref{TAB almost never monoid}, we will denote by $\TAB^1$ the monoid obtained by adjoining an identity to $\TAB$.
\end{itemize}

We finish this section by giving a few lemmas that provide some useful initial properties of $\TAB$ inherited from those of $\EndA$.
The omitted proofs are well known and can be found in \cite{AF04,Gould95}.
\begin{lemma}
Let $\alpha\in \EndA$ and $\{x_i\}=X\subseteq A$. If $\{x_i\alpha\}$ is independent, then $X$ is an independent set.
\end{lemma}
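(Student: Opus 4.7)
The plan is to prove the statement by contrapositive: I will assume $X$ is not independent and derive that $\{x_i\alpha\}$ cannot be independent either. By the definition recalled in the preliminaries, if $X$ fails to be independent then some $x_k\in X$ lies in $\ClotX[X\priv\{x_k\}]$, so by the term-operation description of the closure operator there exist a term $t$ and elements $x_{i_1},\ldots,x_{i_n}\in X\priv\{x_k\}$ with $x_k=t(x_{i_1},\ldots,x_{i_n})$.

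The heart of the argument is then a one-line use of $\alpha$ being a homomorphism of $\AlgA$. Applying $\alpha$ to both sides of the above equality and using compatibility with term operations yields $x_k\alpha=t(x_{i_1}\alpha,\ldots,x_{i_n}\alpha)$, placing $x_k\alpha$ inside $\ClotX[\{x_{i_j}\alpha\mid 1\le j\le n\}]\subseteq\ClotX[\{x_i\alpha\mid i\neq k\}]$. This exhibits $x_k\alpha$ as lying in the closure of the rest of the family $\{x_i\alpha\}$, contradicting its independence.

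The only subtlety I anticipate concerns the convention for indexed sets introduced in the preliminaries: for the family $\{x_i\alpha\}$ to be independent one implicitly needs all $x_i\alpha$ to be distinct as well as to form an independent set, so that $\{x_i\alpha\mid i\neq k\}$ genuinely equals $\{x_i\alpha\}\priv\{x_k\alpha\}$. I would clarify this at the very start with a short observation that any repetition $x_j\alpha=x_k\alpha$ with $j\neq k$ already renders $\{x_i\alpha\}$ dependent, so the restriction of $\alpha$ to $X$ may be assumed injective. Once this is noted, no further difficulty arises, and the proof uses neither the exchange property (EP) nor the free basis property (F) beyond what is baked into $\alpha$ being an endomorphism; the argument is essentially the classical linear-algebraic fact that linear maps reflect independence of their images, transcribed into the universal-algebra language of term operations.
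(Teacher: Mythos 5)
Your argument is correct and is precisely the standard one; note that the paper itself gives no proof of this lemma, deferring to \cite{AF04,Gould95}, so there is no internal proof to compare against. The contrapositive step --- write $x_k=t(x_{i_1},\dots,x_{i_n})$ with $x_{i_1},\dots,x_{i_n}\in X\priv\set{x_k}$ and push the term $t$ through the homomorphism $\alpha$ --- is exactly the expected argument, and it handles the nullary case ($x_k$ a constant, hence $x_k\alpha$ a constant) for free. The distinctness caveat you raise is not mere bookkeeping but genuinely necessary for the statement to be true as literally written: reading $\set{x_i\alpha}$ purely as a set, the lemma fails in an affine space (an independence algebra with $\Csts=\emptyset$), where three distinct collinear points form a dependent set yet can be collapsed by an affine endomorphism to a single point, and a singleton is independent there. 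So the hypothesis must be read, as you do, as independence of the indexed family $(x_i\alpha)_{i\in I}$ with the $x_i\alpha$ pairwise distinct --- which is how the paper uses the lemma, namely to pull back bases of $\im\alpha$ to preimage bases --- and with that reading your proof is complete.
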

Following this, we define a \emph{preimage basis} of $\alpha\in\EndA$ to be an independent set $X\subseteq A$ such that $X\alpha$ is a basis for $\im\alpha$.
Notice that, by writing $\im\alpha=\clotX[\set{x_i\alpha}]$, if $\AlgA$ has constants then we have that $\rho(\alpha)=0$ if and only if $I=\emptyset$, in which case the empty set is a preimage basis for $\alpha$. 

\begin{lemma}\label{subalgebras of same rank are isomorphic}
In an independence algebra, all subalgebras of a given rank are isomorphic.
\end{lemma}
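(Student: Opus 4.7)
The plan is to pick bases of $\AlgB$ and $\AlgC$ of the same cardinality (available since $\dim\AlgB=\dim\AlgC$), match them up via a bijection, and lift this bijection to mutually inverse homomorphisms using the free basis property of the ambient algebra $\AlgA$. Concretely, let $\{x_i\}$ and $\{y_i\}$, with common index set $I$, be bases of $\AlgB$ and $\AlgC$ respectively. The degenerate case $I=\emptyset$ forces $B=C=\Csts$, so we may assume $I\nonempty$.

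The central step is producing a homomorphism $\alpha\colon\AlgB\to\AlgC$ with $x_i\alpha=y_i$. The free basis property as stated in the excerpt only guarantees extensions of maps from a basis of $\AlgA$ into $A$, not of maps from a basis of a proper subalgebra, so it does not directly apply. To bridge this gap, I would use (EP) to extend $\{x_i\}$ to a basis $X'$ of $\AlgA$, fix some element $a_0\in A$, and define $\psi\colon X'\to A$ by $x_i\psi=y_i$ for $i\in I$ and $x\psi=a_0$ for $x\in X'\priv\{x_i\}$. By (F) this lifts to some $\tilde\alpha\in\EndA$. Every $b\in B$ may be written $b=t(\ob{x_i})$ for some term $t$, so $b\tilde\alpha=t(\ob{y_i})\in\clotX[\{y_i\}]=C$, and therefore $\alpha:=\tilde\alpha\lvert_B$ is a homomorphism $\AlgB\to\AlgC$.

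Running exactly the same construction with the inverse bijection $y_i\mapsto x_i$ produces a homomorphism $\beta\colon\AlgC\to\AlgB$. The compositions $\alpha\beta$ and $\beta\alpha$ then restrict to the identity on the generating sets $\{x_i\}$ and $\{y_i\}$ respectively (for instance $x_i\alpha\beta=y_i\beta=x_i$), so the standard fact that two homomorphisms agreeing on a generating set are equal yields $\alpha\beta=\id_B$ and $\beta\alpha=\id_C$, giving the desired isomorphism.

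The main temptation to resist is symmetrising too early by extending $\{y_i\}$ to a basis of $\AlgA$ and trying to match up the two basis complements; such a matching would require $\codim[A]\AlgB=\codim[A]\AlgC$, which is automatic in finite dimensions but can fail when $\dim\AlgA$ is infinite. The key observation that avoids this is that we only need $\tilde\alpha$ to take $\clotX[\{x_i\}]$ into $\clotX[\{y_i\}]$, which places no constraint at all on the behaviour of $\tilde\alpha$ on the remainder of $X'$, so a single unsymmetric application of (F) on the ambient $\AlgA$ is enough.
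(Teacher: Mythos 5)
Your proof is correct. The paper itself omits the proof of this lemma, deferring to \cite{AF04,Gould95}, and your argument is the standard one found there: biject two bases, extend each direction to an endomorphism of $\AlgA$ via (EP) and (F), restrict to get homomorphisms $\AlgB\to\AlgC$ and $\AlgC\to\AlgB$, and conclude they are mutually inverse because homomorphisms agreeing on a generating set coincide; your explicit handling of the fact that (F) only applies to bases of the ambient algebra, and your avoidance of any comparison of codimensions, are exactly the right points of care.
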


\begin{lemma}\label{isom of images gives RL related maps}
Let $\alpha,\beta\in \TAB$ be such that $\im\alpha\cong\im\beta$. Then there exist $\gamma,\mu\in\TAB$ such that: 
\begin{itemize}
    \item $\im\gamma=\im\beta$ and $\ker\gamma=\ker\alpha$; and
    \item $\im\mu=\im\alpha$ and $\ker\mu=\ker\beta$.
\end{itemize}
\end{lemma}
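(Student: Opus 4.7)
The plan is to construct $\gamma$ as a composition $\alpha\phi'$, where $\phi'$ is an endomorphism of $\AlgA$ extending an isomorphism $\phi\colon \im\alpha\to\im\beta$ coming from the hypothesis. This will automatically give $\im\gamma=\im\beta$, while the fact that $\phi'$ restricts to an injection on $\im\alpha$ will force the fibers of $\gamma$ to coincide with those of $\alpha$, so that $\ker\gamma=\ker\alpha$. The dual construction, using $\phi^{-1}$, will produce $\mu$.

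Concretely, I would first fix an isomorphism $\phi\colon\im\alpha\to\im\beta$, which exists by assumption. Using (EP), I extend a basis $\{u_i\}$ of $\im\alpha$ to a basis $\{u_i\}\sqcup\{v_k\}$ of $\AlgA$, pick any element $b\in B$ (possible since $\AlgB\nonempty$), and define a map on this basis by $u_i\mapsto u_i\phi$ and $v_k\mapsto b$. By the free basis property this map extends to an endomorphism $\phi'$ of $\AlgA$ whose image lies in $\AlgB$, so $\phi'\in\TAB$; moreover $\phi'$ must agree with $\phi$ on all of $\im\alpha$, since the two homomorphisms coincide on the basis $\{u_i\}$.

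Setting $\gamma=\alpha\phi'$, I would then verify the two required properties. The image computation is immediate: $\im\gamma=(\im\alpha)\phi'=(\im\alpha)\phi=\im\beta$. For the kernel, given $a,a'\in A$, one has $a\gamma=a'\gamma$ iff $(a\alpha)\phi'=(a'\alpha)\phi'$ iff $(a\alpha)\phi=(a'\alpha)\phi$, which by injectivity of $\phi$ on $\im\alpha$ is equivalent to $a\alpha=a'\alpha$, so $\ker\gamma=\ker\alpha$. The map $\mu$ is built symmetrically: extend $\phi^{-1}\colon\im\beta\to\im\alpha$ to an endomorphism $\psi\in\TAB$ by the same recipe and set $\mu=\beta\psi$; the verifications are identical after swapping the roles of $\alpha$ and $\beta$.

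There is no serious obstacle here: the statement is essentially a repackaging of the free basis property, which is precisely the axiom licensing such extensions of partially defined maps. The only point requiring attention is to ensure that $\phi'$ (and $\psi$) genuinely lie in $\TAB$, which is why I send the ``extra'' basis elements $v_k$ to a fixed element of $\AlgB$ rather than an arbitrary element of $\AlgA$; this uses nothing beyond the hypothesis that $\AlgB$ is non-empty.
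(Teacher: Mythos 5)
Your proof is correct and takes essentially the same route as the paper's: the paper simply sets $\gamma=\alpha\phi$ and $\mu=\beta\phi^{-1}$ directly, treating the composite of $\alpha$ with the isomorphism $\phi\colon\im\alpha\to\im\beta$ as an element of \TAB, whereas you first extend $\phi$ to a global endomorphism $\phi'\in\TAB$ before composing --- a harmless extra step, since $\alpha\phi'=\alpha\phi$ anyway. The image computation and the kernel argument via injectivity of $\phi$ are identical to the paper's.
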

\begin{proof}
Since $\im\alpha\cong\im\beta$, there exists an isomorphism $\phi\colon A\alpha\to A\beta$.
Define $\gamma, \mu\in\TAB$ by $\gamma=\alpha\phi$ and $\mu=\beta\phi^{-1}$.
Then, $\im\gamma = A\gamma = (A\alpha)\phi = A\beta = \im\beta$ and similarly $\im\mu = \im\alpha$.
Additionally, we have that $\ker\gamma\supseteq\ker\alpha$ by definition of $\gamma$ and for the reverse inclusion, we have that for any $a,b\in A$, $(a,b)\in\ker(\gamma)$ if and only if $(a\alpha,b\alpha)\in\ker\phi$. 
However, since $\phi$ is injective, this is equivalent to $a\alpha=b\alpha$, that is, $(a,b)\in \ker\alpha$ and thus $\ker\gamma\subseteq\ker\alpha$.
Therefore $\ker\gamma=\ker\alpha$ and using similar arguments we have $\ker\mu=\ker\beta$ which concludes the proof.
\qed\end{proof}

\begin{lemma}\label{rank of product of maps less than min of ranks}
Let $\alpha,\beta\in\TAB$. Then $\rho(\alpha\beta)\leq \min\set{\rho(\alpha), \rho(\beta)}$.
\end{lemma}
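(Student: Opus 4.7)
The proof plan rests on the identity $\im(\alpha\beta) = (A\alpha)\beta = (\im\alpha)\beta$, which follows from the definition of function composition, together with two standard facts about independence algebras.

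First, I would show $\rho(\alpha\beta) \leq \rho(\alpha)$. Take a basis $\{x_i\}$ of $\im\alpha$, so by definition $|\{x_i\}| = \rho(\alpha)$. Because $\beta$ is a homomorphism, it commutes with all term operations, hence for any $a \in \im\alpha$ written as $a = t(\ob{x_i})$ we have $a\beta = t(\ob{x_i\beta})$. Thus the set $\{x_i\beta\}$ generates $(\im\alpha)\beta = \im(\alpha\beta)$. Since $\AlgA$ satisfies (EP), any generating set of a subalgebra contains a basis, so $\rho(\alpha\beta) \leq |\{x_i\beta\}| \leq |\{x_i\}| = \rho(\alpha)$.

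Second, for $\rho(\alpha\beta) \leq \rho(\beta)$, I would use that $\im(\alpha\beta) = (\im\alpha)\beta \subseteq A\beta = \im\beta$ so that $\im(\alpha\beta)$ is a subalgebra of $\im\beta$. Any basis of $\im(\alpha\beta)$ is then an independent subset of $\im\beta$, and by (EP) can be extended to a basis of $\im\beta$, giving $\rho(\alpha\beta) \leq \rho(\beta)$.

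There is no real obstacle here; the only subtle point is ensuring the two inequalities cover both the finite and infinite dimensional cases uniformly, which is handled cleanly by the cardinality statements on bases and generating sets that (EP) provides. Combining the two inequalities yields $\rho(\alpha\beta) \leq \min\{\rho(\alpha), \rho(\beta)\}$ as required.
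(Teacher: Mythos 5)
Your argument is correct and matches the paper's proof in essence: the paper likewise obtains $\rho(\alpha\beta)\leq\rho(\beta)$ from the containment $A\alpha\beta\subseteq A\beta$ and $\rho(\alpha\beta)\leq\rho(\alpha)$ from $\dim((A\alpha)\beta)\leq\dim(A\alpha)$, the latter of which you simply spell out via the image of a basis being a generating set. No gap; your version is just a more detailed rendering of the same two inequalities.
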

\begin{proof}
From $A\alpha\beta \subseteq B\beta \subseteq A\beta$, we get that $\rho(\alpha\beta)\leq \rho(\beta)$. Similarly, $\dim(A\alpha\beta) \leq \dim((A\alpha)\beta)\leq \dim(A\alpha)$ and thus $\rho(\alpha\beta)\leq\rho(\alpha)$ and the claim follows from the two inequalities.
\qed\end{proof}

\section{Regular elements of $\TAB$}\label{section on Q}
It is well-known that for any independence algebra $\AlgA$, its endomorphism monoid $\EndA$ is regular \cite[Prop. 4.7]{Gould95}.
Thus in the case where $\AlgB=\AlgA$, it follows that $\TAB$ is regular. 
On the other hand, it is easy to see that if we take $\AlgB$ to be the constant subalgebra $\Csts$ whenever this is non-empty, we have that $\TAB$ is a left-zero semigroup. 
Indeed, since any homomorphism has to be act as the identity on $\Csts$, then for any $\alpha,\beta\in\TAB$ we have that $\im\alpha\subseteq \Csts$ and thus $\alpha\beta = \alpha$.
Besides, when $\Csts=\emptyset$ and $\dim\AlgB=1$ we have that $\TAB$ is isomorphic to $G\wr_{_{n}}\!\set{c_1}$, where $G$ is the group of unary term operations which do not have constant image, $c_1\colon\set{1,\dots,n}\to \set{1,\dots,n}$ is the constant map with value $1$ and the multiplication is given by
\begin{align*}\left((g_1,\dots,g_n),c_1\right)\left((h_1,\dots,h_n),c_1\right) &= \left((g_1h_{1 c_1},\dots,g_nh_{n c_1}),c_1\right)\\ &= \left((g_1h_1,\dots,g_nh_1),c_1\right).\end{align*}
In both these cases, $\TAB$ is a regular semigroup but we will show that $\TAB$ is not regular when $\AlgB$ does not satisfy the conditions given above.

To see that, first notice that if $\alpha\in\TAB$ is any regular element, then there exists $\gamma\in\TAB$ such that $\alpha=\alpha\gamma\alpha$ and from this, we obtain that 
$$ A\alpha=(A\alpha\gamma)\alpha \subseteq B\alpha.$$
We therefore define a set $Q$ which will contain all regular elements as follows:
$$ Q = \set{\alpha\in\TAB\mid A\alpha\subseteq B\alpha}.$$
It is clear that the condition $A\alpha\subseteq B\alpha$ on the elements of $Q$ can be rewritten as $A\alpha = B\alpha$ or $(A\priv B)\alpha\subseteq B\alpha$, and these equivalent conditions will be equally used throughout the paper to define an element of $Q$.

\begin{lemma}
The set $Q$ is a right ideal and the maximal regular subsemigroup of $\TAB$ with respect to containment.
\end{lemma}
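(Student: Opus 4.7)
My plan is to verify the three assertions separately: $Q$ is a right ideal; $Q$ is a regular subsemigroup; and every regular subsemigroup of $\TAB$ is contained in $Q$.

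\textbf{Right ideal.} Given $\alpha\in Q$ and $\beta\in\TAB$, I would apply $\beta$ to the inclusion $A\alpha\subseteq B\alpha$ to obtain $A\alpha\beta\subseteq B\alpha\beta$. Since $\im(\alpha\beta)\subseteq\im\beta\subseteq B$, we have $\alpha\beta\in\TAB$, hence $\alpha\beta\in Q$. As a right ideal, $Q$ is in particular a subsemigroup.

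\textbf{Regularity.} This is the main work. Given $\alpha\in Q$, so that $A\alpha=B\alpha$, I would pick a preimage basis of $\alpha$ inside $B$: choose a basis $\{x_i\alpha\}$ of $\im\alpha$ and lift it to an independent set $\{x_i\}\subseteq B$ with $x_i\alpha$ the prescribed basis elements (using the standard fact that a preimage of an independent set is independent together with the free basis property to select representatives). Since $\{x_i\alpha\}\subseteq B$ is independent, extend it successively to a basis $\{x_i\alpha\}\sqcup\{y_j\}$ of $B$ and then to a basis $\{x_i\alpha\}\sqcup\{y_j\}\sqcup\{z_k\}$ of $A$. Fix any element $b_0\in B$ and use the free basis property to define $\gamma\in\TAB$ by
\[
\gamma=\pmap{x_i\alpha & y_j & z_k \\ x_i & b_0 & b_0}.
\]
Since its image lies in $B$, we have $\gamma\in\TAB$. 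To see $\alpha\gamma\alpha=\alpha$, note that for any $a\in A$ there is a term $t$ with $a\alpha=t(\overline{x_i\alpha})$; then $a\alpha\gamma=t(\overline{x_i})$ and therefore $a\alpha\gamma\alpha=t(\overline{x_i\alpha})=a\alpha$. To see $\gamma\in Q$, observe that both $A\gamma$ and $B\gamma$ are generated by $\{x_i\}\cup\{b_0\}$, so $A\gamma=B\gamma$. Hence $\alpha$ is regular in $Q$, witnessed by an element of $Q$.

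\textbf{Maximality.} Suppose $S$ is any regular subsemigroup of $\TAB$ and let $\alpha\in S$. Pick $\beta\in S\subseteq\TAB$ with $\alpha\beta\alpha=\alpha$. Since $\alpha\beta\in\TAB$ we have $A\alpha\beta\subseteq B$, and applying $\alpha$ gives $A\alpha=A\alpha\beta\alpha\subseteq B\alpha$, so $\alpha\in Q$. Thus $S\subseteq Q$, which together with the regularity of $Q$ established above identifies $Q$ as the maximal regular subsemigroup under containment.

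I expect the only nontrivial point to be the construction of $\gamma$: specifically, producing a preimage basis inside $B$ (which is exactly where the hypothesis $A\alpha=B\alpha$ defining $Q$ is essential) and then checking that $\gamma$ itself lies in $Q$ rather than merely in $\TAB$. Everything else is a direct inclusion chase.
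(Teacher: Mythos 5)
Your overall route is the same as the paper's: the right-ideal computation, the maximality argument via $A\alpha=(A\alpha\beta)\alpha\subseteq B\alpha$, and the construction of an inverse $\gamma$ sending the image of a preimage basis back to that preimage basis all match the paper's proof step for step. There is, however, one step that fails as written: the claim that $\gamma\in Q$ because ``both $A\gamma$ and $B\gamma$ are generated by $\set{x_i}\cup\set{b_0}$''. This is true of $A\gamma$, but $B\gamma$ is generated by $\set{x_i}\cup\set{b_0}$ only when $J\neq\emptyset$; if $\set{x_i\alpha}$ is already a basis of $B$ (which happens exactly when $\im\alpha=B$, and is compatible with $\clotX[\set{x_i}]\subsetneq B$ once $\dim B$ is infinite), then $B\gamma=\clotX[\set{x_i}]$ while $A\gamma=\clotX[\set{x_i}\cup\set{b_0}]$, and an arbitrary $b_0\in B$ need not lie in $\clotX[\set{x_i}]$. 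Concretely, take $\AlgA$ to be the set $\set{e_0,e_1,e_2,\dots}$, $B=\set{e_1,e_2,\dots}$, and $\alpha$ given by $e_0\alpha=e_1\alpha=e_1$ and $e_i\alpha=e_{i-1}$ for $i\geq 2$. Then $\alpha\in Q$ with preimage basis $\set{x_i}=\set{e_2,e_3,\dots}$ (so $x_i\alpha=e_i$ and $\set{x_i\alpha}=B$), $J=\emptyset$, $\set{z_k}=\set{e_0}$, and the choice $b_0=e_1$ gives $B\gamma=\set{e_2,e_3,\dots}\neq\set{e_1,e_2,\dots}=A\gamma$, so $\gamma\notin Q$ even though $\alpha\gamma\alpha=\alpha$ still holds.

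The gap is local and easily repaired: choose $b_0$ inside $\clotX[\set{x_i}]$, which is non-empty because either $I\neq\emptyset$ or $\rho(\alpha)=0$, in which case $\AlgA$ has constants and $\Csts\subseteq\clotX[\set{x_i}]$. This is precisely what the paper does by sending all the extra basis elements to an element $c\in\clotX[\set{b_i}]$, whence $A\gamma=\clotX[\set{x_i}]=B\gamma$ in every case. Alternatively, keep your $\gamma$ and replace it by $\gamma\alpha\gamma$, which still satisfies $\alpha(\gamma\alpha\gamma)\alpha=\alpha$ and is itself regular in $\TAB$, hence lies in $Q$ by your own maximality computation. Everything else in the proposal is correct and in line with the paper.
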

\begin{proof}
As per the remark above, any regular element of $\TAB$ lies in $Q$. 
Moreover, for any $\alpha\in Q, \beta\in \TAB$ we have that 
$$A(\alpha\beta) = (A\alpha)\beta\subseteq (B\alpha)\beta = B\alpha\beta,$$
and thus $Q$ is a right ideal.

Let us now show that any $\alpha\in Q$ is a regular element. 
Let $B\alpha = \clotX[\set{b_i\alpha}]$, where, according to our convention, $\set{b_i}\subseteq B$ is a preimage basis, and take an element $c\in\clotX[\set{b_i}]$, which exists as $\AlgB$ is non-empty.
Since $A\alpha\subseteq B\alpha$, then for any $a\in A$ there exists a term $t$ such that $a\alpha = t(\ob{b_i\alpha})=t(\ob{b_i})\alpha$. 
Now if $A=\clotX[\set{b_i}\sqcup\set{x_j}]$ and we define $u_j = x_j\alpha = t_j(\ob{b_i\alpha})$, then we have that $$\alpha = \pmap{ b_i & x_j \\ b_i\alpha & u_j}.$$ 
Additionally, let $\set{a_j}$ be such that $A=\clotX[\set{b_i\alpha}\sqcup\set{a_j}]$, and define $\gamma\in \TAB$ by:
$$\gamma = \pmap{ b_i\alpha & a_j \\ b_i & c }.$$
Then we have that $\gamma\in Q$ since $A\gamma = \clotX[b_i] = B\gamma$.
Finally, we can see that $\alpha=\alpha\gamma\alpha$. 
Indeed, this is clear for any $b\in \clotX[b_i]$, but also for $a\in A$ we have 
$$a\alpha\gamma\alpha = t(\ob{b_i\alpha})\gamma\alpha= t(\ob{(b_i\alpha)\gamma})\alpha = t(\ob{b_i})\alpha = a\alpha,$$ 
which finishes the proof that any element in $Q$ is regular.
\qed\end{proof}

For ease of use, the set of non-regular maps, that is, the set $\TAB\priv Q$ will be denoted by $\Qc$.

\begin{corollary}\label{cor tab not reg in general}
For $\AlgB\neq\AlgA$, the semigroup $\TAB$ is not regular if and only if $\dim B\geq 2$ or $\Csts\nonempty$ and $\dim B=1$.
\end{corollary}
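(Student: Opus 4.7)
The text immediately preceding the corollary already establishes the converse direction: when $\dim B = 0$ (forcing $\AlgB = \Csts$ with $\Csts\neq\emptyset$), the semigroup $\TAB$ is a left-zero semigroup, and when $\Csts=\emptyset$ together with $\dim B = 1$, $\TAB$ is the wreath-like product exhibited there; both are regular. The plan is therefore to prove the forward direction by exhibiting a non-regular element of $\TAB$ under each of the two remaining hypotheses, namely (a) $\dim B\geq 2$ and (b) $\Csts\neq\emptyset$ with $\dim B=1$. By the preceding lemma, every regular element of $\TAB$ lies in $Q$, so it suffices in each case to construct some $\alpha\in\TAB$ with $A\alpha \not\subseteq B\alpha$.

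The construction follows the same template in both cases. Using (EP), I would extend a basis $Y$ of $\AlgB$ to a basis $X = Y \sqcup Z$ of $\AlgA$, where $Z\neq\emptyset$ because $\AlgB\subsetneq\AlgA$. Then (F) allows me to define $\alpha\in\TAB$ by specifying its values on $X$, chosen so that $B\alpha = \clotX[Y\alpha]$ is strictly smaller than $A\alpha$, the excess coming from the image of a distinguished $z_0\in Z$.

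For case (a), pick independent $b_1, b_2 \in B$ and include them in $Y$, fix $z_0 \in Z$, and send $z_0$ to $b_2$ and every other basis element (including $b_1$ and $b_2$ themselves) to $b_1$. Then $B\alpha = \clotX[b_1]$, while $b_2 = z_0\alpha \in A\alpha\priv\clotX[b_1]$ by the independence of $\set{b_1, b_2}$. For case (b), write $\AlgB = \clotX[\set{b_0}]$ and fix any $c \in \Csts \subseteq B$ (noting $c\neq b_0$, since $b_0$ lying in an independent set cannot be a constant), pick $z_0\in Z$, and define $\alpha$ by $b_0\mapsto c$, $z_0\mapsto b_0$, and $z\mapsto c$ for every other $z\in Z$. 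Then $B\alpha = \clotX[c] = \Csts$, whereas $b_0 = z_0\alpha \in A\alpha$; since $b_0\notin\Csts$, again $\alpha\notin Q$.

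The main point to check is the bookkeeping around constants in case (b): one must verify that the declared assignment on $X$ extends to a well-defined endomorphism of $\AlgA$, which follows from (F) once one observes that constants are automatically preserved by any homomorphism so no extra consistency conditions are imposed, and that $\clotX[c] = \Csts$ is properly contained in $B$, which is immediate from $\dim\Csts = 0 < 1 = \dim B$. Everything else is routine.
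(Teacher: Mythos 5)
Your proposal is correct and follows essentially the same route as the paper: the converse is delegated to the remarks opening Section 3, and the forward direction is handled by constructing a map whose restriction to $B$ collapses into $\clotX[b]$ for a single element $b$ while some basis element outside $B$ is sent to an element not in $\clotX[b]$, so that $B\alpha\subsetneq A\alpha$. The only cosmetic difference is that the paper unifies your two cases by choosing $b_1,b_2\in B$ with $b_1\notin\clotX[b_2]$ (two independent elements, or an independent element and a constant), whereas you treat them separately.
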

\begin{proof}
If $\dim B\geq 2$ or $\dim B=1$ and $\Csts\nonempty$, there exists at least two elements $b_1,b_2\in B$ such that $b_1\notin \clotX[b_2]$ since we can either take two independent elements, or take $b_1$ to be an independent element and $b_2$ a constant.
Now let $B=\clotX[\set{x_j}]$, $A=\clotX[\set{y_i}\sqcup\set{x_j}]$ and define $\alpha\in\TAB$ by
$$ \alpha=\pmap{y_i & x_j \\ b_1 & b_2}.$$
Then we have that $B\alpha=\clotX[b_2]\subsetneq \clotX[b_1,b_2] = A\alpha$ and thus $\alpha$ is not in $Q$.

Additionally, the comments at the beginning of Section \ref{section on Q} give us the contrapositive statement, hence the equivalence.
\qed\end{proof}

Following Corollary \ref{cor tab not reg in general}, we will restrict ourselves to the cases where $\TAB$ is not regular, since if $\TAB$ is regular, then we automatically obtain Green's relations and the ideal structure from that of $\EndA$.
Therefore, from now on we assume that $\AlgB\neq\AlgA$ and that $\dim B\geq 2$ or $\Csts\neq\emptyset$ and $\dim B=1$.
As a consequence, a similar argument to the one mentioned in the proof will be often used, namely, the fact that there exist two distinct elements $b_1$ and $b_2$ in $B$ such that the former does not lie in the subalgebra generated by the latter.
This will allow us to treat in a similar way algebras of a given rank that contain constants with algebras of a rank one higher that do not have them, as long as we define our maps to be the identity on the element $b_2$ whenever this is relevant.

\begin{remark} \label{remark structure Q}
A few notes on the structure of $Q$ worth mentioning are the following:
\begin{itemize}
    \item \emph{$Q$ is always nonempty:} let $B=\clotX[\set{b_i}]$ and $A=\clotX[\set{b_i}\sqcup\set{a_j}]$, then the map $\alpha=\pmap{b_i & a_j \\ b_i & b_1}$ is an idempotent and is clearly in $Q$. 
    \item \emph{$Q$ is not a left ideal:} let $b_1, b_2\in B$ such that $b_1\notin\clotX[b_2]$ and define the algebras $C=\clotX[b_1,b_2]$, $B=C\sqcup\clotX[\set{y_j}]$, and $A=B\sqcup\clotX[\set{x_i}]$ together with the following maps:
$$ \alpha = \pmap{x_i & y_j & b_1 & b_2 \\ b_1 & b_2 & b_2 & b_2} \quad \text{and} \quad \beta = \pmap{x_i & y_j & b_1 & b_2 \\ b_1 & b_1 & b_1 & b_2}. $$
Then it follows that $\alpha\notin Q$ and $\beta\in Q$ are such that $\alpha\beta\notin Q$.
    \item \emph{Any map in $Q$ has a preimage basis in $B$:} for $\alpha\in Q$ we have that $\im\alpha=A\alpha=B\alpha$, so there exist $\set{b_i}\subseteq B$ such that $\im\alpha=\clotX[\set{b_i\alpha}]$, and then $\set{b_i}$ is a preimage basis for $\alpha$. 
    From now on, this fact will be used without explicit mention.
\end{itemize}
\end{remark}

Since regular maps will be of utmost importance in the description of Green's relations in $\TAB$, some lemmas are given here for later use.
The first one gives us a sufficient and necessary condition on when the product of two maps lies in $Q$.

\begin{lemma}\label{condition product to be regular}
For $\alpha, \beta\in \TAB$, $\alpha\beta\in Q$ if and only if for all $x\in A$, there exists a $y\in B$ such that $(x\alpha,y\alpha)\in \ker\beta\cap (B\times B)$.
\end{lemma}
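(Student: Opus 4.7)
The plan is to unpack both sides of the equivalence directly from the definitions and observe that they are essentially two ways of saying the same thing, with the intersection with $B\times B$ being automatic from the hypothesis $\alpha\in\TAB$.

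For the forward direction, I would assume $\alpha\beta\in Q$, so $A\alpha\beta\subseteq B\alpha\beta$ by definition. Given any $x\in A$, the element $x\alpha\beta$ belongs to $A\alpha\beta$ and therefore to $B\alpha\beta$, which yields some $y\in B$ with $x\alpha\beta=y\alpha\beta$. Rewriting this as $(x\alpha)\beta=(y\alpha)\beta$ gives $(x\alpha,y\alpha)\in\ker\beta$. Since $\alpha\in\TAB$ we have $\im\alpha\subseteq B$, so $x\alpha\in B$ and $y\alpha\in B$, placing the pair inside $\ker\beta\cap(B\times B)$, as required.

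For the converse, I would assume the kernel condition holds and show $A\alpha\beta\subseteq B\alpha\beta$. Any element of $A\alpha\beta$ is of the form $x\alpha\beta$ for some $x\in A$; the hypothesis provides $y\in B$ with $(x\alpha,y\alpha)\in\ker\beta$, whence $x\alpha\beta=y\alpha\beta\in B\alpha\beta$. Since this element was arbitrary, $A\alpha\beta\subseteq B\alpha\beta$, i.e.\ $\alpha\beta\in Q$.

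There is no real obstacle here: the statement is a pointwise reformulation of the containment $A\alpha\beta\subseteq B\alpha\beta$ defining membership in $Q$, and the only mild subtlety is noticing that the clause ``$\cap\,(B\times B)$'' is forced by $\alpha\in\TAB$ and so does not require separate verification.
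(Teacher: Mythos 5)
Your proof is correct and takes essentially the same approach as the paper: both directions amount to reading the kernel condition as a pointwise reformulation of the containment $A\alpha\beta\subseteq B\alpha\beta$ defining membership in $Q$, and your converse is verbatim the paper's. The only difference is in the forward direction, where the paper detours through a preimage basis of $\alpha\beta$ contained in $B$ and expresses $a\alpha\beta$ as a term $t(\ob{b_i})\alpha\beta$ to produce the witness $y=t(\ob{b_i})$, whereas you extract $y$ directly from $x\alpha\beta\in B\alpha\beta$; this is a harmless and slightly more economical route, and your observation that the clause $\cap\,(B\times B)$ is automatic from $\im\alpha\subseteq B$ is exactly right.
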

\begin{proof}
From the remarks above, if $\alpha\beta\in Q$, then there exists a preimage basis of $\alpha\beta$ in $B$, say $\set{b_i}$. 
Therefore, for any $a\in A$, we have that $a\alpha\beta \in \clotX[\set{b_i\alpha\beta}]$ and thus there exists a term $t$ such that $a\alpha\beta = t(\ob{b_i\alpha\beta}) = t(\ob{b_i})\alpha\beta$.
Since $t(\ob{b_i})\in B$ and $\left(a\alpha, t(\ob{b_i})\alpha\right)\in \ker\beta\cap (B\times B)$, the first direction is proved.

For the converse, we have that $A\alpha\beta = \set{x\alpha \mid x \in A}\beta$. 
However, each element of the set $\set{x\alpha \mid x\in A}$ lies in $B$ and is $\ker\beta$-related to some element $y\alpha$ in $B\alpha$ by assumption.
Thus $A\alpha\beta = \set{y\alpha\mid y\in B}\beta = B\alpha\beta$ which gives us $\alpha\beta\in Q$.
\qed\end{proof}

The next lemma shows that we can always create a map in $Q$ from a non-regular one with the same image, and that the converse is also true given that the rank is not minimal.
For this, let us denote by $e$ the minimal rank of a subalgebra of $\AlgA$, that is, $e=0$ if $\AlgA$ has constants, and $e=1$ otherwise. 
As a direct consequence, $e$ is also the minimal rank of any map in $\TAB$.

\begin{lemma}\label{Reg and non-reg maps with same image}
For any map $\alpha\in \Qc$, there exists a map $\alpha'\in Q$ such that $\im\alpha'=\im\alpha$.
Similarly, for any map $\beta\in Q$ such that $\rho(\beta)>e$, there exists $\beta'\in \Qc$ such that $\im\beta'=\im\beta$.
\end{lemma}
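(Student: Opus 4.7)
The plan is to construct $\alpha'$ and $\beta'$ explicitly by prescribing their action on a basis of $A$ chosen compatibly with $\im\alpha$ (respectively $\im\beta$) and with $B$, then invoking the free basis property to extend to endomorphisms landing in $\AlgB$.

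For the first assertion, pick a basis $\set{c_k}$ of $\im\alpha$; since $\im\alpha\subseteq B$ the $c_k$'s lie in $B$, so extend through $\set{y_j}$ to a basis of $B$ and through $\set{x_i}$ to a basis of $A$. Fixing any element $c\in\im\alpha$ (which is nonempty), the map $\alpha'$ defined by $c_k\mapsto c_k$ and $y_j, x_i\mapsto c$ satisfies $B\alpha'\supseteq\ClotX[\set{c_k}]=\im\alpha$ and $A\alpha'\subseteq\im\alpha$, hence $A\alpha'=B\alpha'=\im\alpha$ and $\alpha'\in Q$ with $\im\alpha'=\im\alpha$.

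For the second assertion, the idea is to push $B\beta'$ strictly inside $\im\beta$ by routing the basis of $B$ through a proper subalgebra $S$ of $\im\beta$, while still hitting all of $\im\beta$ using a basis element of $A$ outside $B$. Pick a basis $\set{c_k}$ of $\im\beta$, extend it through $\set{y_j}$ to a basis of $B$ and then through $\set{x_i}\nonempty$ to a basis of $A$. The hypothesis $\rho(\beta)>e$ is exactly what allows such an $S$: if $\rho(\beta)\geq 2$ take $S=\ClotX[\set{c_k\mid k\neq 1}]$, which is proper in $\im\beta$ since $c_1\notin S$ by independence; if $\rho(\beta)=1$, which forces $e=0$, take instead $S=\Csts$, which is proper in $\ClotX[c_1]$ because $c_1$ is independent. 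In either case $\im\beta=\ClotX[S\cup\set{c_1}]$. Fixing any $s\in S$, define $\beta'$ by $c_k\mapsto c_k$ for $k\neq 1$, $c_1\mapsto s$, $y_j\mapsto s$, a distinguished $x_1\mapsto c_1$, and the remaining $x_i\mapsto s$. Then $B\beta'\subseteq S\subsetneq\im\beta=A\beta'$, so $\beta'\in\Qc$ with $\im\beta'=\im\beta$.

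I expect the only subtle point to be the rank-one case of the second claim, where the natural choice $\ClotX[\set{c_k\mid k\neq 1}]$ collapses and $\Csts$ must be used instead. This is precisely the uniformisation trick flagged in the paper after Corollary~\ref{cor tab not reg in general}, which lets algebras of rank one with constants be handled analogously to algebras of rank two without.
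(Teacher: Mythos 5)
Your proof is correct and follows essentially the same route as the paper's: both parts are explicit constructions on a basis of $A$ chosen to contain a basis of $\im\alpha$ (resp.\ $\im\beta$) and extended via the free basis property, and your case split at $\rho(\beta)=1$, $e=0$ is exactly the paper's device of choosing $b_1\beta\notin\clotX[b_2\beta]$ with $b_2\beta$ a constant. The only cosmetic difference is in the first part, where the paper re-routes a preimage basis through $B$ while preserving the values of $\alpha$, whereas you simply build an idempotent-like retraction onto $\im\alpha$; both yield a regular map with the required image.
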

\begin{proof}
Let us assume first that $\alpha\in \Qc$.
Then $B\alpha=\clotX[\set{b_i\alpha}]$ (with $I$ possibly empty) and we extend this to a basis $\set{b_i\alpha}\sqcup\set{a_j\alpha}$ of $\im\alpha$. 
Since $\dim B\geq \Card{I} + \Card{J}$, we can write $B=\clotX[\set{c_i}\sqcup\set{x_j}\sqcup\set{y_k}]$ where the set $K$ is possibly empty.
Finally letting $A = \clotX[\set{z_s}\sqcup\set{c_i}\sqcup\set{x_j}\sqcup\set{y_k}]$ and defining $\alpha'$ as the following:
$$ \alpha' = \pmap{z_s & c_i & x_j & y_k \\ z_s\alpha & b_i\alpha & a_j\alpha & y_k\alpha},$$
we have that 
$$A\alpha'=\im\alpha=\clotX[\set{b_i\alpha}\sqcup\set{a_j\alpha}] = \clotX[\set{c_i}\sqcup\set{x_j}]\alpha'\subseteq B\alpha',$$
and thus $\alpha'\in Q$ with the same image as $\alpha$.

Now assume that $\beta\in Q$ is such that $\rho(\beta)>e$. 
Then, there exist two elements $b_1, b_2\in B$ such that $b_1\beta\notin \clotX[b_2\beta]$ together with a set $\set{x_j}\subseteq B$ such that $A\beta = B\beta = \clotX[b_1\beta, b_2\beta]\sqcup\clotX[\set{x_j\beta}]$, and we write $B=\clotX[b_1,b_2]\sqcup\clotX[\set{x_j}]$, $A=\clotX[\set{a_i}]\sqcup B$ and define 
$$\beta'=\pmap{a_i & \set{b_1, b_2} & x_j \\ b_1\beta & b_2\beta & x_j\beta}.$$
Then $A\beta'=\clotX[b_1\beta,b_2\beta]\sqcup\clotX[\set{x_j\beta}]$ and $B\beta'=\clotX[b_2\beta]\sqcup\clotX[\set{x_j\beta}]$.
By the exchange property, we have that $b_1\beta \notin B\beta'$ which gives us that $A\beta'\neq B\beta'$, that is $\beta'\notin Q$, and also $\im\beta'=\im\beta$ as required.
\qed\end{proof}

When we consider a non-regular map $\alpha$, even though we do not have that $A\alpha = B\alpha$, nevertheless we can have that $A\alpha\cong B\alpha$, which happens precisely when $A\alpha$ and $B\alpha$ have the same dimension. 
As an example, consider $A = \clotX[\set{x_i}]$, $B=\clotX[\set{x_i}\priv\set{x_1}]$ where $I=\N$, and define $\alpha\in\TAB$ by $x_k\alpha = x_{k+1}$. 
Then clearly $x_2\in A\alpha\priv B\alpha$, so that $\alpha\notin Q$ but we still have $\rho(\alpha)=\dim B=\dim(B\alpha)$.
This peculiar behaviour of non-regular maps can however only happen if $\alpha$ has infinite rank. This is made clear in the following lemma.

\begin{lemma}\label{lem rank of restriction wrt finiteness}
Let $\alpha\in\TAB$ be such that $A\alpha\cong B\alpha$. Then either $\alpha\in Q$, or $\rho(\alpha)\geq \aleph_0$.
\end{lemma}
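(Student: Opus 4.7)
The plan is to argue by contrapositive: assuming $A\alpha \cong B\alpha$ and $\rho(\alpha) < \aleph_0$, I will show that $A\alpha = B\alpha$, so that $\alpha \in Q$. The key observation is that $B\alpha$ is always a subalgebra of $A\alpha$ (because $B \subseteq A$), and that for finite-dimensional subalgebras, containment plus equality of dimensions forces equality of the subalgebras.

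More concretely, first I would pick a basis $X$ of $B\alpha$, which has cardinality $\dim(B\alpha) = \dim(A\alpha) = \rho(\alpha)$ since $A\alpha \cong B\alpha$ and isomorphic algebras share the same dimension. Since $X \subseteq B\alpha \subseteq A\alpha$ and $X$ is an independent set in $B\alpha$, it remains independent when viewed in the larger subalgebra $A\alpha$ (independence is inherited from any containing algebra satisfying (EP) by the preliminaries). By the exchange property applied in the independence algebra $A\alpha$, the independent set $X$ extends to a basis $X \sqcup Y$ of $A\alpha$.

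Now invoke finiteness: all bases of $A\alpha$ have cardinality $\rho(\alpha)$, and this is finite by hypothesis. Since $|X \sqcup Y| = \rho(\alpha) = |X|$ with $|X|$ finite, we must have $Y = \emptyset$. Therefore $X$ itself is already a basis of $A\alpha$, giving $A\alpha = \clotX[X] = B\alpha$, and so $\alpha \in Q$ as claimed.

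The argument is short and the only possible subtlety is the cardinal arithmetic step: for infinite $|X|$, $|X \sqcup Y| = |X|$ does not force $Y$ to be empty, which is exactly why the lemma's conclusion fails without the finite-rank hypothesis (as the example preceding the lemma, with $x_k\alpha = x_{k+1}$ on a countably-generated algebra, illustrates). No obstacle arises beyond observing that this cardinal identification breaks down precisely at $\aleph_0$, which is why the threshold in the lemma statement is $\aleph_0$.
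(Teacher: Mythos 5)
Your proof is correct and is essentially the contrapositive of the paper's own argument: both extend a basis of $B\alpha$ to a basis of $A\alpha$ and compare cardinalities, with the paper assuming $B\alpha\subsetneq A\alpha$ to force infinite rank while you assume finite rank to force $Y=\emptyset$. No substantive difference.
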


\begin{proof}
Assume that $A\alpha\cong B\alpha$. 
If $A\alpha = B\alpha$ then $\alpha\in Q$.
Otherwise, we have that  $\rho(\alpha\lvert_B) = \rho(\alpha)$ by Lemma \ref{subalgebras of same rank are isomorphic} and we let $B\alpha = \clotX[\set{b_k\alpha}]$.
Since $B\alpha\subsetneq A\alpha$, there exist elements $\set{a_j\alpha}\in\im\alpha\priv B\alpha$ and we can write $A\alpha = \clotX[\set{b_k\alpha}\sqcup\set{a_j\alpha}]$ with $\Card{J}\geq 1$. 
But then 
$$\Card{K} = \rho(\alpha\lvert_B) = \rho(\alpha) = \Card{K} + \Card{J},$$
which can only happen if $\Card{K}=\rho(\alpha\lvert_B)=\rho(\alpha)$ is infinite.
\qed\end{proof}

As an immediate consequence, the contrapositive statement tells us that a non-regular map $\alpha$ with finite rank necessarily satisfy that $\rho(\alpha\lvert_B) = \dim(B\alpha) < \dim(A\alpha) = \rho(\alpha)$.

The last lemma for this section concerns the equivalence between a map being regular and the existence of a map acting as a left identity on it.

\begin{lemma}\label{Regular maps factorised by idp}
Let $\alpha\in \TAB$. Then the following are equivalent:
\begin{enumerate}
    \item $\alpha$ is regular;
    \item $\alpha=\eta\alpha$ for some $\eta$ idempotent in $\TAB$ with $\rho(\eta) = \rho(\alpha)$; and
    \item $\alpha=\gamma\alpha$ for some $\gamma\in\TAB$.
\end{enumerate}
\end{lemma}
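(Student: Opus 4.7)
The plan is to prove the implications $(2)\Rightarrow(3)\Rightarrow(1)\Rightarrow(2)$ and I expect $(1)\Rightarrow(2)$ to be the only nontrivial step.

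The implication $(2)\Rightarrow(3)$ is immediate since an idempotent is in particular an element of $\TAB$. For $(3)\Rightarrow(1)$, I would observe that if $\alpha=\gamma\alpha$ with $\gamma\in\TAB$, then $\im\gamma\subseteq B$ by definition of $\TAB$, so
$$A\alpha = A\gamma\alpha \subseteq (\im\gamma)\alpha \subseteq B\alpha,$$
which places $\alpha$ in $Q$ and thus makes it regular by the preceding lemma characterising $Q$ as the set of regular elements.

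The main content is $(1)\Rightarrow(2)$, where I would construct the desired idempotent using the free basis property, imitating the construction used in the proof that $Q$ consists of regular elements. Since $\alpha\in Q$, there exists a preimage basis $\{b_i\}\subseteq B$ with $\im\alpha = \clotX[\{b_i\alpha\}]$ (using Remark \ref{remark structure Q}). Extend $\{b_i\}$ to a basis $\{b_i\}\sqcup\{x_j\}$ of $A$. Because $A\alpha = B\alpha = \clotX[\{b_i\alpha\}]$, for each $j\in J$ there is a term $t_j$ with $x_j\alpha = t_j(\ob{b_i\alpha}) = t_j(\ob{b_i})\alpha$. By the free basis property, define
$$\eta = \pmap{b_i & x_j \\ b_i & t_j(\ob{b_i})}.$$
Then $\im\eta = \clotX[\{b_i\}]\subseteq B$, so $\eta\in\TAB$, and $\rho(\eta)=|I|=\rho(\alpha)$ since $\{b_i\}$ is independent (as $\{b_i\alpha\}$ is) and of cardinality $\rho(\alpha)$.

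It remains to check idempotency and the equation $\eta\alpha=\alpha$, both of which reduce to a computation on basis elements. For the $b_i$'s, $b_i\eta\alpha=b_i\alpha$ and $b_i\eta^2=b_i\eta=b_i$. For the $x_j$'s, $x_j\eta\alpha = t_j(\ob{b_i})\alpha = t_j(\ob{b_i\alpha}) = x_j\alpha$, and $x_j\eta^2 = t_j(\ob{b_i})\eta = t_j(\ob{b_i\eta}) = t_j(\ob{b_i}) = x_j\eta$, so $\eta^2=\eta$ and $\eta\alpha=\alpha$. The only subtlety is the degenerate case $I=\emptyset$ (which requires $\Csts\neq\emptyset$ and forces $\rho(\alpha)=0$), but the same construction still applies since each $x_j\alpha$ is then a constant $t_j()$ and $\eta$ becomes the corresponding constant endomorphism, which is automatically idempotent of rank $0$.
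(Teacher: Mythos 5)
Your proposal is correct, and the implications $(2)\Rightarrow(3)$ and $(3)\Rightarrow(1)$ coincide with the paper's argument. The difference lies in $(1)\Rightarrow(2)$: the paper simply writes $\alpha=\alpha\beta\alpha$ for some $\beta\in\TAB$ and takes $\eta=\alpha\beta$, which is visibly idempotent and a left identity for $\alpha$; the rank condition then comes for free from $\rho(\alpha)=\rho(\eta\alpha)\leq\rho(\eta)=\rho(\alpha\beta)\leq\rho(\alpha)$ (the paper leaves this last point implicit). You instead build $\eta$ explicitly from a preimage basis $\set{b_i}\subseteq B$ of $\alpha$, using the free basis property, so that $\eta$ acts as the identity on $\clotX[\set{b_i}]$ and sends the remaining basis elements $x_j$ to $t_j(\ob{b_i})$. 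Both routes are sound. The paper's is shorter and needs no case analysis, but produces an idempotent only as an abstract product $\alpha\beta$; yours costs a few more lines (and the check of the degenerate case $I=\emptyset$, which you handle correctly) but yields a concrete idempotent whose image $\clotX[\set{b_i}]$ lies inside $B$ and which restricts to the identity on a preimage basis --- a description in the same spirit as the explicit inverse constructed in the lemma showing every element of $Q$ is regular, and one that could be reused later if an idempotent with prescribed image were needed.
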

\begin{proof}
\textit{$1\Rightarrow 2$:} Since $\alpha$ is regular, then $\alpha=\alpha\beta\alpha$ for some $\beta\in\TAB$ and taking $\eta=\alpha\beta$ gives the desired result. \\
\textit{$2\Rightarrow 3$:} Putting $\gamma=\eta$ gives the result immediately. \\
\textit{$3\Rightarrow 1$:} If $\alpha = \gamma\alpha$, then $A\alpha = (A\gamma)\alpha \subseteq B\alpha$ and thus $\alpha\in Q$.
\qed\end{proof}

\section{Green's relations}\label{section on basic Greens}

In order to better understand the structure of $\TAB$, as for any semigroup, we start by looking at Green's relations.
There are three relevant semigroups in question here: $\EndA$, $\TAB$ and $Q$.
To avoid confusion, where the relation is on $\EndA$ or $Q$, we use a subscript (we consider $\TAB$ as our base case and so do not use a subscript here).
For example, $\GRE$, $\GR$ and $\GRQ$ denote Green's relation $\GR$ on $\EndA$, $\TAB$ and $Q$ respectively.
On the monoid $\EndA$, the Green's relations have been studied extensively (see for example \cite{Gould95}) where we have that $\alpha\GRE\beta$ if and only if $\ker\alpha = \ker\beta$, $\alpha\GLE\beta$ if and only if $\im\alpha=\im\beta$, and $\alpha\GDE\beta$ if and only if $\rho(\alpha)=\rho(\beta)$.
It is well known that a regular subsemigroup inherits Green's relations $\GR$ and $\GL$ from the larger semigroup and since $Q$ is a regular subsemigroup of $\EndA$, we get that $\GRQ=\GRE$, $\GLQ=\GLE$ and thus also $\GDQ = \GDE$.
The description of the Green's relations in the semigroup $\TAB$ is however slightly different.
This section will be devoted to proving adapted versions of the statements Sullivan, Sanwong and Sommanee gave in \cite{SS08,Sul07} in a more general approach that covers all types of independence algebras, using similar ideas to those they developed.

We start by showing that $\GR$ behaves the same way in all of $\TAB$ as given by:

\begin{proposition}\label{GR in TAB}
If $\alpha, \beta\in \TAB$, then $\alpha = \beta\mu$ for some $\mu\in \TAB$ if and only if $\ker\beta \subseteq \ker\alpha$. Consequently, $\alpha\GR\beta$ in \TAB if and only if $\ker\alpha=\ker\beta$.
\end{proposition}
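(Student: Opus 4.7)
The plan is to establish the factorisation equivalence first, and then deduce the characterisation of $\GR$ by applying it in both directions. The forward implication is immediate: if $\alpha = \beta\mu$ for some $\mu \in \TAB$, then any pair $(x,y) \in \ker\beta$ satisfies $x\alpha = x\beta\mu = y\beta\mu = y\alpha$, hence $(x,y) \in \ker\alpha$.

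For the reverse implication I would construct $\mu$ explicitly from the data of $\beta$ using the free basis property. Let $\{b_i\}$ be a preimage basis of $\beta$, so that $\{b_i\beta\}$ is a basis of $\im\beta \subseteq B$. By the exchange property, extend this to a basis of $A$, say $A = \clotX[\{b_i\beta\} \sqcup \{c_j\}]$, then fix any element $b_0 \in B$ and define
$$\mu = \pmap{b_i\beta & c_j \\ b_i\alpha & b_0}.$$
Since $b_i\alpha \in B$ for all $i$ (as $\alpha\in\TAB$) and $b_0\in B$, the free basis property produces a well-defined endomorphism lying in $\TAB$. To check that $\beta\mu = \alpha$, take any $a \in A$ and write $a\beta = t(\ob{b_i\beta}) = t(\ob{b_i})\beta$ for some term $t$, using that $a\beta \in \im\beta$. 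The kernel hypothesis then gives $(a, t(\ob{b_i})) \in \ker\beta \subseteq \ker\alpha$, so that
$$a\alpha = t(\ob{b_i})\alpha = t(\ob{b_i\alpha}) = t(\ob{b_i\beta\mu}) = t(\ob{b_i\beta})\mu = a\beta\mu,$$
as desired. The degenerate cases where $\{b_i\}$ or $\{c_j\}$ is empty are handled uniformly by the same calculation.

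The consequence about $\GR$ then follows by applying the equivalence twice: $\alpha\GR\beta$ in $\TAB$ if and only if there exist $\mu, \nu \in \TAB^1$ with $\alpha = \beta\mu$ and $\beta = \alpha\nu$. Separating the cases in which either factor is the adjoined identity (where $\alpha = \beta$ forces the two kernels to coincide trivially) reduces this to $\ker\beta \subseteq \ker\alpha$ together with $\ker\alpha \subseteq \ker\beta$, giving the stated equality. The main (and only) subtlety in the whole argument is that $\mu$ must lie inside $\TAB$ rather than in $\EndA$, which is precisely why the auxiliary basis elements $\{c_j\}$ are sent into $B$; beyond this bookkeeping the argument parallels the classical description of $\GRE$ on $\EndA$.
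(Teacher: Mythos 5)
Your proposal is correct and follows essentially the same route as the paper: the paper also builds $\mu$ on a basis $\clotX[\set{a_i\beta}\sqcup\set{x_j}]$ of $A$ by sending $a_i\beta\mapsto a_i\alpha$ and then runs the identical term-operation computation via $\ker\beta\subseteq\ker\alpha$ (it merely sends the complementary basis elements to $x_j\beta$ instead of a fixed $b_0\in B$, and deduces $\ker\alpha=\ker\beta$ from $\alpha\GRE\beta$ rather than by splitting on the adjoined identity). These differences are purely cosmetic, so no further comparison is needed.
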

\begin{proof}
Clearly if $\alpha = \beta\mu$ then $\ker\beta\subseteq \ker\alpha$. 
Now suppose that $\ker\beta\subseteq \ker\alpha$, and let $A\beta = \clotX[\set{a_i\beta}]$. 
Now let $A = \clotX[\set{a_i\beta}\sqcup\set{x_j}]$ and define $\gamma\in\TAB$ by:
$$ \gamma = \pmap{ a_i\beta & x_j \\ a_i\alpha & x_j\beta}.$$
By definition of $\set{a_i\beta}$, for any $z\in A$, there exists a term $t$ such that $z\beta = t(\ob{a_i\beta}) = t(\ob{a_i})\beta$ which shows that $(z,t(\ob{a_i}))\in \ker\beta \subseteq \ker\alpha$, and thus
$$z\beta\gamma = t\left(\ob{a_i\beta}\right)\gamma = t\left(\ob{(a_i\beta)\gamma}\right) = t\left(\ob{a_i\alpha}\right) = t(\ob{a_i})\alpha = z\alpha.$$
Therefore we have that $\alpha = \beta\gamma$.

It follows from this that if $\ker\alpha = \ker\beta$ then $\alpha\GR\beta$. 
Conversely if $\alpha\GR\beta$ then $\alpha\GRE\beta$ and so $\ker\alpha = \ker\beta$ using the remarks made previously.
\qed\end{proof}

Notice that regular maps can only be related to other regular maps since $\GR\subseteq \GD$ and in any semigroup either all elements in a $\GD$-class are regular, or none of them is.

The relation $\GL$, however, does not behave exactly as in $\EndA$ and is more restrictive on the non-regular part of $\TAB$.
\begin{proposition}\label{GL in TAB}
If $\alpha\in \TAB$ and $\beta\in Q$, then $\alpha= \lambda\beta$ for some $\lambda\in \TAB$ if and only if $\im\alpha \subseteq \im\beta$. 
Consequently, $\alpha\GL\beta$ in \TAB if and only if $\alpha=\beta$ or (~$\im\alpha = \im\beta$ and $\alpha, \beta\in Q$).
\end{proposition}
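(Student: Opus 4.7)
The plan is to prove the factorisation statement first and then derive the characterisation of $\GL$ from it. For the forward direction of the first claim, the computation $\im(\lambda\beta) = (A\lambda)\beta \subseteq A\beta = \im\beta$ is immediate and uses neither regularity of $\beta$ nor anything else non-trivial. The reverse direction is where the hypothesis $\beta\in Q$ becomes crucial.

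Assuming $\im\alpha\subseteq\im\beta$ and $\beta\in Q$, I would construct $\lambda$ directly on a basis. Pick a basis $\set{x_k}$ of $A$. For each $k$, the element $x_k\alpha$ lies in $\im\alpha\subseteq\im\beta = A\beta = B\beta$ (the equality $A\beta=B\beta$ being exactly the defining condition of $Q$), so there is some $y_k\in B$ with $y_k\beta = x_k\alpha$. Define $\lambda$ on the basis by $x_k\lambda = y_k$ and extend via the free basis property; since each $y_k$ lies in $B$, we get $\lambda\in\TAB$. Then $\lambda\beta$ and $\alpha$ agree on the basis $\set{x_k}$, hence are equal as endomorphisms. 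The only subtle point here is making sure the preimage can be chosen in $B$, which is why regularity of $\beta$ is essential — without $\beta\in Q$ we could only guarantee a preimage in $A$, which would not keep $\lambda$ inside $\TAB$.

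For the consequence, I would use the convention that $\alpha\GL\beta$ means $\TAB^1\alpha = \TAB^1\beta$, so there exist $\lambda,\mu\in\TAB^1$ with $\alpha=\lambda\beta$ and $\beta=\mu\alpha$. If either $\lambda$ or $\mu$ equals the adjoined identity, one immediately gets $\alpha=\beta$. Otherwise both $\lambda,\mu\in\TAB$, whence $\alpha=\lambda\mu\alpha$, and Lemma \ref{Regular maps factorised by idp} forces $\alpha\in Q$; symmetrically $\beta\in Q$. Once both are regular, the first part of the proposition applies in both directions, giving $\im\alpha\subseteq\im\beta$ and $\im\beta\subseteq\im\alpha$, that is, $\im\alpha=\im\beta$. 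Conversely, if $\alpha=\beta$ there is nothing to show, and if $\im\alpha=\im\beta$ with both in $Q$, the first part produces the required $\lambda$ and $\mu$ in $\TAB$, so $\alpha\GL\beta$.

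I expect the main conceptual obstacle to be the asymmetry created by the fact that $\TAB$ is not regular in general: the statement of $\GL$ has to split into the trivial case $\alpha=\beta$ and the non-trivial case where both maps are regular. The reason this split appears, and does not appear for $\GR$ in Proposition \ref{GR in TAB}, is exactly that the factorisation lemma requires the \emph{right} factor to lie in $Q$, with no corresponding restriction on the left factor. Making sure the proof cleanly separates the use of $\TAB^1$ versus $\TAB$, and invokes Lemma \ref{Regular maps factorised by idp} at the right moment to extract regularity from the identity $\alpha=\lambda\mu\alpha$, will be the most delicate bookkeeping step.
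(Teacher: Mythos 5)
Your proposal is correct and follows essentially the same route as the paper: lift through $\beta$ using $A\beta = B\beta$ and the free basis property to build $\lambda$, then use Lemma \ref{Regular maps factorised by idp} on $\alpha=\lambda\mu\alpha$ to force regularity in the characterisation of $\GL$. Your construction of $\lambda$ is in fact a little cleaner than the paper's, which routes the same idea through a preimage basis of $\alpha$ and explicit term expressions before verifying $\alpha=\lambda\beta$ on a basis; assigning a $B$-preimage to the image of every basis element of $A$ at once, as you do, is equally valid since two endomorphisms agreeing on a basis coincide.
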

\begin{proof}
Let $\alpha\in\TAB$ and $\beta\in Q$.
Clearly if $\alpha = \lambda\beta$, then $\im\alpha \subseteq\im\beta$. 
Conversely, let us assume that $\im\alpha\subseteq \im\beta$ and write $A\alpha = \clotX[\set{a_i\alpha}]$ with $\set{a_i}\subseteq A$. 
Since $\beta\in Q$, we have that $\im\alpha\subseteq B\beta$ and thus there exist $\set{f_i}\subseteq B$ such that $a_i\alpha = f_i\beta$ for all $i\in I$, hence the set $\{f_i\beta\}$ is independent.
Now we take $\{f_j\beta\}$ such that $A\beta = \clotX[\{f_i\beta\}\sqcup\{f_j\beta\}]$ and let $\{x_k\}$ and $\{y_\ell\}$ subsets of $A$ be such that $A=\clotX[\{a_i\}\sqcup\{x_k\}]$ and 
$A = \clotX[\{f_i\}\sqcup\{f_j\}\sqcup\{y_\ell\}]$. 
From this we can write the maps $\alpha$ and $\beta$ as:
$$
\alpha = \begin{pmatrix}a_i & x_k \\ a_i\alpha & t_k(\ob{a_i\alpha}) \end{pmatrix} \qquad \text{and} \qquad 
\beta = \pmap{f_i & f_j & y_\ell \\ f_i\beta & f_j\beta & y_\ell\beta}.
$$
for some terms $t_k$.
By defining $\lambda\in \TAB$ by 
$$ \lambda = \pmap{a_i & x_k \\ f_i & t_k(\ob{f_i})},$$
we get that $a_i\lambda\beta = f_i\beta = a_i\alpha$ and $x_k\lambda\beta = t_k(\ob{f_i})\beta = t_k(\ob{f_i\beta}) = t_k(\ob{a_i\alpha}) = x_k\alpha$.
Thus $\alpha = \lambda\beta$, which concludes the first statement of the proposition.

Now, let us assume that $\alpha\GL\beta$ in $\TAB$. 
Then, there exist $\lambda, \lambda'\in \TAB^1$ such that $\alpha = \lambda\beta$ and $\beta = \lambda'\alpha$. 
If $\lambda=1$ or $\lambda'=1$ then we have that $\alpha = \beta$.
Otherwise, we have that 
$$\alpha = \lambda\lambda' \alpha\quad \text{and} \quad \beta = \lambda'\lambda\beta,$$
from which we get that $\alpha,\beta\in Q$ using Lemma \ref{Regular maps factorised by idp}.
From the first part of the proposition we also have that $\im\alpha\subseteq \im\beta$ and $\im\beta\subseteq \im\alpha$, hence the equality.
The converse follows directly from the description of $\GLQ$ given above whenever $\alpha\neq \beta$.
\qed\end{proof}

From $\GR$ and $\GL$ we can deduce the description of the relation $\GH$ which is the equality relation on $\Qc$ and consists of the set of maps with the same kernel and image on $Q$.
The relations $\GD$ and $\GJ$ are given by the following two propositions.

\begin{proposition}\label{GD in TAB}
If $\alpha,\beta \in \TAB$, then $\alpha\GD\beta$ in $\TAB$ if and only if $\ker\alpha = \ker\beta$ or ($\rho(\alpha)=\rho(\beta)$ and $\alpha, \beta\in Q$).
\end{proposition}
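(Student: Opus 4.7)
The plan is to use $\GD = \GR \circ \GL$ together with the characterisations of $\GR$ and $\GL$ given in Propositions \ref{GR in TAB} and \ref{GL in TAB}. A key auxiliary observation that I will invoke throughout is that, since $Q$ consists exactly of the maps $\alpha\in\TAB$ such that $A\alpha = B\alpha$, membership of $Q$ is determined by the kernel alone: if $\ker\alpha = \ker\gamma$ and $\alpha \in Q$, then for any $a\in A$ there exists $b\in B$ with $(a,b)\in\ker\alpha = \ker\gamma$, so $a\gamma = b\gamma$, giving $A\gamma \subseteq B\gamma$, hence $\gamma\in Q$.

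For the reverse implication, I first handle the easy case: if $\ker\alpha = \ker\beta$ then Proposition \ref{GR in TAB} gives $\alpha\GR\beta$ and thus $\alpha\GD\beta$. In the case $\rho(\alpha)=\rho(\beta)$ with $\alpha,\beta\in Q$, Lemma \ref{subalgebras of same rank are isomorphic} yields $\im\alpha\cong\im\beta$. Applying Lemma \ref{isom of images gives RL related maps} produces a map $\gamma\in\TAB$ with $\im\gamma=\im\beta$ and $\ker\gamma=\ker\alpha$. The observation above shows $\gamma\in Q$, so $\gamma\GL\beta$ in $\TAB$ by Proposition \ref{GL in TAB}, while $\alpha\GR\gamma$ by Proposition \ref{GR in TAB}; hence $\alpha\GD\beta$.

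For the forward implication, suppose $\alpha\GD\beta$, so there exists $\gamma\in\TAB$ with $\alpha\GR\gamma\GL\beta$. Proposition \ref{GR in TAB} gives $\ker\alpha=\ker\gamma$. Proposition \ref{GL in TAB} leaves two cases. Either $\gamma=\beta$, in which case $\ker\alpha=\ker\beta$ and we are done. Otherwise $\gamma,\beta\in Q$ with $\im\gamma=\im\beta$, so in particular $\rho(\gamma)=\rho(\beta)$. Combined with $\ker\alpha = \ker\gamma$ this gives $\rho(\alpha)=\rho(\gamma)=\rho(\beta)$, and the kernel observation above transfers regularity from $\gamma$ to $\alpha$, so $\alpha\in Q$ as well.

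I do not anticipate a major obstacle: the description of $\GR$ is unconditional, so the only subtlety lies in Proposition \ref{GL in TAB} being conditional on regularity, which is precisely why the statement of the proposition splits into the two disjuncts involving $\ker$ and $Q$. The most delicate point to verify cleanly is that the intermediate map $\gamma$ obtained from Lemma \ref{isom of images gives RL related maps} inherits regularity through its kernel, and dually that $\alpha$ in the forward direction is forced into $Q$; both reduce to the elementary kernel argument stated in the first paragraph.
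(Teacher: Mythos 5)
Your proposal is correct and follows essentially the same route as the paper: decompose $\GD$ as $\GR\circ\GL$ and feed in Propositions \ref{GR in TAB} and \ref{GL in TAB}, using Lemmas \ref{subalgebras of same rank are isomorphic} and \ref{isom of images gives RL related maps} to build the intermediate map in the converse. The only (harmless) divergence is that you justify the transfer of $Q$-membership across equal kernels by a direct elementary argument from the definition of $Q$, where the paper instead appeals to the general fact that a $\GD$-class is either entirely regular or entirely non-regular.
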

\begin{proof}
Assume that $\alpha\GD\beta$. 
Then there exists $\gamma\in \TAB$ such that $\alpha\GR\gamma\GL\beta$.
By Propositions \ref{GR in TAB} and \ref{GL in TAB} we have that $\ker\alpha = \ker\gamma$, and either $\gamma =\beta$ or $\gamma, \beta\in Q$ have the same image.
In the case where $\gamma = \beta$ we obtain that $\ker\alpha = \ker\gamma = \ker\beta$, as expected. 
So let us now assume that $\im\gamma = \im\beta$ and $\gamma, \beta\in Q$. 
Then $\alpha$ is also in $Q$ since $\alpha\GR\gamma$ and we also have that
$$ \rho(\beta) = \rho(\gamma) = \dim(A/\ker\gamma) = \dim(A/\ker\alpha) = \rho(\alpha),$$
as required.

Conversely, if $\ker\alpha = \ker\beta$, then $\alpha\GR\beta$ and thus $\alpha\GD\beta$.
Otherwise, assume $\rho(\alpha) = \rho(\beta)$ and $\alpha, \beta\in Q$. 
Therefore, by Lemmas \ref{subalgebras of same rank are isomorphic} and \ref{isom of images gives RL related maps} we have that $\im\alpha \cong \im\beta$ and then there exists $\gamma\in\TAB$ such that $\im\gamma=\im\beta$, and $\ker\gamma=\ker\alpha$. 
Furthermore, since $\alpha\in Q$, we get that $\gamma\in Q$ by the note following Proposition \ref{GR in TAB}, which finishes the proof that $\alpha\GR\gamma\GL\beta$ and therefore $\alpha\GD\beta$.
\qed\end{proof}

\begin{proposition}\label{GJ in TAB}
If $\alpha,\beta\in \TAB$, then $\alpha= \lambda\beta\mu$ for some $\lambda\in \TAB$ and $\mu\in\TAB^1$ if and only if $\rho(\alpha) \leq \dim(B\beta)$. Consequently, $\alpha\GJ\beta$ in $\TAB$ if and only if one of the following happens:
\begin{itemize}
    \item $\ker\alpha = \ker\beta$, or
    \item $\rho(\alpha) = \dim(B\alpha) = \dim(B\beta) = \rho(\beta)$.
\end{itemize}
\end{proposition}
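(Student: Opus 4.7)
The plan is first to prove the factorisation characterisation, then to use it together with Proposition \ref{GR in TAB} to deduce the description of $\GJ$. The forward implication of the factorisation is essentially immediate: if $\alpha = \lambda\beta\mu$ with $\lambda \in \TAB$ and $\mu\in\TAB^1$, then $\im\alpha = A\lambda\beta\mu \subseteq B\beta\mu = (B\beta)\mu$ because $A\lambda \subseteq B$, so $\rho(\alpha) \leq \dim((B\beta)\mu) \leq \dim(B\beta)$, since $\mu$ restricted to $B\beta$ is a homomorphism (or $\mu$ is the identity). For the converse, I would write $A = \clotX[\set{a_i}\sqcup\set{x_k}]$ with $\set{a_i}$ a preimage basis of $\alpha$, so that $x_k\alpha = t_k(\ob{a_i\alpha})$ for suitable terms $t_k$. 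Using $|I| = \rho(\alpha) \leq \dim(B\beta)$, I pick $\set{b_i}\subseteq B$ with $\set{b_i\beta}$ independent in $B\beta$, define $\lambda\in\TAB$ by $a_i\mapsto b_i$ and $x_k\mapsto t_k(\ob{b_i})$ (both in $B$), and define $\mu\in\TAB$ on a basis of $A$ extending $\set{b_i\beta}$ by sending $b_i\beta\mapsto a_i\alpha$ and the leftover basis elements to some fixed element of $B$. A short computation threading the terms $t_k$ through $\lambda\beta\mu$ then yields $\alpha = \lambda\beta\mu$.

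With the factorisation claim in hand, together with Proposition \ref{GR in TAB} in the form ``$\alpha \in \beta\TAB$ iff $\ker\beta \subseteq \ker\alpha$'', I obtain the clean description: $\alpha \in \TAB^1\beta\TAB^1$ iff one of the three conditions holds --- $\alpha = \beta$, or $\ker\beta \subseteq \ker\alpha$, or $\rho(\alpha) \leq \dim(B\beta)$. The backward direction of the $\GJ$-statement is then almost automatic: if $\ker\alpha = \ker\beta$ then $\alpha \GR \beta$ and hence $\alpha \GJ \beta$; if the four-fold rank equality holds then the factorisation claim applied in both directions places $\alpha$ in $\TAB\beta\TAB^1$ and $\beta$ in $\TAB\alpha\TAB^1$.

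The forward direction of the $\GJ$-statement is where the real work lies, and I expect it to be the main obstacle. Assuming $\alpha \GJ \beta$ with $\alpha \neq \beta$, each of $\alpha\in\TAB^1\beta\TAB^1$ and $\beta\in\TAB^1\alpha\TAB^1$ forces either a kernel inclusion or a rank inequality, giving four sub-cases. The ``both kernels'' sub-case gives $\ker\alpha = \ker\beta$ directly, and the ``both ranks'' sub-case yields the collapsing chain $\rho(\alpha) \leq \dim(B\beta) \leq \rho(\beta) \leq \dim(B\alpha) \leq \rho(\alpha)$. The two mixed sub-cases are the delicate ones. Taking for instance $\ker\beta\subseteq\ker\alpha$ and $\rho(\beta)\leq\dim(B\alpha)$, the kernel inclusion yields both $\rho(\alpha)\leq\rho(\beta)$ (via the induced surjection of quotients) and a factorisation $\alpha = \beta\mu$ with $\mu\in\TAB$; the latter gives $B\alpha = (B\beta)\mu$ and hence $\dim(B\alpha)\leq\dim(B\beta)$. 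Combining these with the trivial bound $\dim(B\cdot)\leq\rho(\cdot)$ collapses every inequality in sight to $\rho(\alpha) = \dim(B\alpha) = \dim(B\beta) = \rho(\beta)$, and the remaining mixed sub-case is handled symmetrically.
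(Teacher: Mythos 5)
Your proposal is correct and follows essentially the same route as the paper: the same one-line forward inequality, the same explicit construction of $\lambda$ and $\mu$ on a preimage basis for the converse of the factorisation claim, and the same reduction of the $\GJ$-description to a case analysis on which of the factors in $\alpha=\lambda\beta\mu$ and $\beta=\lambda'\alpha\mu'$ are the adjoined identity. The only divergence is local, in the mixed sub-case (one kernel inclusion, one rank inequality): the paper substitutes one factorisation into the other to obtain $\alpha=\lambda'\beta(\mu\mu'\mu)$ and thereby lands back in the ``both ranks'' case, whereas you extract $\rho(\alpha)\leq\rho(\beta)$ and $\dim(B\alpha)\leq\dim(B\beta)$ directly from $\ker\beta\subseteq\ker\alpha$ and collapse the resulting chain of inequalities --- both resolutions are valid and of comparable length.
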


\begin{proof}
Let us assume first that $\alpha = \lambda\beta\mu$ for some $\lambda\in\TAB$ and $\mu\in\TAB^1$.
Then $A\alpha = (A\lambda)\beta\mu \subseteq (B\beta)\mu$ which gives that $\rho(\alpha) = \dim(A\alpha)\leq \dim\left((B\beta)\mu\right) \leq \dim(B\beta)$.
Conversely, let us assume that $\rho(\alpha) \leq \dim(B\beta)$.
If $\rho(\alpha) = 0$, then clearly $\alpha = \alpha\beta$ and we obtain the result.
Assuming now that $\rho(\alpha)\geq 1$ and writing $A\alpha = \clotX[\set{x_i\alpha}]$, we have by the first hypothesis on the rank of $\alpha$ that there exists an independent set $\set{b_i}\subseteq B$ such that $A\beta = \clotX[\{b_i\beta\}\sqcup \{y_k\beta\}]$. 
Thus we can write $A = \clotX[\{x_i\}\sqcup\{x'_j\}] = \clotX[\{b_i\}\sqcup\{y_k\}\sqcup\{y_\ell\}]$ and we get that 
$$ \alpha = \pmap{x_i & x'_j\\ x_i\alpha & u_j(\ob{x_i\alpha})} \quad \text{and} \quad 
\beta = \pmap{b_i & y_k & y_\ell \\ b_i\beta & y_k\beta & v_\ell(\ob{b_i\beta}, \ob{y_k\beta})},$$
for some terms $u_j$ and $v_\ell$. Now if we also extend $\{b_i\beta\}$ into a basis of $A$ via $\{z_m\}$ and define $\lambda$ and $\mu$ in $\TAB$ by the following:
$$ \lambda = \pmap{x_i & x'_j\\ b_i & u_j(\ob{b_i})} \quad \text{and} \quad 
\mu = \pmap{b_i\beta & z_m \\ x_i\alpha & x_1\alpha},$$
then it can be easily seen that $\alpha=\lambda\beta\mu$.

In order to prove the second part of the proposition, let us first assume that $\alpha\GJ\beta$, that is, there exist $\lambda,\mu,\lambda',\mu' \in \TAB^1$ such that $\alpha = \lambda\beta\mu$ and $\beta=\lambda'\alpha\mu'$.
If $\lambda=\lambda'=1$, then we have that $\alpha\GR\beta$ and thus Proposition \ref{GR in TAB} gives us that $\ker\alpha=\ker\beta$.
If we have that only one of $\lambda$ and $\lambda'$ is $1$, then we can find $\gamma, \gamma'\in\TAB$ and $\delta,\delta'\in \TAB^1$ such that $\alpha = \gamma\beta\delta$ and $\beta=\gamma'\alpha\delta'$. 
Indeed, assuming without loss of generality that $\lambda=1$, we have $\alpha = \beta\mu = \lambda'\alpha\mu'\mu = \lambda'\beta(\mu\mu'\mu)$ and we thus can take $\gamma=\gamma'=\lambda'$, $\delta=\mu\mu'\mu$ and $\delta'=\mu'$.
By the previous part of the proposition we therefore have that $\rho(\alpha)\leq\dim(B\beta)$ and $\rho(\beta)\leq \dim(B\alpha)$.
Therefore we get that:
$$ \dim (A\alpha) = \rho(\alpha) \leq \dim (B\beta) \leq \dim (A\beta) = \rho(\beta)\leq\dim(B\alpha) \leq \dim(A\alpha),$$
which forces the equalities.

Conversely, assume that $\ker\alpha=\ker\beta$, or $\rho(\alpha) = \dim(B\beta) = \dim(B\alpha) = \rho(\beta)$.
If the first condition holds, then $\alpha\GR\beta$ by Proposition \ref{GR in TAB} and thus $\alpha\GJ\beta$ since $\GR\subseteq\GJ$. 
On the other hand, if the second condition holds, then we use the first part of the proof in both directions in order to obtain the desired result.
\qed\end{proof}

\begin{remark}
In the proof of Proposition \ref{GJ in TAB} given above, notice that the maps $\lambda$ and $\mu$ are constructed in such a way that $\mu\in Q$, and whenever $\set{x_i}\subseteq B$ we also have $\lambda\in Q$.
\end{remark}

From Lemma \ref{lem rank of restriction wrt finiteness}, we know that the condition $\rho(\alpha) = \dim(B\alpha)$ is equivalent to $\alpha\in Q$ only if $\alpha$ has finite rank, which means that $\GD = \GJ$ whenever $\AlgB$ is finite dimensional. 
On the other hand, if $\AlgB$ is infinite dimensional, then there exist non-regular maps of infinite rank that are $\GJ$-related to regular maps of the same rank, which shows that in that case $\GD\subsetneq \GJ$.
However, on the regular subsemigroup $Q$ these relations always coincide and the description also corresponds to that of $\GDE$ relating maps of the same rank, as is given by the following: 

\begin{lemma}\label{GJ=GD in Q}
Let $\alpha, \beta\in Q$. Then $\alpha = \lambda\beta\mu$ for some $\lambda,\mu\in Q$ if and only if $\rho(\alpha) \leq \rho(\beta)$.
Consequently, $\alpha\GJQ\beta$ if and only if $\rho(\alpha) = \rho(\beta)$, and thus $\GJQ = \GDQ$.
\end{lemma}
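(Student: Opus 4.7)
The plan is to obtain the first equivalence by direct appeal to the machinery already developed, then to derive the $\GJQ = \GDQ$ statement as a short corollary. The forward direction $(\Rightarrow)$ of the first equivalence is immediate: if $\alpha = \lambda\beta\mu$ with $\lambda,\mu\in Q\subseteq \TAB$, two applications of Lemma \ref{rank of product of maps less than min of ranks} yield $\rho(\alpha) \leq \rho(\beta\mu)\leq \rho(\beta)$.

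For the converse direction, I would reuse the construction that proves Proposition \ref{GJ in TAB}. Since $\beta\in Q$ we have $\dim(B\beta) = \rho(\beta) \geq \rho(\alpha)$, so that proposition already yields $\lambda\in\TAB$ and $\mu\in\TAB^1$ with $\alpha = \lambda\beta\mu$. The key point is the remark following it: $\mu$ is always built inside $Q$, and $\lambda$ lies in $Q$ whenever the preimage basis of $\alpha$ used in the construction can be chosen inside $B$. But this is exactly what the third bullet of Remark \ref{remark structure Q} guarantees for $\alpha\in Q$, so both $\lambda$ and $\mu$ end up in $Q$. The sole edge case is $\rho(\alpha)=0$, which forces $\Csts\nonempty$ and $\im\alpha\subseteq\Csts$; since every endomorphism fixes constants we then have $\alpha\beta = \alpha$, and writing $\alpha = \alpha\beta\eta$ for any idempotent $\eta\in Q$ provided by Remark \ref{remark structure Q} closes this case.

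The consequence follows readily. If $\alpha\GJQ\beta$, there exist $\lambda_1,\mu_1,\lambda_2,\mu_2\in Q^1$ with $\alpha = \lambda_1\beta\mu_1$ and $\beta = \lambda_2\alpha\mu_2$; a short case split on whether each factor is the adjoined identity, combined with Lemma \ref{rank of product of maps less than min of ranks}, gives $\rho(\alpha)\leq\rho(\beta)\leq\rho(\alpha)$. Conversely, given $\rho(\alpha)=\rho(\beta)$, the first equivalence applied symmetrically supplies the required factorisations inside $Q$. To see that $\GDQ$ coincides with rank equality on $Q$, recall that $Q$ is a regular subsemigroup of $\EndA$, so $\GRQ$ and $\GLQ$ are the restrictions of $\GRE$ and $\GLE$; Lemma \ref{isom of images gives RL related maps}, together with the note after Proposition \ref{GR in TAB} that a map $\GR$-related to a regular map stays in $Q$, lets us keep the intermediate $\GD$-witness inside $Q$. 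Hence $\GJQ = \GDQ$. The only real obstacles are this bookkeeping about staying inside $Q$ and the $\rho(\alpha)=0$ exception; beyond that, the argument is essentially a packaged invocation of earlier results.
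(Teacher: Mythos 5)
Your proposal is correct and follows essentially the same route as the paper: the forward direction via Lemma \ref{rank of product of maps less than min of ranks}, the converse by reusing the $\lambda,\mu$ built in the proof of Proposition \ref{GJ in TAB} together with the remark that they land in $Q$ when $\alpha,\beta\in Q$, and the consequence from the description of $\GDQ$. Your explicit treatment of the $\rho(\alpha)=0$ case is a welcome refinement, since there the cited construction only gives $\mu=1\in\TAB^1$ and the paper glosses over replacing it by an element of $Q$.
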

\begin{proof}
Clearly if $\alpha=\lambda\beta\mu$ then $\rho(\alpha)\leq \rho(\beta\mu) \leq \rho(\beta)$ by Lemma \ref{rank of product of maps less than min of ranks}. 
Conversely, assume that $\rho(\alpha)\leq\rho(\beta)$.
Since both $\alpha$ and $\beta$ lie in $Q$, we have that $A\alpha=B\alpha$ and $\rho(\beta) =\dim(A\beta) = \dim(B\beta)$. 
Thus, using the same maps $\lambda$ and $\mu$ constructed in the proof of Proposition \ref{GJ in TAB} together with the remark above, we get that $\alpha = \lambda\beta\mu$ with $\lambda,\mu\in Q$.

The second part of the lemma now follow directly from the definition of maps being $\GJ$-related in $Q$ together with the characterisation of $\GD$-classes in $Q$ given in Proposition \ref{GD in TAB}.
\qed\end{proof}

\section{Ideals}\label{section on ideals}

In the same way that Section \ref{section on basic Greens} was simply generalising the approach to Green's relations exhibited in the cases of vector spaces and sets, this section is generalising the description of the ideals of \TAB using the same ideas developed by Sullivan and Mendes-Gonçalves \cite{MGS10,Sul07}.
It is well-known \cite{Gould95} that the ideals of the endomorphism monoid of an independence algebra are precisely the sets $\set{\alpha\in \EndA \mid \rho(\alpha)< k}$ for each $k\leq (\dim A)^+$, where we denote by $\kappa^+$ the successor cardinal of $\kappa$.
However, in the context of \TAB, the ideals are not solely determined by ranks. 
Nevertheless, in the subsemigroup $Q$, all ideals are in one-to-one correspondence with the cardinals smaller than $(\dim B)^+$.

Recall that $e$ denotes the smallest rank of a non-empty subalgebra of $\AlgA$ or equivalently the smallest rank of a map in \TAB.

\begin{proposition}[see {\cite[Theorem 8]{Sul07}}]
The ideals of $Q$ are precisely the sets 
$$ Q_r = \{ \alpha\in Q\mid \rho(\alpha)< r \} $$
where $e < r\leq (\dim B)^+$.
\end{proposition}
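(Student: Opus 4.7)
The plan is to combine Lemma \ref{GJ=GD in Q}, which says that for $\alpha,\beta\in Q$ one has $\alpha\in Q\beta Q$ precisely when $\rho(\alpha)\leq\rho(\beta)$, with Lemma \ref{rank of product of maps less than min of ranks}, which bounds the rank of a product. Together these will show that an ideal of $Q$ is determined entirely by the set of ranks it contains, and that this set must be downward closed.

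First I would verify the easy direction: for $e<r\leq(\dim B)^+$, the set $Q_r$ is an ideal of $Q$. Non-emptiness is ensured by $r>e$, since one can exhibit an element of $Q$ of rank $e$ (for example the idempotent described in Remark \ref{remark structure Q}, or, if $e=0$, a constant endomorphism). Closure under left and right multiplication by elements of $Q$ follows directly from Lemma \ref{rank of product of maps less than min of ranks}: if $\alpha\in Q_r$ and $\beta\in Q$, then $\rho(\alpha\beta),\rho(\beta\alpha)\leq\rho(\alpha)<r$, and the products remain in $Q$ because $Q$ is a subsemigroup.

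For the converse, let $I$ be a nonempty ideal of $Q$ and set
$$r=\min\{\kappa\text{ a cardinal}\mid\rho(\alpha)<\kappa\text{ for all }\alpha\in I\}.$$
This minimum exists since every $\alpha\in Q$ has a preimage basis contained in $B$ (Remark \ref{remark structure Q}), whence $\rho(\alpha)\leq\dim B$ and $(\dim B)^+$ is an upper bound, giving at once $r\leq(\dim B)^+$. By definition $I\subseteq Q_r$. For the reverse inclusion, pick $\beta\in Q_r$, so $\rho(\beta)<r$. By the minimality of $r$, the cardinal $\rho(\beta)$ itself does not satisfy the defining property, so there exists $\alpha\in I$ with $\rho(\alpha)\geq\rho(\beta)$. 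Applying Lemma \ref{GJ=GD in Q} produces $\lambda,\mu\in Q$ with $\beta=\lambda\alpha\mu$, and the ideal property of $I$ gives $\beta\in I$. Hence $Q_r=I$. Finally, $r>e$ because $I$ is nonempty and every element of $Q$ has rank at least $e$, so the strict bound $r$ must exceed $e$.

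I do not expect any serious obstacle here: the two factorisation/rank lemmas carry almost all the weight, and the argument mirrors the classical description of ideals of $\EndA$. The only subtlety worth flagging is the choice of $r$ as a cardinal rather than a successor cardinal, so that the two strict inequalities $\rho(\alpha)<r$ (for $\alpha\in I$) and $\rho(\beta)<r$ (for $\beta\in Q_r$) align properly and force the existence of some $\alpha\in I$ with $\rho(\alpha)\geq\rho(\beta)$ in the second half.
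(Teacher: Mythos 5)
Your proposal is correct and follows essentially the same route as the paper: the forward direction via Lemma \ref{rank of product of maps less than min of ranks}, and the converse by taking $r$ to be the least cardinal exceeding all ranks in $I$ and then invoking the factorisation $\beta=\lambda\alpha\mu$ from Lemma \ref{GJ=GD in Q}. Your explicit justifications that $e<r\leq(\dim B)^+$ are slightly more detailed than the paper's, but the argument is the same.
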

\begin{proof}
Let $e<r\leq (\dim B)^+$. Let $\alpha\in Q_r$ and $\beta\in Q$. 
Then from Lemma \ref{rank of product of maps less than min of ranks} we have that $\rho(\beta\alpha), \rho(\alpha\beta)\leq \rho(\alpha)$ and then both $\alpha\beta$ and $\beta\alpha$ lie in $Q_r$, which shows that $Q_r$ is an ideal of $Q$.

Conversely, let us assume that $I$ is an ideal of $Q$ and let $r$ be the least cardinal strictly greater than the rank of all maps in $I$, so that $e<r\leq (\dim B)^+$. 
We show that $I =Q_r$. Clearly $I\subseteq Q_r$. 
For the reverse inclusion, let $\beta\in Q_r$.
If $\rho(\alpha) < \rho(\beta)$ for all $\alpha\in I$, then this means that $\rho(\beta) \geq r$ by minimality of $r$, which contradicts our assumption that $\beta\in Q_r$. 
Thus, there exists some $\alpha\in I$ such that $\rho(\beta)\leq \rho(\alpha)$.
By Lemma \ref{GJ=GD in Q}, it follows that there exist $\lambda, \mu\in Q$ such that $\beta= \lambda\alpha\mu$.
This gives us that $\beta\in I$ since $I$ is an ideal of $Q$ and thus $Q_r\subseteq I$. 
Therefore $I=Q_r$, completing the proof.
\qed\end{proof}

Following the usual definition in \EndA, we define the sets $T_k$ for any $ k>e$ by 
$T_k = \set{\alpha\in\TAB\mid\rho(\alpha)< k}$, which are easily seen to be ideals of $\TAB$ using Lemma \ref{rank of product of maps less than min of ranks}.
It is obvious that for all $k\geq (\dim B)^+$ we have that $T_k = T_{(\dim B)^+} = \TAB$.
Following \cite{MGS10,Sul07} we define for any non-empty subset $S$ of $\TAB$ the cardinal $r(S)$ and the subset $K(S)\subseteq \TAB$ as follows:
$$\begin{gathered}
    r(S) = \min \{\kappa \leq (\dim B)^+ \mid \dim(B\alpha) < \kappa, \ \forall \alpha\in S\}, \\
\text{and }    K(S) = \{ \beta \in \TAB \mid \ker \alpha \subseteq \ker\beta, \text{ for some }\alpha\in S\}.
\end{gathered}$$
Using Proposition \ref{GR in TAB}, we can also express $K(S)$ as 
$$K(S)=\set{\beta\in\TAB\mid \beta = \alpha\mu, \text{ for some }\alpha\in S\text{ and }\mu\in\TAB}.$$
From this, it is clear that if $\beta\in K(S)$ and $\lambda\in\TAB$, then we have that $\beta\lambda\in K(S)$ and thus $K(S)$ is a right ideal.

We now want to show that any ideal of $\TAB$ is of the form $T_{r(S)}\cup K(S)$ or $T_{r(S)^+}\cup K(S)$ for some non-empty subset $S\subseteq \TAB$.
The ideals $T_k$ are easily seen to satisfy this rule as $T_k = T_{r(S)}\cup K(S)$ for $S=T_k$. 
Indeed, let $S=T_k$. Then, for any $s<k$, $T_k$ contains the idempotent $\eta_s$ with image a subalgebra $\AlgC$ of $\AlgB$ that has rank $s$, from which we get that $r(S) = k$.
Also, if $\beta\in K(S)$, then $\beta = \alpha\mu$ for some $\alpha\in T_k$ and $\mu\in\TAB$. 
Hence $\rho(\beta)=\rho(\alpha\mu)\leq\rho(\alpha)<k$, which shows that $\beta\in T_k$ and thus $K(S)\subseteq T_k$. 
Therefore $T_k = T_{r(S)}\cup K(S)$ as claimed.

Before we can show that all ideals are of the form $T_{r(S)}\cup K(S)$ or $T_{r(S)^+}\cup K(S)$ for some non-empty set $S$, we first need to show that such sets are indeed ideals.

\begin{lemma}\label{Trs union K(S) are ideals}
For each non-empty subset $S$ of \TAB, the sets $T_{r(S)} \cup K(S)$ and $T_{r(S)^+}\cup K(S)$ are ideals of \TAB.
\end{lemma}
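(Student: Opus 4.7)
The plan is to exploit the fact that $T_k$ is already known to be a (two-sided) ideal of $\TAB$ and that $K(S)$ is a right ideal, so the union is automatically a right ideal in both cases. All the work therefore goes into verifying left-closure: given $\lambda\in\TAB$ and $\beta$ in the union, I want to show $\lambda\beta$ still lies in the union.

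I will split into two cases according to which piece $\beta$ comes from. First, if $\beta\in T_{r(S)}$ (resp.\ $T_{r(S)^+}$), then Lemma \ref{rank of product of maps less than min of ranks} gives $\rho(\lambda\beta)\leq\rho(\beta)<r(S)$ (resp.\ $<r(S)^+$), so $\lambda\beta$ stays in $T_{r(S)}$ (resp.\ $T_{r(S)^+}$). The non-trivial case is $\beta\in K(S)$: here $\beta=\alpha\mu$ for some $\alpha\in S$, $\mu\in\TAB$, and $\lambda\beta=\lambda\alpha\mu$ need not be writable as something in $S$ composed with an element of $\TAB$, so $\lambda\beta$ might fail to lie in $K(S)$. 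The trick is to show that it is forced into $T_{r(S)}$ instead.

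The key observation for this case is that since $\lambda\in\TAB$, we have $A\lambda\subseteq B$, hence
\[
    A\lambda\alpha\mu \;\subseteq\; B\alpha\mu,
\]
and consequently $\rho(\lambda\beta)=\dim(A\lambda\alpha\mu)\leq\dim(B\alpha\mu)\leq\dim(B\alpha)$, where the last inequality follows because $\mu$ is a homomorphism (its restriction to $B\alpha$ cannot raise the dimension). By the very definition of $r(S)$ we have $\dim(B\alpha)<r(S)$, so $\rho(\lambda\beta)<r(S)\leq r(S)^+$, and thus $\lambda\beta\in T_{r(S)}\subseteq T_{r(S)^+}$ in both settings.

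Combining the two cases shows $T_{r(S)}\cup K(S)$ and $T_{r(S)^+}\cup K(S)$ are left-closed, and together with the right-ideal property noted at the outset this gives that they are ideals. The main (mild) obstacle is the $\beta\in K(S)$ case just discussed; the essential point to highlight is that the ``left factor'' $\lambda$ lands in $B$, which is precisely what lets $B\alpha$ (and hence $r(S)$) control $\rho(\lambda\beta)$. I should also briefly remark that $r(S)>e$ (since $B$ is non-empty forces $\dim(B\alpha)\geq e$ for any $\alpha\in S$), so $T_{r(S)}$ is a well-defined object in the indexing used for the $T_k$.
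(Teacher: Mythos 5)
Your proof is correct and follows essentially the same route as the paper: the only substantive step is showing that for $\beta=\alpha\mu\in K(S)$ and $\lambda\in\TAB$ one has $\rho(\lambda\beta)\leq\dim(B\alpha\mu)\leq\dim(B\alpha)<r(S)$, which is exactly the paper's chain of inequalities, with the remaining cases handled by the fact that the $T_\kappa$ are two-sided ideals and $K(S)$ is a right ideal. The added remark that $r(S)>e$ is a small but harmless bonus.
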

\begin{proof}
Let $\emptyset \neq S\subseteq \TAB$, and consider $r(S)$ and $K(S)$ as in their definition.
As mentioned earlier, we first have that $K(S)$ is a right ideal.
Now let $\beta \in K(S)$ and $\lambda\in \TAB$. 
Then there exists $\alpha\in S$ and $\mu\in\TAB$ such that $\beta=\alpha\mu$.
Thus
$$ \rho(\lambda\beta) = \dim(A\lambda\beta) \leq \dim(B\beta) = \dim(B\alpha\mu) \leq \dim(B\alpha) < r(S),$$
and therefore $\lambda\beta\in T_{r(S)}$. 
The result now follows from the fact that $T_{r(S)} \subseteq T_{r(S)^+}$ and that $T_\kappa$ is a two-sided ideal of \TAB for all $\kappa>e$.
\qed\end{proof}

In order to show the reverse statement, we need a small lemma beforehand which will become handy in order to choose an adequate set $S$ for each ideal of $\TAB$.

\begin{lemma}\label{non-reg map with smaller rank}
If $\alpha\in Q$ and $e\leq s<\rho(\alpha)$, then there exists $\lambda\in \TAB$ such that $\lambda\alpha\notin Q$ and $\dim(B\lambda\alpha)=s$.
\end{lemma}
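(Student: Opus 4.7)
The plan is to build $\lambda$ explicitly by choosing its values on a well-chosen basis of $A$, exploiting that $\alpha \in Q$ admits a preimage basis in $B$. Since $A\alpha = B\alpha$, write $B\alpha = \clotX[\{b_i\alpha\}_{i\in I}]$ with $\{b_i\} \subseteq B$ a preimage basis, so $|I| = \rho(\alpha) > s$. I would fix $S \subseteq I$ with $|S| = s$ and a distinguished index $i_0 \in I \setminus S$. Then extend $\{b_i\}$ first to a basis $\{b_i\} \sqcup \{y_k\}$ of $B$, and then to a basis $\{b_i\} \sqcup \{y_k\} \sqcup \{a_j\}$ of $A$; since $\AlgB \subsetneq \AlgA$ the index set $J$ is non-empty, so I may pick $j_0 \in J$.

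The guiding idea is that $\lambda$ should crush the basis of $B$ into the rank-$s$ subalgebra $\clotX[\{b_i : i \in S\}]$, while sending the outside basis element $a_{j_0}$ to the ``fresh'' element $b_{i_0}$; this forces $b_{i_0}\alpha$ to appear in $A\lambda\alpha$ but not in $B\lambda\alpha$. I pick $c \in \clotX[\{b_i : i \in S\}]$ (choosing $c$ to be a constant when $s = 0$, which forces $e = 0$, and $c = b_{i_1}$ for any $i_1 \in S$ when $s \geq 1$), and define $\lambda$ on the chosen basis of $A$ by $b_i\lambda = b_i$ for $i \in S$, $b_i\lambda = c$ for $i \in I \setminus S$, $y_k\lambda = c$, $a_{j_0}\lambda = b_{i_0}$, and $a_j\lambda = c$ for $j \neq j_0$. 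The free basis property extends this to an endomorphism of $\AlgA$, and the fact that all listed images lie in $B$ gives $\lambda \in \TAB$. One then computes $B\lambda = \clotX[\{b_i : i \in S\} \cup \{c\}] = \clotX[\{b_i : i \in S\}]$ by the choice of $c$, whence $B\lambda\alpha = \clotX[\{b_i\alpha : i \in S\}]$ has dimension exactly $s$ by independence of $\{b_i\alpha\}_{i \in I}$.

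To conclude $\lambda\alpha \notin Q$ it suffices to exhibit an element of $A\lambda\alpha$ lying outside $B\lambda\alpha$. The natural candidate is $a_{j_0}\lambda\alpha = b_{i_0}\alpha$: when $s \geq 1$ it sits outside $\clotX[\{b_i\alpha : i \in S\}]$ precisely because $\{b_i\alpha\}_{i \in I}$ is independent and $i_0 \notin S$; when $s = 0$ we have $B\lambda\alpha = \Csts$ (since $c$ is then a constant, $B\lambda = \Csts$, and homomorphisms fix constants), but $b_{i_0}\alpha$ cannot be a constant as it belongs to an independent set. Either way $A\lambda\alpha \neq B\lambda\alpha$, so $\lambda\alpha \notin Q$, as required.

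The main subtlety, more than a genuine obstacle, is handling the boundary case $s = e = 0$ correctly, where $\clotX[\{b_i : i \in S\}]$ collapses to $\Csts$ and $c$ must be chosen as a constant so that $B\lambda\alpha$ has rank $0$ rather than $1$; outside that edge case the construction is a straightforward direct application of the free basis property combined with the preimage-basis structure of elements of $Q$.
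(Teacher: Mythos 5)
Your construction is correct and is essentially the paper's own proof: both take a preimage basis $\{b_i\}\subseteq B$ of $\alpha$, keep a size-$s$ subset fixed, collapse the rest of a basis of $B$ to a suitable element $c$ of the rank-$s$ subalgebra (a constant when $s=e=0$), and send a basis element of $A$ outside $B$ to a retained ``fresh'' preimage basis element so that its image under $\alpha$ lands in $A\lambda\alpha\setminus B\lambda\alpha$. The edge-case handling of $s=0$ matches the paper's treatment as well.
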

\begin{proof}
Since $\alpha\in Q$, then $A\alpha = B\alpha= \clotX[\set{b_i\alpha}]$ for some $\set{b_i}\subseteq B$, and for $A = \clotX[\set{b_i}\sqcup\set{x_j}]$ the map $\alpha$ can be written as
$$ \alpha = \pmap{b_i & x_j \\ b_i\alpha & u_j(\ob{b_i\alpha})},$$
for some terms $u_j$.
Now take $\set{b'_k}\sqcup \set{b_1} \subseteq \set{b_i} $ such that $\Card{K} = s$ (which is possible by the assumption on the value of $s$), and take some basis element $z\in A\setminus B$.
Let $A=\clotX[\set{b'_k}\sqcup\set{z}\sqcup\set{y_\ell}]$, and define $\lambda\in \TAB$ by 
$$ \lambda = \pmap{b'_k & z & y_\ell \\ b'_k & b_1 & c},$$
where $c$ can be taken in $\ClotX[\emptyset]$ whenever this is non-empty, and otherwise we can take $c$ to be any element in $\set{b'_k}\neq \emptyset$ since $s\geq e=1$ in that case.
Then $B\lambda\alpha = \clotX[\set{b'_k\alpha}]\neq \clotX[\set{b'_k\alpha}\sqcup\set{b_1\alpha}] = A\lambda\alpha$ so $\lambda\alpha\notin Q$ and $\dim(B\lambda\alpha) = \Card{K} = s$ as required.
\qed\end{proof}

We can now finally show the characterisation of the ideals in $\TAB$.

\begin{theorem}
The ideals of \TAB are precisely the sets $T_{r(S)} \cup K(S)$ and $T_{r(S)^+}\cup K(S)$ where $S$ is a non-empty subset of \TAB.
\end{theorem}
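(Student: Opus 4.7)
The plan is to leverage Lemma \ref{Trs union K(S) are ideals} for the easy direction and, for the converse, to realise any ideal $I$ of $\TAB$ in the form $T_{r(S)} \cup K(S)$ via the canonical choice $S = I$. The lemma already shows that both $T_{r(S)} \cup K(S)$ and $T_{r(S)^+} \cup K(S)$ are ideals for every non-empty $S \subseteq \TAB$, so all the work lies in showing that every ideal has one of these forms.

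Given any ideal $I$, I would first establish $K(I) = I$. The inclusion $I \subseteq K(I)$ is immediate from the trivial fact $\ker\alpha \subseteq \ker\alpha$ for each $\alpha \in I$. Conversely, any $\beta \in K(I)$ satisfies $\ker\alpha \subseteq \ker\beta$ for some $\alpha \in I$, so Proposition \ref{GR in TAB} yields some $\mu \in \TAB$ with $\beta = \alpha\mu$, and since $I$ is a right ideal this forces $\beta \in I$.

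The main content is then showing $T_{r(I)} \subseteq I$, which combined with $K(I) = I$ yields $I = T_{r(I)} \cup K(I)$. Take $\beta \in T_{r(I)}$, so $\rho(\beta) < r(I)$. By the minimality of $r(I)$ among cardinals $\kappa$ satisfying $\dim(B\alpha) < \kappa$ for every $\alpha \in I$, the cardinal $\rho(\beta)$ itself cannot serve as such an upper bound; hence there exists $\alpha \in I$ with $\rho(\beta) \leq \dim(B\alpha)$. Proposition \ref{GJ in TAB} then supplies $\lambda \in \TAB$ and $\mu \in \TAB^1$ with $\beta = \lambda\alpha\mu$, and since $I$ is a two-sided ideal we conclude $\beta \in I$.

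The only genuinely subtle point is the correct reading of the minimality defining $r(I)$: from $\rho(\beta) < r(I)$ one must extract an $\alpha \in I$ with $\rho(\beta) \leq \dim(B\alpha)$ so that Proposition \ref{GJ in TAB} becomes applicable. Note that although the theorem advertises two families, this approach recovers every ideal in the first form (taking $S = I$); the second form $T_{r(S)^+} \cup K(S)$ is simply an equivalent presentation — already verified to yield an ideal by Lemma \ref{Trs union K(S) are ideals} — that becomes convenient when one wants to pick $S$ so that the dimensions $\dim(B\alpha)$ fall strictly below some cardinal rather than attaining it.
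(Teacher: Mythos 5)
Your proof is correct, but it takes a genuinely different and noticeably shorter route than the paper. The paper handles $I=T_{e^+}$ separately and otherwise chooses $S=I\priv Q$; this forces it to first prove that $I\priv Q\nonempty$ (via Lemma \ref{non-reg map with smaller rank}) and then to run a case analysis on whether $I\cap Q$ contains a map $\beta$ with $\dim(B\beta)\geq r(S)$, which is precisely what produces the dichotomy between $T_{r(S)}\cup K(S)$ and $T_{r(S)^+}\cup K(S)$. You instead take $S=I$, and the two observations that make this collapse — $K(I)=I$ because Proposition \ref{GR in TAB} turns the kernel condition into a right-multiplication and $I$ is a right ideal, and $T_{r(I)}\subseteq I$ by the minimality of $r(I)$ combined with Proposition \ref{GJ in TAB} — are both sound; the second is in fact word-for-word the mechanism the paper itself uses to prove its own inclusion $T_r\cup K(S)\subseteq I$. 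Your reading of the minimality of $r(I)$ is the correct one: if $\dim(B\alpha)<\rho(\beta)$ held for every $\alpha\in I$ then $\rho(\beta)$ would witness $r(I)\leq\rho(\beta)$, contradicting $\rho(\beta)<r(I)$. What the paper's choice of $S$ buys is a smaller and more structurally meaningful generating set (the non-regular part of the ideal), which is what makes the second family $T_{r(S)^+}\cup K(S)$ genuinely necessary in its argument and connects to the subsequent remark about non-uniqueness of the presentation; what your choice buys is the stronger and cleaner conclusion that every ideal is already of the first form with $S=I$, with no case analysis and no appeal to Lemma \ref{non-reg map with smaller rank}.
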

\begin{proof}
Taking into account Lemma \ref{Trs union K(S) are ideals}, it suffices to show that for any ideal $I$ of \TAB, there exists a set $S$ such that $I=T_{r(S)} \cup K(S)$ or $I=T_{r(S)^+} \cup K(S)$.
So let $I$ be an ideal of \TAB.

If $I$ is the smallest ideal of \TAB, that is, if $I=T_{e^+}$, then we set $S=T_{e^+}$.
From the discussion preceding Lemma \ref{Trs union K(S) are ideals}, we obtain $I = T_{r(S)}\cup K(S)$ as required.

\smallskip
From now on, let us assume that $I\neq T_{e^+}$, and let $\alpha\in I\setminus T_{e^+}$.
Then $\rho(\alpha)>e$ and $\alpha$ has the property that there exist $z_1,z_2\in\im\alpha\subseteq B$ such that $z_1\notin \clotX[z_2]$.
If we can find two such elements with one of $z_1$ and $z_2$ not in $B\alpha$, then we have that $A\alpha \neq B\alpha$ and thus $\alpha\notin Q$.
On the contrary, if for any two elements $z_1$ and $z_2$ satisfying the property above, both lie in $B\alpha$, then we get that $\im\alpha = B\alpha$ and thus $\alpha\in Q$.
In this second case, since $\rho(\alpha)\geq e+1$ we call upon Lemma \ref{non-reg map with smaller rank} with $s=e$ to get that that there exists $\lambda\in \TAB$ such that $\lambda\alpha\notin Q$.
Moreover, since $I$ is an ideal, we also have that $\lambda\alpha\in I$.
Therefore in both cases, if we take $S=I\priv Q$ we have that $S \neq \emptyset$. 
Now by setting $r=r(S)$, we show that $I$ is equal to either $T_r\cup K(S)$ or $T_{r^+} \cup K(S)$.

Firstly, we have that $T_r\cup K(S)\subseteq I$.
Indeed, let $\beta\in K(S)$, so that $\beta = \alpha\mu$ for some $\alpha\in S\subseteq I$ and $\mu\in\TAB$, and thus $\beta\in I$.
Similarly, let $\beta\in T_r$. If $\rho(\beta) >\dim (B\alpha)$ for all $\alpha\in I$, then, in particular $\rho(\beta) > \dim (B\alpha)$ for all $\alpha\in S$, which contradicts the minimality of $r$ since $\rho(\beta) < r$.
Therefore, there exists $\alpha\in I$ such that $\rho(\beta) \leq \dim (B\alpha)$. 
By Proposition \ref{GJ in TAB}, this means that $\beta = \lambda\alpha\mu$ for some $\lambda\in\TAB$, $\mu\in\TAB^1$ which shows that $\beta\in I$. Therefore $T_r\cup K(S)\subseteq I$.

It is also clear that $I\priv Q = S\subseteq K(S)$.
Similarly, for any $\gamma\in I\cap Q$ such that $\dim(B\gamma) < r$, we have that $r>\dim(B\gamma) = \rho(\gamma)$ and therefore $\gamma\in T_r$.
We now need to distinguish between the case where $I = T_r\cup K(S)$ and $I=T_{r^+}\cup K(S)$ by looking at the possible values of $\dim (B\gamma)$ for $\gamma\in I\cap Q$.

On one hand, if $\dim(B\gamma) < r$ for all $\gamma\in I\cap Q$, then, using the argument in the previous paragraph, we have shown that $I = T_r\cup K(S)$.

On the other hand, assume that there exists at least one $\beta\in I\cap Q$ such that $\dim(B\beta)\geq r$ and set $\kappa = \rho(\beta) = \dim(B\beta)$. 
We show in this case that $I = T_{r^+}\cup K(S)$.
If $\kappa > r$, then using Lemma \ref{non-reg map with smaller rank} with $s=r$, we get that there exists $\lambda\in \TAB$ such that $\dim(B\lambda\beta) = r$ and $\lambda\beta\notin Q$. But then $\lambda\beta\in I\priv Q=S$ and this contradicts the definition of $r = r(S)$. 
Therefore we must have that $\kappa = r$ from which we have that $\beta\in T_{r^+}$.
This gives us that $I\cap Q\subseteq T_{r^+}$ and therefore $I\subseteq T_{r^+}\cup K(S)$.
In order to get the equality we only need to show that $T_{r^+}\subseteq I$ since we already know that $T_r\cup K(S)\subseteq I$.
For this, consider $\gamma\in T_{r^+}$. 
Since in the current case there exists a map $\beta\in I$ with $\dim(B\beta) = r$, we have that $\rho(\gamma)\leq r = \dim(B\beta)$.
Then by Proposition \ref{GJ in TAB}, $\gamma = \lambda\beta\mu$ for some $\lambda\in \TAB, \mu\in \TAB^1$ which shows that $\gamma\in I$ and gives us that $T_{r^+}\subseteq I$. 
Therefore, when there exists a map in $I\cap Q$ which rank is equal to $r$, we have that $I = T_{r^+}\cup K(S)$.
\qed\end{proof}

\begin{remark}
Notice that for two distinct sets $S$ and $S'$, one can have $T_{r(S)}\cup K(S) = T_{r(S')}\cup K(S')$.
Similarly, for an ideal $I$, there might exist sets $S$ and $S'$ such that $I=T_{r(S)}\cup K(S)$ and $I = T_{r(S')^+}\cup K(S')$. 
Indeed, if we take $I = T_k$, then we know that $I = T_{r(S)}\cup K(S)$ for $S = T_k$, but we can also obtain $I$ using the construction of the proof with the set $S'=I\priv Q$.
More precisely, if $k$ is finite, then we have that $r(S') = k-1$, and there exist many elements $\gamma\in T_k$ for which $\dim (B\gamma) \geq k-1$, namely all idempotents on subalgebras of rank $k-1$. Thus we have that $T_k = I = T_{r(S')^+}\cup K(S')$.
On the other hand, if $k$ is infinite, then we directly have that $r(S')=k$ and that all elements of $I$ are such that $\dim (B\gamma)\leq \rho(\gamma)<k$, which gives us that $T_k = I=T_{r(S')}\cup K(S')$ in this case.
\end{remark}

Using the theorem above, we can give examples of the construction of two ideals that are not comparable under inclusion, as long as we are not in the case where the algebra $\AlgA$ is a set with $3$ elements and its subalgebra $\AlgB$ has dimension exactly $2$. 
In that specific case, the $8$ elements of $\TAB$ form three ideals $T_2$, $T_2\cup \set{\alpha\in \Qc\mid \rho(\alpha)=2}$ and $\TAB$ itself, which make up a chain.

\begin{example}
Let us assume that there exist independent elements $b_1, b_2\in B$, that $\AlgA$ does not have any constants and that $B$ has codimension at least $2$ in $A$. 
Let $B=\clotX[\set{b_1,b_2}\sqcup\set{d_j}]$ and $A=\clotX[\set{b_1,b_2}\sqcup\set{d_j}\sqcup\set{x_1,x_2}\sqcup\set{y_k}]$.
Define the following maps in \TAB:
$$ \alpha = \pmap{x_1 & \set{b_i,d_j,x_2,y_k} \\ b_1 & b_2} \quad \text{and}\quad 
\beta = \pmap{x_2 & \set{b_i,d_j,x_1,y_k} \\ b_1 & b_2}. $$
From this we have that $\rho(\alpha) = \rho(\beta)= 2$, $\dim(B\alpha)=\dim(B\beta)=1$ and $(x_1,b_1)\in\ker\beta\priv\ker\alpha$ while $(x_2,b_1)\in\ker\alpha\priv\ker\beta$, which shows that $\ker\alpha$ and $\ker\beta$ are incomparable.
Since $r(\set{\alpha})=r(\set{\beta})=2$, we let $I_\alpha = T_2 \cup K(\set{\alpha})$ and $I_\beta = T_2 \cup K(\set{\beta})$, which are both ideals from the previous theorem.
Now it is easy to see that $\alpha\in I_\alpha\priv I_\beta$ and $\beta\in I_\beta\priv I_\alpha$, which shows that these two ideals are incomparable.

A similar trick can be used with $b_2$ a constant, and another small modification can be used to find non comparable ideals when $B$ contains at least three independent elements, or two independent elements and a constant.
\end{example}

\section{Extended Green's relations}\label{section on extended Greens}

The study of extended Green's relations was introduced by Pastjin in \cite{Pj} and extended by El-Qallali \cite{ElQallali} and Lawson \cite{Lawson}, and, since then, they have been widely used in order to generalise the notion of regular semigroups to abundant, Fountain and $U$-semiabundant semigroups.
For $a, b\in S$, these relations are given as follows:
$$
\begin{gathered}
a\GLs b \Longleftrightarrow \left( ax=ay \Leftrightarrow bx = by \quad \forall x,y\in S^1 \right), \\
a\GRs b \Longleftrightarrow \left( xa=ya \Leftrightarrow xb = yb \quad \forall x,y\in S^1  \right), \\
a\GLt b \Longleftrightarrow \Big( af=a \Leftrightarrow bf = b \quad \forall f\in E(S) \Big), \\
a\GRt b \Longleftrightarrow \Big(fa=a \Leftrightarrow fb = b \quad \forall f\in E(S) \Big),\\
\GHs = \GLs \wedge \GRs, \quad \phantom{and} \quad \GHt = \GLt \wedge \GRt, \\
\GDs = \GLs \vee \GRs, \quad \text{and} \quad \GDt = \GLt \vee \GRt.
\end{gathered}
$$
Of course, there are analogues extended relations $\GJs$ and $\GJt$ of Green's relation $\GJ$ but we delay their description which is less straightforward until these relations are needed.
Also, the description of the relations $\GHs$ and $\GHt$ will not be given as they can be easily deduced from the descriptions of the appropriate $*$ and $\sim$-relations.
A well-known consequence of these definitions (see \cite{Fountain-abundant}) is that two elements are $\GRs$ related in a semigroup $S$ if and only if they are $\GR$ related in an oversemigroup $T$, and dually for $\GLs$.
Also it is easy to see that by definition, $\GLs$ and $\GRs$ are right, respectively left, congruences and that we have the inclusions $\GR \subseteq \GRs \subseteq \GRt$ and $\GL \subseteq \GLs \subseteq \GLt$.
These inclusions may be strict but it is well-known that they become equalities when we restrict our attention to regular elements as given in the next lemma (the proof of which is only included for completeness purposes).

\begin{lemma}[\cite{ElQallali}]\label{greens in reg smgrp}
In a semigroup $S$, if $a$ and $b$ are both regular elements, then $a\GR b\Leftrightarrow a\GRt b$ and $a\GL b\Leftrightarrow a\GLt b$.
\end{lemma}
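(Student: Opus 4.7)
The plan is to observe that one direction of each equivalence is already an instance of the universal inclusions $\GR \subseteq \GRs \subseteq \GRt$ and $\GL \subseteq \GLs \subseteq \GLt$ noted just before the lemma. So only the reverse implications $\GRt \Rightarrow \GR$ and $\GLt \Rightarrow \GL$ (when $a,b$ are regular) need proof, and by the usual left/right duality it is enough to handle the $\GR$ version.

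For this, I would exploit regularity to manufacture the idempotents that the $\sim$-relation needs as test elements. Suppose $a \GRt b$ with both $a$ and $b$ regular, and choose $a', b' \in S$ such that $a = aa'a$ and $b = bb'b$. Then $e = aa'$ and $f = bb'$ are idempotents of $S$ satisfying $ea = a$ and $fb = b$. Applying the defining property of $\GRt$ to $e$ together with $ea = a$ forces $eb = b$, that is, $b = aa'b \in aS$; symmetrically, applying it to $f$ with $fb = b$ forces $fa = a$, so $a = bb'a \in bS$.

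From $b \in aS \subseteq aS^1$ we get $bS^1 \subseteq aS^1$, and from $a \in bS \subseteq bS^1$ we get the reverse inclusion, so $aS^1 = bS^1$, i.e., $a \GR b$. The dual argument with the idempotents $a'a$ and $b'b$ acting on the right yields $a \GLt b \Rightarrow a \GL b$, completing the lemma.

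There is no genuine obstacle here: the proof is a direct unpacking of definitions, and the only small point worth highlighting in the write-up is that regularity is used in exactly one place, namely to produce the idempotent witnesses $aa'$ and $bb'$ that make the $\sim$-relations collapse back to the classical Green's relations.
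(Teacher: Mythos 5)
Your proposal is correct and follows essentially the same route as the paper: one direction comes from the inclusions $\GR\subseteq\GRs\subseteq\GRt$, and for the converse you use regularity to produce the idempotents $aa'$ and $bb'$ and feed them into the defining property of $\GRt$ to get $a=bb'a$ and $b=aa'b$, with the dual argument for $\GL$. No gaps.
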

\begin{proof}
By the remark above, we only need to prove the implication from right to left.
So assume that $a$ and $b$ are regular elements such that $a\GRt b$. 
Then there exist $a', b'\in S$ such that $a = aa'a$ and $b = bb'b$.
Thus $aa'$ and $bb'$ are idempotents and by the definition of being $\GRt$ related, we get that $a = bb'a$ and $b = aa'b$, and thus $a\GR b$.
A dual argument shows that the similar result holds for $\GLt$.
\qed\end{proof}

As a direct consequence of this lemma, we have that the relations $\GRQ$, $\GRs[Q]$, and $\GRt[Q]$ are equal (and similarly $\GLQ=\GLs[Q]=\GLt[Q]$), but this is not true on the whole of $\TAB$. 
Indeed, the following propositions shows that on $\TAB$ the relations $\GLs$ and $\GLt$ are equal but they differ from $\GL$, while the relations $\GR$, $\GRs$ and $\GRt$ are all distinct.

\begin{proposition}\label{GLt in TAB}
In $\TAB$ we have that $\alpha\GLt\beta$ if and only if $\im\alpha = \im\beta$. Thus $\GLs = \GLt$ in $\TAB$.
\end{proposition}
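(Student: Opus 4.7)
The plan is to establish the cycle
\[ \im\alpha = \im\beta \implies \alpha\GLs\beta \implies \alpha\GLt\beta \implies \im\alpha = \im\beta, \]
which simultaneously yields the characterisation of $\GLt$ and the equality $\GLs=\GLt$, because the middle implication is nothing more than the general inclusion $\GLs\subseteq\GLt$ recalled before the proposition.

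The first implication falls out of the definition: for any $x,y\in\TAB^1$, the equation $\alpha x=\alpha y$ says precisely that $x$ and $y$ agree pointwise on the set $A\alpha=\im\alpha$; under the hypothesis this is the same as saying that they agree on $\im\beta$, i.e., $\beta x=\beta y$. Hence $\alpha\GLs\beta$ directly from the definition of $\GLs$.

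All the substance is in the third implication, which I prove by contraposition. Suppose $\im\alpha\neq\im\beta$ and, swapping the roles of $\alpha$ and $\beta$ if necessary, pick $c\in\im\alpha\priv\im\beta$. The strategy is to produce an idempotent $f\in\TAB$ with $\im f=\im\beta$; once this is in hand, $\beta f=\beta$ follows from $\im\beta\subseteq\im f$, while choosing $a\in A$ with $a\alpha=c$ gives $a\alpha f=cf\in\im f=\im\beta$ whereas $c\notin\im\beta$, so $\alpha f\neq\alpha$ and $\alpha,\beta$ cannot be $\GLt$-related. The only point that deserves care, and the step I would flag as the crux, is the construction of $f$. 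It follows the standard recipe for independence algebras: take a basis $\{c_i\}$ of $\im\beta$, fix some $c_{i_0}\in\im\beta$ (possible since $\AlgB$, and hence $\im\beta$, is non-empty), invoke the exchange property to extend to a basis $\{c_i\}\sqcup\{a_j\}$ of $A$ (the set $\{a_j\}$ is non-empty since $\im\beta\subseteq\AlgB\subsetneq\AlgA$), and use the free basis property to define $f$ on the basis by $c_i\mapsto c_i$ and $a_j\mapsto c_{i_0}$. One then checks directly on basis elements that $f^2=f$ and $\im f=\clotX[\{c_i\}]=\im\beta\subseteq B$, so $f\in\TAB$ is the required idempotent.
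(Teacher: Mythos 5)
Your proof is correct, and its substance coincides with the paper's: the decisive ingredient in both is the construction of an idempotent of $\TAB$ with prescribed image (take a basis of the image, extend it to a basis of $A$ by the exchange property, and send the new basis elements back into the image via the free basis property), together with the observation that an idempotent acts as a right identity precisely on the maps whose image it contains. The differences are organisational. The paper proves $\alpha\GLt\beta\Leftrightarrow\im\alpha=\im\beta$ directly in both directions --- using two idempotents, one for each image, where you use a single idempotent and argue by contraposition --- and then obtains $\GLs=\GLt$ by sandwiching $\{(\alpha,\beta)\mid\im\alpha=\im\beta\}\subseteq\GLs\subseteq\GLt$, the first containment coming from the description of $\GL$ on the oversemigroup $\EndA$ and the general fact that being $\GL$-related in an oversemigroup implies being $\GLs$-related in the subsemigroup. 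Your cycle replaces that appeal with a direct, self-contained verification that equality of images forces $\alpha\GLs\beta$ (two elements of $\TAB^1$ are right-equalised by $\alpha$ exactly when they agree pointwise on $\im\alpha$), which is slightly more elementary and makes the backward implication of the $\GLt$ characterisation come for free; otherwise the two arguments deliver the same content.
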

\begin{proof}
Let $\alpha \in \TAB$ and $\eta\in E$. 
If $\alpha\eta = \alpha$, then it is clear that $\im\alpha \subseteq \im\eta$.
Conversely, assume that $\im\alpha \subseteq\im\eta$. Since $\eta$ is an idempotent, it follows that $\eta\lvert_{\im\eta} = \id_{\im\eta}$ and then $\eta\lvert_{\im\alpha} = \id_{\im\alpha}$ which shows that $\alpha\eta = \alpha$.
Therefore for any $\alpha\in\TAB$, $\eta\in E$, we have that $\alpha\eta = \alpha$ if and only if $\im\alpha\subseteq\im\eta$.

Let $\alpha,\beta \in \TAB$ and assume that $\alpha\GLt\beta$. 
Now let $\im\alpha = \clotX[\set{x_i\alpha}]$, $\im\beta = \clotX[\set{y_k\beta}]$ and $A = \clotX[\{x_i\alpha\}\sqcup \{x_j\}] = \clotX[\{y_k\beta\}\sqcup \{y_\ell\}]$. Define idempotents $\eta$ and $\theta$ as follows:
$$\eta = \pmap{x_i\alpha & x_j\\ x_i\alpha & \tilde{x}} \qquad \theta = \pmap{y_k\beta & y_\ell \\ y_k\beta & \tilde{y}},$$
for some elements $\tilde{x}\in \clotX[x_i\alpha]$ and $\tilde{y}\in\clotX[y_k\beta]$.
It is clear that $\im\eta = \im\alpha$ and $\im\theta = \im\beta$ which gives us that $\alpha\eta = \alpha$ and $\beta\theta = \beta$.
Since $\alpha\GLt\beta$, it follows that $\alpha\theta = \alpha$ and $\beta\eta = \beta$. 
Hence, by the equivalence previously proved, we have that $\im\alpha \subseteq \im\theta = \im\beta$ and $\im\beta \subseteq \im\eta = \im\alpha$.
Therefore $\im\alpha = \im\beta$.
Conversely, it can easily be seen that for any $\alpha, \beta\in \TAB$ with $\im\alpha = \im\beta$, any idempotent $\eta$ satisfying $\alpha\eta = \alpha$ will be such that $\im\alpha \subseteq \im\eta$ and thus $\im\beta\subseteq \im\eta$ and $\beta\eta = \beta$, so $\alpha\GLt\beta$.

Finally, since $\alpha\GLE\beta$ implies $\alpha\GLs\beta$, we get from the description of $\GLE$ the following inclusions:
$$\{(\alpha,\beta)\mid \im\alpha = \im\beta\} \subseteq \GLs\subseteq \GLt = \{(\alpha,\beta)\mid \im\alpha = \im\beta\}, $$
which forces the equalities, and so $\GLs = \GLt$.
\qed\end{proof}

Whereas the description of $\GLs$ and $\GLt$ follows from the description of $\GLQ$, the same cannot be said for $\GRs$ and $\GRt$ and their explicit descriptions are given by Propositions \ref{GRs in TAB} and \ref{GRt in TAB}.

\begin{proposition}\label{GRs in TAB}
In $\TAB$, we have that $\alpha\GRs \beta$ if and only if one of the following occurs:
\begin{enumerate}
    \item $\ker\alpha = \ker\beta$ and $\alpha, \beta\in Q$;
    \item $\ker\alpha \cap (B\times B) = \ker\beta \cap (B\times B)$ and $\alpha,\beta \notin Q$.
\end{enumerate}
\end{proposition}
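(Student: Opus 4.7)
The plan is to first establish that regularity is a class invariant for $\GRs$, and then handle the regular and non-regular cases separately. For the invariance, suppose $\alpha \GRs \beta$ with $\alpha \in Q$. By Lemma \ref{Regular maps factorised by idp} there exists $\gamma \in \TAB$ with $\gamma\alpha = \alpha$; reading this as $\gamma\alpha = 1\cdot\alpha$ in $\TAB^1$, the defining property of $\GRs$ forces $\gamma\beta = \beta$, hence $\beta \in Q$ by the same lemma. Thus $\GRs$-related pairs sit jointly in $Q$ or jointly in $\Qc$.

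In the regular case, Lemma \ref{greens in reg smgrp} gives $\GR = \GRs$ on $Q$, and Proposition \ref{GR in TAB} identifies this with $\ker\alpha = \ker\beta$, yielding condition (1). For the backward implication in the non-regular case, assume $\ker\alpha \cap (B\times B) = \ker\beta \cap (B\times B)$ and let $\mu, \nu \in \TAB^1$ with $\mu\alpha = \nu\alpha$. If exactly one of them equals the adjoined identity, then $\alpha = \lambda\alpha$ for some $\lambda \in \TAB$, placing $\alpha \in Q$ by Lemma \ref{Regular maps factorised by idp} and producing a contradiction; if both equal $1$ the conclusion is trivial; otherwise $\mu, \nu \in \TAB$ have images inside $B$, so each pair $(a\mu, a\nu)$ with $a \in A$ lies in $\ker\alpha \cap (B\times B) = \ker\beta \cap (B\times B)$, giving $\mu\beta = \nu\beta$, and symmetry closes the direction.

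The forward implication in the non-regular case carries the real content. Given $(b_1, b_2) \in \ker\alpha \cap (B\times B)$, I will build two witness endomorphisms that separate $b_1$ and $b_2$ but agree after composition with $\alpha$. Fix a basis $X$ of $A$, choose a distinguished $x_0 \in X$ and an arbitrary $c \in B$, and use the free basis property to define $\mu, \nu \in \TAB$ by $x_0\mu = b_1$, $x_0\nu = b_2$, and $x\mu = x\nu = c$ for all other $x \in X$. For $a \in A$ written as $a = t(\bar{x})$ with $\bar{x} \subseteq X$, the images $a\mu$ and $a\nu$ differ only by exchanging $b_1$ for $b_2$ in the slots occupied by $x_0$; since $b_1\alpha = b_2\alpha$, applying $\alpha$ yields identical outputs, whence $\mu\alpha = \nu\alpha$. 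Invoking $\alpha \GRs \beta$ then gives $\mu\beta = \nu\beta$, and evaluating at $x_0$ delivers $b_1\beta = b_2\beta$; symmetry completes condition (2).

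The main obstacle is this final construction: the care needed is to guarantee $\mu, \nu \in \TAB$ by sending every basis element into $B$, and to check that the single-point modification still produces $\mu\alpha = \nu\alpha$ despite $x_0\mu \neq x_0\nu$, which is exactly where the hypothesis $b_1\alpha = b_2\alpha$ is exploited. Everything else reduces to direct applications of Lemma \ref{Regular maps factorised by idp}, Lemma \ref{greens in reg smgrp}, and Proposition \ref{GR in TAB}.
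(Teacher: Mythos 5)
Your proposal is correct and follows essentially the same route as the paper: establish that $Q$ is a union of $\GRs$-classes via Lemma \ref{Regular maps factorised by idp}, dispatch the regular case with Lemma \ref{greens in reg smgrp} and Proposition \ref{GR in TAB}, and prove the non-regular case by constructing two witness maps that differ on a single basis element sent to $b_1$ and $b_2$ respectively. The only cosmetic difference is that the paper's witnesses fix a basis of $B$ pointwise and vary a basis element outside $B$, whereas you vary an arbitrary distinguished basis element and send the rest to a single $c\in B$; both constructions work for the same reason.
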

\begin{proof}
Let $\alpha\in Q$, $\beta\in \TAB$ and assume $\alpha\GRs\beta$. 
From Lemma \ref{Regular maps factorised by idp}, we know that there exists $\gamma\in \TAB$ such that $\alpha=\gamma\alpha$ and therefore $\beta = \gamma\beta$, so $\gamma$ is a left-identity for $\beta$ and thus from Lemma \ref{Regular maps factorised by idp} again, we have that $\beta\in Q$.
Since the case for $Q\times Q$ is given by Lemma \ref{greens in reg smgrp}, we only need to consider the case of two non-regular elements.

Notice that for any map $\alpha\in\TAB$, if $\gamma\alpha = \delta\alpha$ with $\gamma \neq \delta = 1$, it follows that $A\alpha=A\gamma\alpha \subseteq B\alpha$ and then $\alpha\in Q$.
Therefore in what follows, either $\gamma = \delta = 1$ or $\gamma, \delta \neq 1$.

Now, let $\alpha,\beta\in \Qc$ and assume that $\alpha\GRs\beta$, that is, $\gamma\alpha = \delta\alpha$ if and only if $\gamma\beta = \delta\beta$ for $\gamma, \delta \in \TAB^1$. 
Then for any pair $(b_1, b_2)\in \ker\alpha\cap (B\times B)$ with $b_1\neq b_2$, we construct specific maps $\gamma, \delta\in \TAB$ satisfying the relation $\gamma\alpha = \delta\alpha$. 
To this end, let $B = \clotX[\set{y_k}]$ and $A = \clotX[\{y_k\}\sqcup \{u\}\sqcup \{x_j\}]$ and define $\gamma, \delta \in \TAB$ by:
$$ \gamma = \pmap{y_k & u & x_j \\ y_k & b_1 & y_1}  \quad \text{and} \quad 
\delta = \pmap{y_k & u & x_j \\ y_k & b_2 & y_1}.$$
It is clear that $\gamma\alpha = \delta\alpha$, and since $\alpha\GRs\beta$, it follows that $\gamma\beta = \delta\beta$.
Therefore $b_1\beta = u\gamma\beta = u\delta\beta = b_2\beta$ and thus $(b_1,b_2)\in \ker\beta\cap(B\times B)$.
Using the same argument interchanging the roles of $\alpha$ and $\beta$, we deduce that $\ker\alpha \cap (B\times B) = \ker\beta \cap (B\times B)$.

Conversely, assume that $\ker\alpha \cap (B\times B) = \ker\beta \cap (B\times B)$ and that $\gamma\alpha = \delta\alpha$ for some $\gamma, \delta\in \TAB$. 
Then for any $a\in A$ we have $a\gamma\alpha = a\delta\alpha$ from which $(a\gamma, a\delta)\in \ker\alpha$. 
But since $a\gamma, a\delta\in B$, it follows that $(a\gamma, a\delta)\in \ker\beta$. 
Therefore $a\gamma\beta = a\delta\beta$, and since this is true for any $a\in A$ we get that $\gamma\beta = \delta\beta$.
Interchanging the roles of $\alpha$ and $\beta$, we obtain that $\gamma\alpha = \delta\alpha$ if and only if $\gamma\beta = \delta\beta$ for any $\gamma, \delta\in \TAB$. 
Since this equivalence also holds when $\gamma = \delta = 1$ together with the note made earlier in the proof, we conclude that $\alpha\GRs\beta$ as required.
\qed\end{proof}

\begin{proposition}\label{GRt in TAB}
In $\TAB$, we have that $\alpha\GRt\beta$ if and only if $\alpha, \beta\in \Qc$ or ($\ker\alpha = \ker\beta$ and $\alpha,\beta \in Q$).
\end{proposition}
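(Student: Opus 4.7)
The plan hinges on the observation, provided by Lemma \ref{Regular maps factorised by idp}, that a map $\alpha\in\TAB$ is regular if and only if there exists some $\gamma\in\TAB$ with $\gamma\alpha=\alpha$. In particular, if $\alpha$ is not regular then no element of $\TAB$, and in particular no idempotent, can act as a left-identity on $\alpha$. This single fact essentially collapses all the work needed for the non-regular half of the statement.

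First, I would exploit the above to show that any two non-regular maps $\alpha,\beta\in\Qc$ are automatically $\GRt$-related. Indeed, the set $\{\eta\in E(\TAB) \mid \eta\alpha=\alpha\}$ is empty for $\alpha\in\Qc$, and likewise for $\beta$, so the biconditional ``$\eta\alpha=\alpha \Leftrightarrow \eta\beta=\beta$'' is vacuously true for every idempotent $\eta$.

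Second, I would verify that $\GRt$ cannot mix regular and non-regular maps. If $\alpha\in Q$ and $\alpha\GRt\beta$, then by Lemma \ref{Regular maps factorised by idp} there is an idempotent $\eta\in\TAB$ with $\eta\alpha=\alpha$, so the $\GRt$ relation forces $\eta\beta=\beta$, and applying the same lemma in reverse gives $\beta\in Q$.

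Finally, for the case $\alpha,\beta\in Q$, I would simply appeal to Lemma \ref{greens in reg smgrp}, which tells us that on regular elements $\GRt$ coincides with $\GR$, together with Proposition \ref{GR in TAB}, which characterises $\GR$ on $\TAB$ by equality of kernels. Putting the three cases together yields the stated equivalence. No step is genuinely delicate here; the main subtlety, if any, is simply recognising from the outset that Lemma \ref{Regular maps factorised by idp} makes the non-regular case vacuous rather than requiring a separate construction.
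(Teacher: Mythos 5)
Your proposal is correct and follows essentially the same route as the paper: the paper's proof likewise combines Lemma \ref{Regular maps factorised by idp} (regularity is equivalent to admitting an idempotent left identity, so the $\GRt$ condition is vacuous between non-regular maps and separates regular from non-regular) with Lemma \ref{greens in reg smgrp} and the kernel description of $\GR$ on the regular part. You have merely spelled out the three cases that the paper compresses into two sentences.
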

\begin{proof}
From Lemma \ref{Regular maps factorised by idp} we know that $\alpha = \eta\alpha$ where $\eta\in E$ if and only if $\alpha$ is regular.
Using this and Lemma \ref{greens in reg smgrp}, the proposition follows easily, since regular maps can only be $\GRt$ related to regular maps, and for two non-regular maps the condition in the definition of $\GRt$ is trivially realised.
\qed\end{proof}

In order to compare kernels the following lemma will come in handy:
\begin{lemma}\label{lem kernel equalities by map description}
Let $B = \clotX[\set{b_k}\sqcup\set{b_i}]$ and $A=B\sqcup \clotX[\set{a_j}]$. Suppose that $\gamma, \delta\in\TAB$ can be expressed as follows:
$$ \gamma = \pmap{b_k & b_i & a_j \\ f_k & t_i(\ob{f_k}) & g_j} \qquad \text{and} \qquad \delta = \pmap{b_k & b_i & a_j \\ x_k & t_i(\ob{x_k}) & y_j},$$
for some terms $t_i$, where the sets $\set{f_k}$ and $\set{x_k}$ are respective bases of $\im(B\gamma)$ and $\im(B\delta)$.
Then we have that $\ker\gamma \cap (B\times B) = \ker\delta\cap (B\times B)$. Consequently, if $\gamma, \delta\in\Qc$ we also obtain that $\gamma\GRs\delta$.
\end{lemma}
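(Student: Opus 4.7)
The aim is to show that the restrictions $\gamma|_B$ and $\delta|_B$ have exactly the same kernel. The heuristic is that both maps are obtained by the same universal substitution pattern on the basis of $B$, and that they differ only in which independent set ($\set{f_k}$ or $\set{x_k}$) plays the role of the target basis. Since independent sets generate isomorphic subalgebras, this will force the two restrictions to be kernel-equivalent.

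First, I would pick an arbitrary element $b\in B$, write $b=s(\ob{b_k},\ob{b_i})$ for some term $s$ in the basis $\set{b_k}\sqcup\set{b_i}$ of $B$, and compute
\[
b\gamma = s(\ob{b_k\gamma},\ob{b_i\gamma}) = s(\ob{f_k},\ob{t_i(\ob{f_k})}).
\]
Because each $t_i$ is itself a term in the $f_k$'s, this composite expression can be rewritten as a single term $\tilde{s}(\ob{f_k})$ in the $f_k$'s alone, where the term $\tilde{s}$ depends only on $s$ and the $t_i$'s, \emph{not} on the particular choice of images $\set{f_k}$. Running the same computation on $\delta$ gives $b\delta=\tilde{s}(\ob{x_k})$, with exactly the same term $\tilde{s}$.

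Now for two elements $b,b'\in B$ with corresponding terms $\tilde{s},\tilde{u}$, I want to argue that
\[
b\gamma = b'\gamma \iff \tilde{s}(\ob{f_k})=\tilde{u}(\ob{f_k}) \iff \tilde{s}(\ob{x_k})=\tilde{u}(\ob{x_k}) \iff b\delta=b'\delta.
\]
The middle equivalence is the key step: since both $\set{f_k}$ and $\set{x_k}$ are independent subsets of $A$, the free basis property applied to the subalgebras they generate yields an isomorphism $\phi\colon\clotX[\set{f_k}]\to\clotX[\set{x_k}]$ sending $f_k\mapsto x_k$ (this uses Lemma \ref{subalgebras of same rank are isomorphic} together with the fact that homomorphisms between independence algebras preserve the structure of term operations). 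Because $\phi$ is a bijective homomorphism, it respects equality of term expressions, giving the middle equivalence. This yields $\ker\gamma\cap(B\times B)=\ker\delta\cap(B\times B)$.

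Finally, the consequence is immediate from Proposition \ref{GRs in TAB}: if both $\gamma$ and $\delta$ lie in $\Qc$, then equality of the kernels restricted to $B\times B$ is exactly the criterion for $\gamma\GRs\delta$. The only subtle point in the whole argument is the bookkeeping needed to justify that the rewriting term $\tilde{s}$ really is the same for $\gamma$ and $\delta$; this is where writing the $t_i$'s explicitly as shared in both matrix presentations pays off.
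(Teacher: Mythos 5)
Your proof is correct and follows essentially the same route as the paper: both arguments hinge on the independence of $\set{f_k}$ and $\set{x_k}$ to produce a structure-preserving map sending $f_k\mapsto x_k$ that transports the shared term identities from one presentation to the other, and then invoke Proposition \ref{GRs in TAB} for the consequence. The only cosmetic difference is that you use a single isomorphism between the subalgebras $\clotX[\set{f_k}]$ and $\clotX[\set{x_k}]$ and obtain both inclusions at once, whereas the paper extends $\set{f_k}$ to a basis of $A$, defines an endomorphism $\eta\in\TAB$ with $f_k\eta=x_k$, and proves each inclusion separately with its own such map.
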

\begin{proof}
Let us assume that the maps $\gamma, \delta\in\TAB$ are given as above. 
Let $(a, b)\in \ker\gamma\cap (B\times B)$. Then there exist terms $u$ and $v$ such that $a = u(\ob{b_k}, \ob{b_i})$ and $b = v(\ob{b_k}, \ob{b_i})$. 
Then we have that $u(\ob{f_k}, \ob{t_i(\ob{f_k})}) = a\gamma = b\gamma = v(\ob{f_k}, \ob{t_i(\ob{f_k})})$. 
Now we extend $\set{f_k}$ to a basis of $A$ through $\set{d_m}$, and we define $\eta =  \pmap{f_k & d_m \\ x_k & x_1}\in \TAB$.
From this we get the following:
\begin{align*}
a\delta &= u(\ob{b_k}, \ob{b_i})\delta = u(\ob{x_k}, \ob{t_i(\ob{x_k}}) 
= u(\ob{f_k\eta}, \ob{t_i(\ob{f_k\eta})}) = u(\ob{f_k}, \ob{t_i(\ob{f_k})})\eta = a\gamma\eta \\
&= b\gamma\eta =  v(\ob{f_k}, \ob{t_i(\ob{f_k})})\eta = v(\ob{x_k}, \ob{t_i(\ob{x_k}}) 
= b\delta,
\end{align*}
which means that $(a,b)\in\ker\beta\cap (B\times B)$. 
Hence $\ker\gamma\cap(B\times B)\subseteq \ker\delta\cap (B\times B)$.
The converse inclusion works similarly by using the map $\theta =  \pmap{x_k & e_m \\ f_k & f_1}\in \TAB$.
Therefore we have that $\ker\gamma \cap (B\times B) = \ker\delta\cap (B\times B)$. 
Moreover, if $\gamma, \delta\in\Qc$ then Proposition \ref{GRs in TAB} allows us to conclude that $\gamma\GRs\delta$.
\qed
\end{proof}

Recall from the introduction that a semigroup is called \emph{left abundant} [resp. \emph{left Fountain}] if each $\GRs$-class [resp. each $\GRt$-class] contains an idempotent, and dually for the right-handed notion using $\GLs$ and $\GLt$.
From the description of our extended Green's relations, we can see that $\TAB$ is right abundant (and thus right Fountain) but is not left abundant nor left Fountain since no idempotent lies in the $\GRs$ or $\GRt$-class of an element in $\Qc$.

As noticed at the beginning of this section $\GLs$ and $\GRs$ are respectively right and left congruences, and thus $\GLt$ is also a right congruence. 
However, it is not always true that the same holds for $\GRt$, as is shown by the following lemma.

\begin{lemma}\label{GRt congruence cases}
The equivalence $\GRt$ is a left congruence if and only if one of the following occurs:
\begin{itemize}
    \item $\dim B = 2$ and one-dimensional subalgebras are singletons; or
    \item $\dim B=1$ and $\left\lvert \Csts\right\rvert = 1$.
\end{itemize}
\end{lemma}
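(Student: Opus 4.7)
The plan is to isolate where $\GRt$-equivalence can fail to be preserved under left multiplication, and then show that the two given conditions on $\AlgB$ are exactly what prevents such failures. By Proposition \ref{GRt in TAB}, a $\GRt$-class of $\TAB$ is either a singleton-kernel class within $Q$ or the whole of $\Qc$. The regular pairs are automatically well-behaved: if $\alpha,\beta \in Q$ with $\ker\alpha = \ker\beta$, then for any $\gamma \in \TAB$, Lemma \ref{condition product to be regular} shows that $\gamma\alpha \in Q$ if and only if $\gamma\beta \in Q$ (the criterion depends only on $\ker(\cdot)\cap(B\times B)$), and directly $\ker(\gamma\alpha) = \ker(\gamma\beta)$. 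So the only obstacle to $\GRt$ being a left congruence lies in pairs $\alpha,\beta\in\Qc$, and the task reduces to determining when $\gamma\alpha \GRt \gamma\beta$ for all such $\alpha,\beta,\gamma$.

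For the sufficiency direction, I would analyse the structure of $\Qc$ under each condition. When $\dim B = 1$ and $|\Csts|=1$, Lemma \ref{lem rank of restriction wrt finiteness} forces any $\alpha\in\Qc$ to satisfy $B\alpha = \Csts = \set{c}$. When $\dim B = 2$ with singleton one-dimensional subalgebras (which in particular forces $\Csts = \emptyset$), a similar dimension count shows that any $\alpha\in\Qc$ maps $B$ onto a singleton subalgebra $\set{b}$. In either case, for any $\gamma\in\TAB$ and $\alpha\in\Qc$, since $A\gamma\subseteq B$ the composition $\gamma\alpha$ is a constant map. Hence for $\alpha,\beta\in\Qc$, both $\gamma\alpha$ and $\gamma\beta$ are constants, and in particular lie in $Q$ with the total kernel $A\times A$, so $\gamma\alpha\GR\gamma\beta$ and thus $\gamma\alpha\GRt\gamma\beta$.

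For the necessity direction, I would assume neither condition holds and split into three cases, producing in each a pair $\alpha,\beta\in\Qc$ and some $\gamma\in\TAB$ with $\gamma\alpha\not\GRt\gamma\beta$. For $\dim B \geq 3$, given basis elements $b_1, b_2, b_3$ of $B$ and some $a\in A\priv B$, the maps
\begin{equation*}
\alpha = \pmap{a & b_1 & b_2 & b_3 \\ b_1 & b_2 & b_2 & b_3}, \quad \beta = \pmap{a & b_1 & b_2 & b_3 \\ b_1 & b_2 & b_3 & b_3}
\end{equation*}
(identity on any remaining basis elements) both lie in $\Qc$, and letting $\gamma$ swap $a$ with $b_2$ while acting as the identity otherwise yields $\gamma\alpha,\gamma\beta\in Q$ with distinct kernels. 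For $\dim B = 2$ with one-dimensional subalgebras not all singletons (so either $\Csts\neq\emptyset$ or a non-trivial unary term exists on basis elements), I use the extra element inside a non-singleton one-dimensional subalgebra to build $\alpha,\beta\in\Qc$ whose compositions with a suitably chosen $\gamma$ land in $Q$ with differing kernels. For $\dim B = 1$ with $|\Csts|\geq 2$ (the case $\Csts=\emptyset$ already yielding a regular $\TAB$ and therefore excluded), picking two distinct constants $c_1,c_2$ and defining $\alpha,\beta\in\Qc$ by $b\alpha=c_1$, $b\beta=c_2$ (with $a\mapsto b$ for some $a\in A\priv B$) produces compositions $\gamma\alpha,\gamma\beta$ whose kernels lump $a$ with $c_1$ in one case and with $c_2$ in the other.

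The main obstacle is the $\dim B = 2$ sub-case of necessity, where the counterexample must exploit the precise algebraic reason that one-dimensional subalgebras may fail to be singletons (either the presence of constants, or the existence of a non-trivial unary term). The other two failing cases are handled by rank-based pigeonhole-style constructions, and the sufficiency direction is a direct dimension count once Lemma \ref{lem rank of restriction wrt finiteness} has been invoked.
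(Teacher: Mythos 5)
Your reduction to pairs in $\Qc\times\Qc$ and your sufficiency argument are correct and essentially the paper's own: the paper phrases sufficiency via $\ker\alpha\cap(B\times B)=\ker\beta\cap(B\times B)=B\times B$ rather than via constant compositions, but it is the same dimension count using Lemma \ref{lem rank of restriction wrt finiteness}. The problems are in the necessity direction. First, in the case $\dim B\geq 3$ your witness $\gamma$ is not an element of $\TAB$: a map that swaps $a$ with $b_2$ sends $b_2$ to $a\in A\priv B$, so its image is not contained in $B$ (likewise, extending $\alpha$ and $\beta$ ``by the identity'' on the remaining basis elements of $A\priv B$ would take them outside $\TAB$). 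The idea is repairable --- take $\gamma$ with $a\gamma=b_2$, fixing a basis of $B$ pointwise and sending the other basis elements of $A\priv B$ into $B$; then $\gamma\alpha,\gamma\beta\in Q$ and $(a,b_1)\in\ker\gamma\alpha\priv\ker\gamma\beta$ --- but as written the counterexample is invalid. (For comparison, the paper's witness in this case is even simpler: it arranges $\gamma\alpha\in Q$ while $\gamma\beta\notin Q$, which already contradicts Proposition \ref{GRt in TAB}.)

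Second, and more seriously, the case $\dim B=2$ with a non-singleton one-dimensional subalgebra --- which you yourself identify as the main obstacle --- is described but not proved: ``I use the extra element \dots\ to build $\alpha,\beta\in\Qc$ whose compositions with a suitably chosen $\gamma$ land in $Q$ with differing kernels'' restates the goal rather than giving an argument. The paper's construction is short: pick a unary term $g$ with $g(b_2)\neq b_2$ and, with $\set{a_j}$ a basis extension of $B$ in $A$, set
$$\alpha=\pmap{a_j & b_1 & b_2\\ b_1 & b_2 & b_2},\qquad \beta=\pmap{a_j & b_1 & b_2\\ b_1 & g(b_2) & b_2},\qquad \gamma=\pmap{a_j & b_1 & b_2\\ b_1 & b_1 & b_2};$$
then $\alpha,\beta\in\Qc$, both $\gamma\alpha$ and $\gamma\beta$ lie in $Q$, and $(b_1,b_2)\in\ker\gamma\alpha\priv\ker\gamma\beta$ since $g(b_2)\neq b_2$. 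The same construction with $b_2$ a constant also covers your $\dim B=1$, $\lvert\Csts\rvert\geq 2$ case, where your sketch is right in outline but leaves $\gamma$ unspecified (taking $\gamma=\alpha$ works there). Until the $\dim B=2$ case is actually exhibited, the necessity direction is incomplete.
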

\begin{proof}
First, notice that if $\alpha, \beta\in \TAB$ are such that $\ker\alpha\cap (B\times B)=\ker\beta\cap (B\times B)$, then for any $\gamma\in \TAB$, Lemma \ref{condition product to be regular} tells us that $\gamma\alpha\in Q$ if and only if $\gamma\beta\in Q$, and we also have that $(x\gamma, y\gamma)\in \ker\alpha$ if and only if $(x\gamma, y\gamma)\in \ker\beta$ (as $x\gamma,y\gamma\in B$) showing that $\ker\gamma\alpha=\ker\gamma\beta$.
From this we see that either $\gamma\alpha$ and $\gamma\beta$ are in $Q$ and have the same kernel or neither of them is in $Q$, which gives us in both cases that they are $\GRt$-related.
Since $\GRt[Q] = \GRQ$ and $Q$ is a union of $\GRt$-classes, it follows that $\GRt$ when restricted to $Q\times Q$ is a left congruence.
Therefore $\GRt$ can only fail to be a left congruence on $\Qc\times\Qc$. 
To this extend, we need to find maps $\alpha,\beta\in \Qc$ (which are then $\GRt$-related by Proposition \ref{GRt in TAB}) and $\gamma\in\TAB$ such that $\gamma\alpha$ and $\gamma\beta$ are not $\GRt$-related. 
That is, we need either that only one of $\gamma\alpha$ and $\gamma\beta$ lie in $Q$, or that both products are regular but have a different kernel.

With this in mind, let us assume that $\dim B=2$ and that one-dimensional subalgebras are singletons and let $\alpha,\beta\notin Q$. 
Then $B\alpha\subsetneq A\alpha \subseteq B$ and similarly for $\beta$ so $\rho(\alpha\lvert_B)=\rho(\beta\lvert_B)=1$. 
Also, since one-dimensional subalegras are singleton, this forces $B\alpha=\set{c}$ and $B\beta=\set{c'}$ for some $c,c'\in B$ and thus $\ker\alpha\cap (B\times B)=\ker\beta\cap (B\times B)$. 
Using the previous argument, it follows that for any map $\gamma\in \TAB$, we have that $\gamma\alpha\GRt\gamma\beta$, and thus $\GRt$ is a left congruence in this case.

Similarly, if $\dim B=1$ and the constant subalgebra only consists of a single element, say $0$, then it is clear that for any map $\alpha\notin Q$, we have that $B\alpha = \set{0}$, and thus the argument above works and $\GRt$ is a left congruence in this case. 

\medskip
In order to show that outside of these cases $\GRt$ fails to be a left congruence, we exhibit counterexamples.
Let us assume first that $\dim B\geq 3$ and let $B=\clotX[ \set{y_m}\sqcup \set{b_i}]$, $A=\clotX[\set{x}\sqcup \set{a_j}\sqcup \set{y_m}\sqcup \set{b_i}]$ where $\Card{M}=3$ and the sets $\set{a_j}$ and $\set{b_i}$ are possibly empty.
Consider the following maps:
$$ \alpha = \pmap{x & y_1 & y_2 & y_3 & a_j, b_i \\ y_1 & y_3 & y_3 & y_3 & y_3} \quad \text{and} \quad \beta = \pmap{x & y_1 & y_2 & y_3 & a_j, b_i \\ y_1 & y_2 & y_2 & y_3 & y_3}. $$
Then it is clear that $\alpha, \beta\notin Q$ and thus they are $\GRt$-related. 
Also, we have that $\alpha^2 \in Q$ but $\alpha\beta\notin Q$, and thus they cannot be related. 

Now assume that $\dim B=2$ with $B=\clotX[\set{b_1,b_2}]$, $A=\clotX[\set{a_j}\sqcup \set{ b_1, b_2}]$ and suppose that one-dimensional subalgebras are not singletons, that is, that there exists a term $g$ such that $g(b_2)\neq b_2$.
Then for the maps
$$ \alpha = \pmap{a_j & b_1 & b_2 \\ b_1 & b_2 & b_2}, \quad \beta = \pmap{a_j & b_1 & b_2 \\ b_1 & g(b_2) & b_2}, \text{ and } \gamma = \pmap{a_j & b_1 & b_2 \\ b_1 & b_1 & b_2},$$
we have the following:
\begin{itemize}
    \item $\alpha, \beta\notin Q$ and thus $\alpha\GRt\beta$;
    \item $\gamma\alpha = \pmap{a_j & b_1 & b_2 \\ b_2 & b_2 & b_2}$ and thus $\gamma\alpha\in Q$;
    \item $\gamma\beta = \pmap{a_j & b_1 & b_2 \\ g(b_2) & g(b_2) & b_2}$ and thus $\gamma\beta\in Q$;
    \item $(b_1,b_2)\in \ker\gamma\alpha$, but $(b_1,b_2)\notin \ker\gamma\beta$.
\end{itemize} 
Therefore we have that $\gamma\alpha$ and $\gamma\beta$ are not $\GRt$-related, whereas $\alpha$ and $\beta$ are.
Similarly, if $\dim B=1$ and $\ClotX[\emptyset]$ contains at least two distinct elements, then by abusing notation on the counterexample where $\dim B=2$, we can take $b_2\in \ClotX[\emptyset]$ and choose the term $g$ such that $g(b_2)\neq b_2$.
This shows that $\GRt$ is not a left congruence in these two cases.
\qed\end{proof} 

In any semigroup we can define the relation $\GDs$ by being the join of the relations $\GLs$ and $\GRs$.
A well-known characterisation (see for example in Howie \cite[Prop 1.5.11]{Howie}) tells us that $a\GDs b$ if and only if for some $n\in \mathbb{N}$ there exist $c_0,c_1, \dots, c_{2n}\in S$ such that $a=c_0\GLs c_1\GRs c_2\dots \GLs c_{2n-1}\GRs c_{2n}=b$.
If $\GLs$ and $\GRs$ were to commute, this sequence of compositions would result in a single composition.
However Pastjin \cite{Pj} noticed that in general this is not the case, as can be seen here by the following example.

\begin{example}
Let $A = \clotX[\set{x_1,x_2,x_3,x_4,x_5}]$ and $B=\clotX[\set{x_3,x_4,x_5}]$.
Define $\alpha, \beta,\gamma\in \TAB$ by the following:
$$\begin{gathered} 
\alpha = \pmap{x_1 & x_2 & x_3 & x_4 & x_5 \\ x_3 & x_3 & x_3 & x_4 & x_5}, \qquad 
\beta = \pmap{x_1 & x_2 & x_3 & x_4 & x_5 \\ x_3 & x_3 & x_5 & x_5 & x_5}, \\
\gamma = \pmap{x_1 & x_2 & x_3 & x_4 & x_5 \\ x_3 & x_4 & x_5 & x_5 & x_5}.
\end{gathered}$$
Then we clearly have that $\alpha\in Q$, $\beta\notin Q$, $\gamma\notin Q$ and from Propositions \ref{GLt in TAB} and \ref{GRs in TAB} these maps satisfy the relations $\im\alpha = \im\gamma$ and $\ker\beta \cap (B\times B) = \ker\gamma\cap(B\times B)$.
Therefore $\alpha \GLscGRs \beta$ through $\gamma$.

Now, in order to have $\alpha\GRscGLs\beta$, we need to find $\delta\in \TAB$ such that $\im\delta = \im\beta = \clotX[\set{x_3,x_5}]$ and $\ker\alpha = \ker\delta$ (since $\alpha\in Q$).
However, for any such $\delta$ we would need 
$$\im\delta \cong A/\ker\delta = A/\ker\alpha \cong\im\alpha,$$ 
giving that $\dim(\im\delta) = 3$ which is impossible.
Therefore, no such $\delta\in \TAB$ can exist and $\GLs$ and $\GRs$ do not commute in $\TAB$.
\end{example}

When replacing in the example above the relation $\GRs$ by $\GRt$, the same arguments hold since $\gamma,\beta\notin Q$ implies that $\gamma\GRt\beta$.
Therefore, similarly to $\GLs$ and $\GRs$, we can see that the relations $\GLt=\GLs$ and $\GRt$ do not commute either in $\TAB$, exhibiting a different behaviour from the usual Green's relations.

Nevertheless, in the case of $\TAB$ it is possible to give a precise characterisation for $\GDs$ which, surprisingly, depends on the corank of the subalgebra $\AlgB$ inside $\AlgA$. 
Some elements are easily seen to be $\GDs$-related, such as those of the same rank, as given by the following lemma.

\begin{lemma}\label{lem maps of same rank are D* rel}
Let $\alpha, \beta\in \TAB$ be such that $\rho(\alpha) = \rho(\beta)$. Then $\alpha\GLscGRs\beta$ and thus $\alpha\GDs\beta$.
\end{lemma}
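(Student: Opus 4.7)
The plan is to produce a bridge $\gamma \in \TAB$ such that $\alpha \GLs \gamma$ and $\gamma \GRs \beta$. By Proposition \ref{GLt in TAB} the first condition is equivalent to $\im\gamma = \im\alpha$; by Proposition \ref{GRs in TAB} the second splits into a regular case ($\gamma,\beta \in Q$ with $\ker\gamma = \ker\beta$) and a non-regular case ($\gamma,\beta \in \Qc$ with $\ker\gamma \cap (B\times B) = \ker\beta \cap (B\times B)$). I therefore branch on whether $\beta$ is regular.

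If $\beta \in Q$: the equality $\rho(\alpha) = \rho(\beta)$ together with Lemma \ref{subalgebras of same rank are isomorphic} gives $\im\alpha \cong \im\beta$, and Lemma \ref{isom of images gives RL related maps} then supplies $\gamma \in \TAB$ with $\im\gamma = \im\alpha$ and $\ker\gamma = \ker\beta$. Since $\gamma \GR \beta$ and $\beta \in Q$, the remark following Proposition \ref{GR in TAB} forces $\gamma \in Q$, so $\alpha \GLs \gamma \GRs \beta$.

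If $\beta \notin Q$: I construct $\gamma \in \Qc$ explicitly. Fix a preimage basis $\set{b_k} \subseteq B$ of $B\beta$, extend to $B = \clotX[\set{b_k} \sqcup \set{b_i}]$ and $A = B \sqcup \clotX[\set{a_j}]$, so that
$$\beta = \pmap{b_k & b_i & a_j \\ b_k\beta & t_i(\ob{b_k\beta}) & a_j\beta}$$
for suitable terms $t_i$. Non-regularity of $\beta$ forces $B \subsetneq A$, hence $\Card{J} \geq 1$, and the inequality $\rho(\beta) \leq \dim(B\beta) + \codim[A] B$ holds since $A\beta = \clotX[B\beta \cup \set{a_j\beta}]$. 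Picking a basis $\set{y_\ell}$ of $\im\alpha$, I select an independent subset $\set{c_k} \subseteq \set{y_\ell}$ of cardinality $\Card{K} = \dim(B\beta)$ with $\clotX[\set{c_k}] \subsetneq \im\alpha$ (possible whether $\Card{K} < \rho(\alpha)$, or the two are equal and infinite, in which case one omits a basis element), extend to a basis $\set{c_k} \sqcup \set{z_m}$ of $\im\alpha$ whose cardinality $\Card{M}$ can be chosen $\leq \Card{J}$ (which the above inequality permits), and fix an injection $\psi \colon M \hookrightarrow J$. Define $\gamma$ by $b_k\gamma = c_k$, $b_i\gamma = t_i(\ob{c_k})$, $a_{\psi(m)}\gamma = z_m$, and $a_j\gamma = c^\star$ on the remaining basis elements for any fixed $c^\star \in \im\alpha$. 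Then $B\gamma = \clotX[\set{c_k}] \subsetneq \clotX[\set{c_k} \sqcup \set{z_m}] = A\gamma = \im\alpha$, so $\gamma \in \Qc$ with $\im\gamma = \im\alpha$; and Lemma \ref{lem kernel equalities by map description} applied to $\beta$ and $\gamma$ yields $\ker\gamma \cap (B\times B) = \ker\beta \cap (B\times B)$. Proposition \ref{GRs in TAB} then delivers $\gamma \GRs \beta$.

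The main obstacle is case 2, in which $\gamma$ must meet three constraints simultaneously: prescribed image $\im\alpha$, matching kernel with $\beta$ on $B$, and failure of regularity. Non-regularity forces $\set{c_k}$ to be a \emph{proper} independent subset of some basis of $\im\alpha$, which is what makes the finite/infinite dichotomy unavoidable in the choice above; the codimension bound then guarantees enough room in $\set{a_j}$ to recover the remaining basis directions of $\im\alpha$ through the injection $\psi$.
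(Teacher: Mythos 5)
Your proof is correct, and your regular case coincides exactly with the paper's argument; but your non-regular case is a long detour that the paper avoids entirely. The paper's proof is uniform: Lemmas \ref{subalgebras of same rank are isomorphic} and \ref{isom of images gives RL related maps} produce $\mu$ with $\im\mu = \im\alpha$ and $\ker\mu = \ker\beta$ \emph{regardless} of whether $\beta$ is regular. Since the full kernels agree, so do their restrictions to $B\times B$; and since $\ker\mu = \ker\beta$ means $\mu\GR\beta$, the remark after Proposition \ref{GR in TAB} gives that $\mu$ and $\beta$ are either both in $Q$ or both in $\Qc$. Whichever case of Proposition \ref{GRs in TAB} one lands in, its hypothesis is satisfied, so $\mu\GRs\beta$ and the branch on regularity is unnecessary. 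Your explicit construction in the second case does work --- the choice of $\set{c_k}$ with $\clotX[\set{c_k}]\subsetneq\im\alpha$, the cardinality bookkeeping $\Card{M}\leq\Card{J}$ via $\rho(\beta)\leq\dim(B\beta)+\codim[A]B$, and the appeal to Lemma \ref{lem kernel equalities by map description} are all sound --- and it has the minor virtue of exhibiting a concrete witness, but the "three simultaneous constraints" you identify as the main obstacle dissolve once you notice that matching the full kernel of $\beta$ automatically matches it on $B\times B$ and automatically transfers (non-)regularity. It is worth internalising this shortcut, since the same uniform trick is what makes the corresponding step painless in the proof of Proposition \ref{GD in TAB}.
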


\begin{proof}
For any $\alpha,\beta\in \TAB$ with $\rho(\alpha) = \rho(\beta)$, Lemmas \ref{subalgebras of same rank are isomorphic} and \ref{isom of images gives RL related maps} give us that there exists $\mu\in\TAB$ with $\im\alpha = \im\mu$ and $\ker\beta = \ker\mu$ (so that also $\ker\beta\cap(B\times B) = \ker\mu\cap (B\times B)$).
Thus $\alpha\GLs\mu$ by Proposition \ref{GLt in TAB}.
Additionally, using the appropriate case on whether $\beta$ and $\mu$ are regular in Proposition \ref{GRs in TAB}, we also have that $\mu\GRs\beta$.
Therefore we have $\alpha\GLscGRs\beta$ and $\alpha\GDs\beta$.
\end{proof}

However, the full characterisation of $\GDs$ requires us to concentrate on the composition $\GLscGRs$ beforehand.

\begin{proposition}\label{LoRstars in TAB}
Let $\alpha\in \TAB$. Then $\alpha\GLscGRs\beta$ for some $\beta\in \TAB$ if and only if one of the following happens:
\begin{enumerate}
    \item $\beta\in Q$ and $\im\alpha \cong\im\beta$;
    \item $\beta\notin Q$ and $\im(B\beta) \cong \im\alpha$ with $\rho(\beta\lvert_B)\geq \aleph_0$;
    \item $\beta\notin Q$ and $\rho(\beta\lvert_B) < \rho(\alpha) \leq \rho(\beta\lvert_{B})+\codim[A] B$.
\end{enumerate}
Exchanging the roles of $\alpha$ and $\beta$, we have the dual characterisation for when $\alpha\GRscGLs\beta$.
\end{proposition}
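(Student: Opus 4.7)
Plan: The proof reduces to understanding what $\gamma\in\TAB$ can witness $\alpha\GLs\gamma\GRs\beta$. By Proposition \ref{GLt in TAB}, $\alpha\GLs\gamma$ is equivalent to $\im\gamma=\im\alpha$, so the question is which $\beta$ can be $\GRs$-related to such a $\gamma$. Proposition \ref{GRs in TAB} then forces a natural dichotomy: either $\gamma,\beta\in Q$ with $\ker\gamma=\ker\beta$, or $\gamma,\beta\in\Qc$ with $\ker\gamma\cap(B\times B)=\ker\beta\cap(B\times B)$.

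For the forward direction, I would fix such a witness $\gamma$. In the regular case, the kernel equality gives $\rho(\gamma)=\rho(\beta)$; together with $\im\alpha=\im\gamma$ and Lemma \ref{subalgebras of same rank are isomorphic} this yields $\im\alpha\cong\im\beta$, which is condition (1). If instead $\beta\notin Q$, then $\ker(\gamma\lvert_B)=\ker(\beta\lvert_B)$ produces an isomorphism $B\gamma\cong B\beta$, so $\rho(\gamma\lvert_B)=\rho(\beta\lvert_B)$, and combining with $B\gamma\subseteq\im\gamma=\im\alpha$ forces $\rho(\beta\lvert_B)\leq\rho(\alpha)$. If this inequality is an equality, then $B\gamma$ is a subalgebra of $\im\alpha$ of the same rank, hence $B\gamma\cong\im\alpha\cong B\beta$; since $\gamma\notin Q$, Lemma \ref{lem rank of restriction wrt finiteness} applied to $\gamma$ forces $\rho(\gamma)\geq\aleph_0$, yielding condition (2). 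Otherwise $\rho(\beta\lvert_B)<\rho(\alpha)$, and extending a basis of $B$ to a basis of $A$ using $\codim[A] B$ new elements gives $\rho(\alpha)=\dim(A\gamma)\leq\dim(B\gamma)+\codim[A] B=\rho(\beta\lvert_B)+\codim[A] B$, which is condition (3).

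For the converse I would construct a suitable $\gamma$ in each case. In case (1), since $\im\alpha\cong\im\beta$, Lemma \ref{isom of images gives RL related maps} supplies $\gamma\in\TAB$ with $\im\gamma=\im\alpha$ and $\ker\gamma=\ker\beta$; regularity of $\gamma$ follows from $\gamma\GR\beta$ together with $\beta\in Q$ using the remark after Proposition \ref{GR in TAB}, and $\GR\subseteq\GRs$ then gives $\alpha\GLs\gamma\GRs\beta$. In cases (2) and (3) the strategy is to choose $\gamma\lvert_B$ as a homomorphism with kernel equal to $\ker(\beta\lvert_B)$ but whose image $C$ is a suitable subalgebra of $\im\alpha$ of dimension $\rho(\beta\lvert_B)$, and then to define $\gamma$ on a basis of $A$ extending a basis of $B$ by sending enough of these additional basis elements into $\im\alpha$ to complete a basis of $C$ to one of $\im\alpha$; the free basis property of $\AlgA$ then yields the desired $\gamma\in\TAB$. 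In case (3), the inequality $\rho(\alpha)\leq\rho(\beta\lvert_B)+\codim[A] B$ ensures enough basis elements of $A$ outside $B$ are available, while in case (2) the infinite dimension of $\im\alpha$ lets us take $C$ as a \emph{proper} subalgebra of $\im\alpha$ of the same rank, and $\codim[A] B\geq 1$ (since $\AlgB\neq\AlgA$) then forces $B\gamma\subsetneq A\gamma$, that is, $\gamma\notin Q$.

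The delicate point is the construction in case (2): making $\gamma$ non-regular while keeping $B\gamma$ of the same dimension as $A\gamma$ is only possible when $\rho(\alpha)\geq\aleph_0$, matching exactly the content of Lemma \ref{lem rank of restriction wrt finiteness}. The dual characterisation for $\GRscGLs$ then follows by swapping the roles of $\alpha$ and $\beta$ throughout the argument.
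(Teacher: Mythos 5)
Your proposal is correct and follows essentially the same route as the paper: the forward direction extracts the three cases from the witness $\gamma$ via Propositions \ref{GLt in TAB} and \ref{GRs in TAB} together with Lemma \ref{lem rank of restriction wrt finiteness}, and the converse builds $\gamma$ explicitly in each case (your use of an isomorphism onto a chosen subalgebra $C\subseteq\im\alpha$ to preserve $\ker(\beta\lvert_B)$ is just a repackaging of the paper's term-based construction and Lemma \ref{lem kernel equalities by map description}). The one point to spell out more carefully is the cardinal arithmetic in case (3): when $\rho(\beta\lvert_B)$ is infinite, the inequality $\rho(\alpha)\leq\rho(\beta\lvert_B)+\codim[A]B$ does not immediately give an injection of the missing basis elements of $\im\alpha$ into a basis extension of $B$ in $A$, since cardinal addition is not cancellative, and a short separate argument is needed as in the paper.
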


\begin{proof}
Let $\alpha\in\TAB$ and suppose that $\beta\in\TAB$ is such that $\alpha\GLscGRs\beta$. 
Then there exists $\gamma\in\TAB$ such that $\alpha\GLs\gamma$ and $\gamma\GRs\beta$. Therefore we always have that $\im\alpha =\im\gamma$ by Proposition \ref{GLt in TAB}, and Proposition \ref{GRs in TAB} tells us that either $\beta, \gamma\in Q$ are such that $\ker\gamma = \ker\beta$, or $\beta, \gamma\notin Q$ and then $\ker\gamma\cap(B\times B)=\ker\beta \cap(B\times B)$.
If $\beta\in Q$ (and hence also $\gamma\in Q$), then we have that 
$$\im\alpha = \im\gamma \cong A/\ker\gamma = A/\ker\beta \cong \im\beta,$$
which gives us the first case.

We now assume that $\beta,\gamma\notin Q$.
Thus $$\im\beta\lvert_B\cong B/\left(\ker\beta\cap (B\times B)\right) = B/\left(\ker\gamma\cap (B\times B)\right) \cong \im\gamma\lvert_B,$$ so that $\rho(\beta\lvert_B) = \rho(\gamma\lvert_B)$.
If $\rho(\gamma)=\rho(\gamma\lvert_B)$, then $\rho(\gamma)\geq \aleph_0$ by Lemma \ref{lem rank of restriction wrt finiteness} and from $\rho(\beta\lvert_B) = \rho(\gamma) = \rho(\alpha)$ we get that $\im(B\beta) \cong \im \alpha$ which corresponds to the second case.
Otherwise, $\rho(\gamma\lvert_B)<\rho(\gamma)$ and then, setting $Z$ to be a basis extension of $B$ in $A$, we have that
\begin{align*} 
\rho(\beta\lvert_B) = \rho(\gamma\lvert_B) &< \rho(\alpha) = \rho(\gamma) = \dim (A\gamma) 
\leq \dim (B\gamma) + \dim (Z\gamma) \\ 
&\leq \rho(\gamma\lvert_B) + \dim Z = \rho(\beta\lvert_B) + \codim[A] B,
\end{align*}
giving us the remaining case.

\medskip
For the converse, let $\alpha\in\TAB$. 
We show that each case gives us that $\alpha\GLscGRs\beta$.
\begin{enumerate}
\item If $\im\alpha\cong\im\beta$ and $\beta\in Q$, then Lemma \ref{lem maps of same rank are D* rel} directly gives us that $\alpha\GLscGRs\beta$.
\item Suppose now that $\beta\notin Q$ is such that $\im(B\beta)\cong \im\alpha$ with $\rho(\beta\lvert_B)\geq \aleph_0$. 
We then write $B\beta = \clotX[\set{b_k\beta}]$, $B=\clotX[\set{b_k}\sqcup \set{b_i}]$ and $A = \clotX[\set{b_k}\sqcup \set{b_i}\sqcup\set{a_j}]$. 
By assumption, we have that $A\alpha = \clotX[\set{x_k\alpha}]$ for some $\set{x_k}\subseteq A$. Since $\Card{K} = \rho(\beta\lvert_B) \geq \aleph_0$ we have that by setting $K'= K\priv\set{1}$ there exists a bijection $\phi\colon K\to K'$. For all $k\in K$ we set $z_k = x_{k\phi}\alpha$ and we now define $\gamma\in\TAB$ by:
$$ \gamma = \pmap{b_k & b_i & a_j\\ z_k & t_i(\ob{z_k}) & x_1\alpha},$$ 
where the terms $t_i$ are such that $b_i\beta = t_i(\ob{b_k\beta})$. 
Then, we have that $\im\alpha = \clotX[\set{x_k\alpha}] = \im\gamma \neq \clotX[\set{z_k}] = \im(B\gamma)$ and thus $\alpha\GLs\gamma$ and $\gamma\notin Q$. 
Also, looking at the expression of $\gamma$ and noting that $\beta= \pmap{b_k & b_i & a_j\\b_k\beta & t_i(\ob{b_k\beta}) & a_j\beta}$, we have that $\gamma\GRs\beta$ by Lemma \ref{lem kernel equalities by map description}. 
Therefore $\alpha\GLscGRs \beta$ as expected.

\item Last, assume that $\beta\notin Q$ and $\rho(\beta\lvert_B) < \rho(\alpha) \leq \rho(\beta\lvert_B) + \codim[A] B$. Keeping the same notation as above for $B\beta$, $B$ and $A$, we write $A\alpha = \clotX[\set{x_k\alpha}\sqcup\set{y_\ell\alpha}]$ for some $\set{x_k}, \set{y_\ell}\subseteq A$ where $L\neq\emptyset$ (since $\rho(\alpha) > \rho(\beta\lvert_B)$). Since $\codim[A]B = \Card{J}$, we can now rewrite the equation on the ranks in terms of the underlying sets, which gives us that
$$\Card{K} < \Card{K\sqcup L} \leq \Card{K\sqcup J}.$$
Notice that if $\dim(B\beta) = \Card{K}$ is finite, then we directly obtain that $\Card{L}\leq \Card{J}$. 
Otherwise $\Card{K}$ is infinite and from the equation above we have that $\Card{K}<\Card{K\sqcup L} = \max\set{\Card{K}, \Card{L}}$, so that $\Card{L} > \Card{K}$.
Similarly $\Card{J}>\Card{K}$, which put together gives us that $\Card{L} =\Card{K\sqcup L} \leq \Card{K\sqcup J} = \Card{J}$.
From the inequality on the cardinals of $L$ and $J$ in both the finite and the infinite dimensional cases, we can extract a subset $J'\subseteq J$ such that $\phi \colon J'\to L$ is a bijection. From this we define $\set{z_j}\subseteq B$ for all $j\in J$ by $z_j = y_{j\phi}\alpha$ if $j\in J'$ and $z_j = y_1\alpha$ (which necessarily exists) otherwise. 
Under this definition, it is easy to see that $\clotX[z_j] = \clotX[\set{y_\ell\alpha}]$.
Now we define a map $\gamma\in\TAB$ as follows:
$$ \gamma = \pmap{b_k & b_i & a_j \\x_k\alpha & t_i(\ob{x_k\alpha}) & z_j},$$ 
where the terms $t_i$ are such that $b_i\beta = t_i(\ob{b_k\beta})$. Then we have that $\im\gamma = \clotX[\set{x_k\alpha}\sqcup\set{y_\ell\alpha}] = \im\alpha$, that is, $\alpha\GLs\gamma$. Similarly as above, we can also see that $\gamma\notin Q$ and that $\gamma\GRs\beta$ by Lemma \ref{lem kernel equalities by map description}.
Therefore $\alpha\GLscGRs\beta$, which concludes the last case of the proof.\qed
\end{enumerate}
\end{proof}

If $\AlgB$ is a maximal proper subalgebra of a $\AlgA$, then the set of maps that can be reached through a series of composition of $\GLs$ and $\GRs$ starting from a map $\alpha$ with finite rank is restricted to those having the same rank as $\alpha$. 
This is stated formally and proved in the following lemma.

\begin{lemma}\label{GLscGRs under dab 1}
Assume that $\codim[A] B=1$ and let $\alpha\in\TAB$ be a map of finite rank.
Then $\alpha\GLscGRs\beta$ for some $\beta\in\TAB$ if and only if $\rho(\alpha)=\rho(\beta)$.
\end{lemma}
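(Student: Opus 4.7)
The forward direction is immediate from Lemma~\ref{lem maps of same rank are D* rel}: if $\rho(\alpha)=\rho(\beta)$ then $\alpha\GLscGRs\beta$ with no assumption on $\codim[A] B$ or finiteness of rank. So the real work is in the converse.

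For the converse, the plan is to invoke Proposition~\ref{LoRstars in TAB} and show that each of the three cases there forces $\rho(\alpha)=\rho(\beta)$ under our hypotheses. Case~(1) gives $\im\alpha\cong\im\beta$ and hence $\rho(\alpha)=\rho(\beta)$ for free. Case~(2) requires $\rho(\beta\lvert_B)\geq\aleph_0$ together with $\im(B\beta)\cong\im\alpha$, which would force $\rho(\alpha)=\rho(\beta\lvert_B)$ to be infinite; this contradicts the hypothesis that $\alpha$ has finite rank, so case~(2) cannot occur.

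The interesting case is~(3): $\beta\notin Q$ and $\rho(\beta\lvert_B)<\rho(\alpha)\leq \rho(\beta\lvert_B)+\codim[A]B=\rho(\beta\lvert_B)+1$. The two inequalities force $\rho(\alpha)=\rho(\beta\lvert_B)+1$, so in particular $\rho(\beta\lvert_B)$ is finite. The key observation is then that when $\codim[A] B=1$ and $\beta\notin Q$, one necessarily has $\rho(\beta)=\rho(\beta\lvert_B)+1$. Indeed, writing $A=\clotX[B\sqcup\{z\}]$, we have $A\beta=\clotX[B\beta\cup\{z\beta\}]$; if $z\beta\in B\beta$ then $A\beta=B\beta$ would give $\beta\in Q$, so $z\beta\notin B\beta$, and the exchange property applied to a preimage basis of $B\beta$ together with $z\beta$ yields an independent set, giving $\rho(\beta)=\dim(B\beta)+1=\rho(\beta\lvert_B)+1$. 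Combining with $\rho(\alpha)=\rho(\beta\lvert_B)+1$ yields $\rho(\alpha)=\rho(\beta)$ as desired.

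The hypotheses are used at exactly two places: $\codim[A]B=1$ collapses the inequality in case~(3) to an equality and controls $\rho(\beta)$ in terms of $\rho(\beta\lvert_B)$, while the finiteness of $\rho(\alpha)$ is what rules out case~(2). No other subtlety arises, and the main obstacle is simply recognising the codimension-$1$ trick that turns the inequality into an equation; once that is in place, the proof is essentially a case analysis on Proposition~\ref{LoRstars in TAB}.
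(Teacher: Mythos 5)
Your proof is correct and follows essentially the same route as the paper: the forward direction via Lemma~\ref{lem maps of same rank are D* rel}, and the converse by a case analysis on Proposition~\ref{LoRstars in TAB}, with finiteness of $\rho(\alpha)$ ruling out case~(2) and $\codim[A]B=1$ collapsing the inequality in case~(3). The only (harmless) difference is that where you reprove $\rho(\beta)=\rho(\beta\lvert_B)+1$ directly via the exchange property applied to the single basis element $z$ outside $B$, the paper obtains the same strict inequality $\dim(B\beta)<\dim(A\beta)$ by citing the contrapositive of Lemma~\ref{lem rank of restriction wrt finiteness} and then squeezes with $\rho(\beta)\leq\rho(\beta\lvert_B)+\codim[A]B$.
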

\begin{proof}
Since the sufficient condition was given by Lemma \ref{lem maps of same rank are D* rel}, we only need to show one direction.
So we let $\beta\in\TAB$ be such that $\alpha\GLscGRs\beta$.

We first suppose that $\beta\in Q$. 
Then by Proposition \ref{LoRstars in TAB}, $\alpha\GLscGRs\beta$ if and only if $\im\alpha\cong\im\beta$ which is equivalent to $\rho(\alpha)=\rho(\beta)$.

Now suppose that $\beta\notin Q$. Since $\alpha$ has finite rank, only case 3 of Proposition \ref{LoRstars in TAB} can occur. Hence we have that $\rho(\beta\lvert_B)< \rho(\alpha)\leq\rho(\beta\lvert_B) + \codim[A] B = \rho(\beta\lvert_B) + 1$.
This forces $\rho(\beta\lvert_B)$ to be finite and using the contrapositive of Lemma \ref{lem rank of restriction wrt finiteness} we get
\begin{align*}
    \rho(\beta\lvert_B) =\dim(B\beta) &<\rho(\beta)=\dim(A\beta)\\
    &\leq \dim(B\beta) + \codim[A] B \\
    &= \rho(\beta\lvert_B)+1,
\end{align*}
which in turn forces $\rho(\beta)=\rho(\beta\lvert_B)+1$.
Similarly, $\rho(\alpha)=\rho(\beta\lvert_B)+1$, and so $\rho(\alpha)=\rho(\beta)$, which concludes the proof.
\qed\end{proof}

Conversely, if $\AlgB$ is not a maximal proper subalgebra then, by consecutive compositions of $\GLs$ and $\GRs$, we are able to go up and down the finite ranks as long as the map we started with does not have the minimal rank $e$ where $e$ was defined in Section \ref{section on Q} as the smallest rank of a subalgebra of $\AlgA$.
This process is given formally by the following lemma.

\begin{lemma}\label{GLscGRs under dab>1}
Assume that $\codim[A] B\geq 2$ and let $\alpha\in Q$ be such that $e<\rho(\alpha)<\aleph_0$.

If $\rho(\alpha)\geq e+2$ then there exist $\delta_1\notin Q$ and $\gamma_1\in Q$ such that $\rho(\delta_1)=\rho(\gamma_1)=\rho(\alpha)-1$ and $\alpha\GLscGRs\delta_1\GLs\gamma_1$.

If $\rho(\alpha) <\dim B$, then there exist $\delta_2\notin Q$ and $\gamma_2\in Q$ such that $\rho(\delta_2)=\rho(\alpha)$, $\rho(\gamma_2)=\rho(\alpha)+1$ and $\alpha\GLs\delta_2\GRscGLs\gamma_2$.

Consequently, for all $\beta\in \TAB$ such that $e<\rho(\beta)<\aleph_0$, there exists $n\in\N$ such that $\alpha\left(\GLscGRs\right)^n\beta$.
\end{lemma}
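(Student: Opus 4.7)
My plan is to prove the three parts in order, using case 3 of Proposition \ref{LoRstars in TAB} as the main engine for parts 1 and 2, and then iterating for part 3. For part 1, given $\alpha\in Q$ with preimage basis $\{y_i\}\subseteq B$ of cardinality $\rho(\alpha)$, I will explicitly construct $\delta_1\notin Q$ with $\rho(\delta_1\lvert_B)=\rho(\alpha)-2$ and $\rho(\delta_1)=\rho(\alpha)-1$. The construction collapses two chosen preimage basis vectors $y_1,y_2$ onto a single element $c\in\clotX[\{y_i\alpha\}_{i\geq 3}]$, sends the other $y_i$'s to $y_i\alpha$, and sends a chosen basis vector $a_1\in A\priv B$ (available since $\codim[A]B\geq 2$) to $y_1\alpha$, with all remaining basis vectors of $A$ sent inside $\clotX[\{y_i\alpha\}_{i\geq 3}]$ so that $B\delta_1$ and $A\delta_1$ hit the prescribed ranks. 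The hypothesis $\rho(\alpha)\geq e+2$ guarantees $\rho(\alpha)-2\geq e$, so $c$ exists. Then case 3 of Proposition \ref{LoRstars in TAB} applies through $\rho(\delta_1\lvert_B)+\codim[A]B\geq\rho(\alpha)$, giving $\alpha\GLscGRs\delta_1$. Finally Lemma \ref{Reg and non-reg maps with same image} produces $\gamma_1\in Q$ with $\im\gamma_1=\im\delta_1$, and $\gamma_1\GLs\delta_1$ follows from Proposition \ref{GLt in TAB}.

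For part 2, I construct $\delta_2\notin Q$ with $\im\delta_2=\im\alpha$ but $\rho(\delta_2\lvert_B)=\rho(\alpha)-1$, by sending $y_1$ to some $c'\in\clotX[\{y_i\alpha\}_{i\geq 2}]$ and compensating with $a_1\mapsto y_1\alpha$ for a chosen $a_1\in A\priv B$, keeping all other basis images inside $\clotX[\{y_i\alpha\}_{i\geq 2}]$. This gives $\alpha\GLs\delta_2$ by Proposition \ref{GLt in TAB}. For $\gamma_2$, the hypothesis $\rho(\alpha)<\dim B$ lets me select $\rho(\alpha)+1$ independent elements in $B$ and take $\gamma_2\in Q$ to be the idempotent projecting onto their closure. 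Applying case 3 of Proposition \ref{LoRstars in TAB} to the pair $(\gamma_2,\delta_2)$ in the roles of $\alpha$ and $\beta$ there, the inequalities
$$\rho(\delta_2\lvert_B)=\rho(\alpha)-1<\rho(\alpha)+1=\rho(\gamma_2)\leq\rho(\delta_2\lvert_B)+\codim[A]B$$
(again using $\codim[A]B\geq 2$) give $\gamma_2\GLscGRs\delta_2$, equivalently $\delta_2\GRscGLs\gamma_2$.

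For part 3, I first reduce to $\beta\in Q$: if $\beta\notin Q$ then Lemma \ref{Reg and non-reg maps with same image} yields $\beta'\in Q$ with $\im\beta'=\im\beta$, and $\beta'\GLs\beta\GRs\beta$ gives $\beta'\GLscGRs\beta$. If $\rho(\alpha)=\rho(\beta)$ then Lemma \ref{lem maps of same rank are D* rel} closes the argument in a single step. Otherwise I iterate parts 1 and 2: part 2 is applicable whenever the current rank is strictly less than $\rho(\beta)\leq\dim B$, and part 1 whenever the current rank exceeds $\rho(\beta)\geq e+1$, so the current rank is always at least $e+2$ as required. Each application produces a chain $\GLs\GRs\GLs$ between consecutive regular maps differing by one rank level, which becomes $(\GLscGRs)^2$ after padding with the reflexive step $\gamma\GRs\gamma$; thus after $2|\rho(\alpha)-\rho(\beta)|$ compositions we reach a regular map of rank $\rho(\beta)$, and one more invocation of Lemma \ref{lem maps of same rank are D* rel} completes the chain to $\beta$. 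The principal obstacle is the bookkeeping of the explicit constructions in parts 1 and 2: verifying that each of $\delta_1$, $\delta_2$, $\gamma_2$ hits exactly the prescribed ranks on both $B$ and $A$, and that each of the three hypotheses on $\codim[A]B$, $\rho(\alpha)$, and $\dim B$ enters at precisely the correct step.
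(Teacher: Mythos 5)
Your proposal is correct and follows essentially the same route as the paper: explicit constructions that collapse (resp.\ extend) the preimage basis by one rank level so that case 3 of Proposition \ref{LoRstars in TAB} applies via the hypothesis $\codim[A]B\geq 2$, followed by iteration and padding with reflexive steps for the final claim. The only differences are cosmetic (you obtain $\gamma_1$ from Lemma \ref{Reg and non-reg maps with same image} and take $\gamma_2$ to be an idempotent, where the paper writes both maps out explicitly, and you ascend or descend as needed where the paper reduces to descent by symmetry).
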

\begin{proof}
Suppose that $\codim[A]B\geq 2$ and consider $\alpha\in Q$ such that $e < \rho(\alpha)<\aleph_0$.
Since $\alpha$ is regular, we let $A\alpha=B\alpha=\clotX[\set{b_i\alpha}]$, $B=\clotX[\set{b_i}\sqcup\set{c_k}]$ and $A=\clotX[\set{b_i}\sqcup\set{c_k}\sqcup\set{x_j}]$ with $\Card{J}\geq 2$.
Then we have that 
$$\alpha=\pmap{b_i & c_k & x_j\\b_i\alpha & u_k(\ob{b_i\alpha}) & v_j(\ob{b_i\alpha})}$$
for some terms $u_k$ and $v_j$.

For the first part, since $\Card{I}=\rho(\alpha) \geq 2$, define $\delta_1$ and $\gamma_1$ as follows:
$$ \delta_1 = \pmap{b_{i\geq 3} & \set{b_1,b_2} & c_k & x_j\\b_i\alpha & d & d & b_2\alpha} \quad \text{and}
\quad \gamma_1 = \pmap{b_{i\geq 2} & b_1 & c_k & x_j\\b_i\alpha & b_2\alpha & b_2\alpha & b_2\alpha},$$
where the set $\set{b_{i\geq 3}}$ is possibly empty and the element $d\in B$ is taken as a constant if $e=0$, and $d=b_3\alpha$ otherwise (which necessarily exists since $\Card{I}=\rho(\alpha) \geq 3$ in that case).
Thus $\delta_1\notin Q$, $\gamma_1\in Q$ and we have that $\im\delta_1 = \clotX[\set{b_i\alpha}\priv \set{b_1\alpha}] = \im \gamma_1$, so that $\delta_1\GLs\gamma_1$ and $\rho(\delta_1)=\rho(\gamma_1)=\Card{I}-1$.
Also $\rho(\delta_1\lvert_B) = \Card{I}-2 < \Card{I} = \rho(\alpha)$ and $\rho(\delta_1\lvert_B) + \codim[A] B \geq \Card{I}-2 + 2 = \rho(\alpha)$, which shows that $\alpha\GLscGRs\delta_1$ by Proposition \ref{LoRstars in TAB} and finishes the first part of the proof.

Now assume that $e<\rho(\alpha)<\dim B$, then there exists an element $z\in B$ such that $z\notin \im\alpha$.
Using the same notations as above with this time $\Card{I}=\rho(\alpha) \geq 1$, define $\delta_2$ and $\gamma_2$ by
$$ \delta_2 = \pmap{b_{i\geq 2} & b_1 & c_k & x_j\\b_i\alpha & d & d & b_1\alpha} \quad \text{and}
\quad \gamma_2 = \pmap{b_i & c_k & x_j\\b_i\alpha & z & x_j\alpha},$$
where the set $\set{b_{i\geq 2}}$ is again possibly empty and the element $d\in B$ is defined in a similar way as before, that is, $d$ is chosen as any constant if $e=0$ and is set to $b_2\alpha$ (which then exists) otherwise.
Then clearly $\delta_2\notin Q$ and $\im\alpha=\im\delta_2$ so $\alpha\GLs\delta_2$. 
Also $\set{x_j\alpha} \subseteq \clotX[\set{b_i\alpha}]$ so $\gamma_2\in Q$.
Finally, $\rho(\gamma_2) = \Card{I}+1 \leq \rho(\delta_2\lvert_B)+\codim[A] B$ and since $\rho(\delta_2\lvert_B) = \Card{I}-1 < \rho(\gamma_2)$ we have that $\gamma_2\GLscGRs\delta_2$ from Proposition \ref{LoRstars in TAB}, finishing the proof of the second part of the lemma.

Now consider $\beta\in\TAB$ such that $e<\rho(\beta)<\aleph_0$. 
If $\beta\notin Q$, then by Lemma \ref{Reg and non-reg maps with same image}, there exists $\beta'\in Q$ such that $\im\beta'=\im\beta$ and thus, $\beta'\GLs\beta\GRs\beta$, so we can assume that $\beta\in Q$ in the first place.
Similarly, we can assume that $\rho(\alpha) \geq \rho(\beta)$ since otherwise we can exchange the role of $\alpha$ and $\beta$.
If $\rho(\alpha) = \rho(\beta)$, then $\alpha\GLscGRs\beta$ by Lemma \ref{lem maps of same rank are D* rel}, so we can assume that $\rho(\alpha) > \rho(\beta)$.
Set $m = \rho(\alpha) - \rho(\beta)$ and construct $\gamma_1, \dots, \gamma_m\in Q$ by the process described in the first part of the lemma with the following properties:
\begin{itemize}
    \item $\rho(\gamma_1) = \rho(\alpha)-1$ and $\alpha\left(\GLscGRs\right)^2\gamma_1$;
    \item $\rho(\gamma_m)=\rho(\beta)$; and
    \item $\rho(\gamma_{r+1})=\rho(\gamma_r)-1$ and $\gamma_r \left(\GLscGRs\right)^2\gamma_{r+1}$ for all $1\leq r \leq m-1$.
\end{itemize}
Then we have that $\alpha\left(\GLscGRs\right)^{2m}\gamma_m$ and $\gamma_m\GLscGRs\beta$.
Therefore, in all cases, we have that $\alpha\left(\GLscGRs\right)^n \beta$ for some integer $n$.
\qed\end{proof}

The two previous lemmas were concerned by maps of finite rank. 
The following two on the other hand, are focused on maps of infinite rank, whenever this is possible. 
We first show that if the subalgebra $\AlgB$ has a codimension smaller than its dimension, then the relation $\GLscGRs$ can only keep the higher ranks stable.

\begin{lemma}\label{GLscGRs big infinites}
Assume that $\codim[A]B < \aleph_\kappa \leq \dim B$ and let $\alpha\in\TAB$ be such that $\rho(\alpha) = \aleph_\kappa$. Then $\alpha\GLscGRs\beta$ for some $\beta\in\TAB$ if and only if $\rho(\beta) = \rho(\alpha)$.
\end{lemma}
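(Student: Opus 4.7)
The sufficiency is immediate from Lemma \ref{lem maps of same rank are D* rel}, since maps of the same rank are automatically $\GLscGRs$-related. So the content is in the necessity, and the plan is to apply Proposition \ref{LoRstars in TAB} to $\alpha\GLscGRs\beta$ and show that under the hypothesis $\codim[A]B<\aleph_\kappa=\rho(\alpha)$, each of its three cases either forces $\rho(\beta)=\aleph_\kappa$ or is vacuous.

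First I would dispose of cases (1) and (2) of Proposition \ref{LoRstars in TAB}. Case (1) gives $\im\alpha\cong\im\beta$ directly, whence $\rho(\beta)=\rho(\alpha)=\aleph_\kappa$. Case (2) gives $\rho(\beta\lvert_B)=\rho(\alpha)=\aleph_\kappa$, and since $A\beta$ contains $B\beta$ and is generated over it by the images of a basis extension of $B$ in $A$, we have
$$\aleph_\kappa=\rho(\beta\lvert_B)\leq\rho(\beta)\leq\rho(\beta\lvert_B)+\codim[A]B=\aleph_\kappa+\codim[A]B.$$
Using $\codim[A]B<\aleph_\kappa$ and the fact that the sum of two cardinals one of which is infinite equals their maximum, the right-hand side collapses to $\aleph_\kappa$, giving $\rho(\beta)=\aleph_\kappa$ as required.

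The crux of the argument, and what I expect to be the main obstacle, is showing that case (3) of Proposition \ref{LoRstars in TAB} cannot arise at all. In that case one would have $\rho(\beta\lvert_B)<\aleph_\kappa\leq\rho(\beta\lvert_B)+\codim[A]B$. The plan is to derive a contradiction from the cardinal arithmetic: both summands on the right are strictly less than $\aleph_\kappa$ (by hypothesis for $\codim[A]B$, and by the strict inequality for $\rho(\beta\lvert_B)$). If both are finite, then $\aleph_\kappa\leq\aleph_0$ forces $\kappa=0$, but then the sum is still finite, contradicting $\aleph_0\leq\rho(\beta\lvert_B)+\codim[A]B$. If at least one is infinite, the sum equals $\max\{\rho(\beta\lvert_B),\codim[A]B\}<\aleph_\kappa$, again a contradiction. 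Thus case (3) is impossible, and combined with the two previous paragraphs this completes the proof.
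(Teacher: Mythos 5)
Your proposal is correct and follows essentially the same route as the paper: sufficiency from Lemma \ref{lem maps of same rank are D* rel}, then a case analysis of Proposition \ref{LoRstars in TAB} in which cases (1) and (2) force $\rho(\beta)=\aleph_\kappa$ via the bound $\rho(\beta)\leq\rho(\beta\lvert_B)+\codim[A]B$ and case (3) is ruled out by cardinal arithmetic. The paper phrases the case (3) contradiction as forcing $\rho(\beta\lvert_B)\geq\aleph_\kappa$ against $\rho(\beta\lvert_B)<\rho(\alpha)$, but this is the same computation as yours.
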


\begin{proof}
Since one direction is already given by Lemma \ref{lem maps of same rank are D* rel}, we assume that $\alpha\GLscGRs\beta$ and we go through the cases of Proposition \ref{LoRstars in TAB}.
If $\im\alpha\cong\im\beta$, then we directly have that $\rho(\alpha) = \rho(\beta)$.
On the other hand, if $\im\alpha\cong\im (B\beta)$ then $\rho(\beta\lvert_B) =\aleph_\kappa$. 
But then $\aleph_\kappa = \rho(\beta\lvert_B)\leq \rho(\beta) \leq \rho(\beta\lvert_B) + \codim[A]B = \aleph_\kappa$, and therefore $\rho(\beta) = \aleph_\kappa = \rho(\alpha)$.
Notice that the third case of the proposition cannot occur since we would have that $\aleph_\kappa = \rho(\alpha) \leq \rho(\beta\lvert_B) +\codim[A]B$ which forces $\rho(\beta\lvert_B) = \aleph_\kappa$ and thus the condition $\rho(\beta\lvert_B)<\rho(\alpha)$ is not satisfied.
\qed\end{proof}

In another way, if the dimension and codimension of $B$ are both infinite cardinals, from a map with rank at least $\aleph_0$, we can reach maps with larger infinite rank through the relation $\GLscGRs$, as long as we do not go further than the codimension of $B$. 
This idea is given more formally in the following Lemma.

\begin{lemma}\label{GLscGRs jumping through infinites}
Assume that $\dim B\geq \aleph_0$ and $\codim[A]B=\aleph_k$ and set $M = \min\{\dim B, \codim[A] B\}$. Let $\alpha\in Q$ be such that $\aleph_0\leq \rho(\alpha) < M$. Then for all $\nu$ with $\rho(\alpha)<\nu\leq M$ there exists $\beta\in\TAB$ such that $\rho(\beta) = \nu$ and $\alpha\GLscGRs\beta$.
\end{lemma}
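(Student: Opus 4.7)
The plan is to apply case 2 of Proposition \ref{LoRstars in TAB}: I will construct $\beta \in \Qc$ with $\im(B\beta) \cong \im\alpha$, $\rho(\beta\lvert_B) \geq \aleph_0$, and $\rho(\beta) = \nu$. Case 1 of the proposition is excluded since it would force $\rho(\beta) = \rho(\alpha) < \nu$, and case 3 is excluded since it requires $\rho(\beta\lvert_B) < \rho(\alpha)$, which will clash with the intended $\rho(\beta\lvert_B) = \rho(\alpha)$.

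Set $\mu = \rho(\alpha)$. Since $\alpha \in Q$, I fix a preimage basis $\{b_i\}_{i\in I} \subseteq B$ with $A\alpha = B\alpha = \clotX[\{b_i\alpha\}]$ and $|I| = \mu$, and extend to $B = \clotX[\{b_i\} \sqcup \{c_k\}]$ and $A = B \sqcup \clotX[\{a_j\}_{j\in J}]$ with $|J| = \codim[A] B = \aleph_k$. Since $\nu \leq \dim B$, I choose an independent set $\{d_s\}_{s\in S} \subseteq B$ with $|S| = \nu$. Because $\mu$ and $\nu$ are infinite with $\mu < \nu$, cardinal arithmetic gives $\mu + \nu = \nu$, so $S$ partitions as $S = I_0 \sqcup J_0$ with $|I_0| = \mu$ and $|J_0| = \nu$; since $\nu \leq \aleph_k = |J|$, I may relabel via bijections $I \to I_0$ and $J_0 \hookrightarrow J$ and write these elements as $\{d_i\}_{i\in I}\sqcup\{d_j\}_{j\in J_0}$ with $J_0\subseteq J$. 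Fixing some $i_0\in I$, I then define $\beta \in \TAB$ on the basis of $A$ by
\[ b_i\beta = d_i\ (i\in I),\quad c_k\beta = d_{i_0},\quad a_j\beta = d_j\ (j\in J_0),\quad a_j\beta = d_{i_0}\ (j\in J\priv J_0), \]
extended to an endomorphism via the free basis property; note that $\beta\in\TAB$ since every target element lies in $B$.

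The verification is then direct: $B\beta = \clotX[\{d_i\}_{i\in I}]$ has rank $\mu$, so $\im(B\beta) \cong \im\alpha$ by Lemma \ref{subalgebras of same rank are isomorphic}, and $\rho(\beta\lvert_B) = \mu \geq \aleph_0$; meanwhile $A\beta = \clotX[\{d_i\}_{i\in I}\sqcup\{d_j\}_{j\in J_0}]$ has rank $\mu + \nu = \nu$, whence $\rho(\beta) = \nu$ and $B\beta \subsetneq A\beta$, i.e.\ $\beta\in\Qc$. Case 2 of Proposition \ref{LoRstars in TAB} then yields $\alpha \GLscGRs \beta$, as required. I expect the main obstacle to be the cardinal bookkeeping that guarantees the partition $S = I_0\sqcup J_0$ and the embedding $J_0\subseteq J$ with the prescribed cardinalities; once these indexings are set up, the $\GLs\circ\GRs$-relationship is obtained by reading off ranks from the explicit basis description and invoking the proposition.
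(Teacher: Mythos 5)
Your proof is correct and follows essentially the same route as the paper's: both construct $\beta\notin Q$ on a basis of $A$ adapted to $B$ so that $\rho(\beta\lvert_B)=\rho(\alpha)$ and $\rho(\beta)=\nu$, and then invoke case 2 of Proposition \ref{LoRstars in TAB}. The only cosmetic difference is that the paper takes $B\beta=\im\alpha$ itself and uses basis elements of $B$ as the extra images of the $a_j$, whereas you map onto a fresh independent set $\set{d_s}$ of size $\nu$, which makes the rank count and the fact that $B\beta\subsetneq A\beta$ slightly more transparent.
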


\begin{proof}
Let $\alpha\in Q$. Then we can write $A\alpha = B\alpha = \clotX[\set{b_k\alpha}]$,  $B=\clotX[\set{b_k}\sqcup\set{c_i}]$ and $A = \clotX[\set{b_k}\sqcup\set{c_i}\sqcup\set{a_j}]$.
By assumption on the rank of $\alpha$, we have that $\Card{K\sqcup I} = \dim B > \rho(\alpha) = \Card{K}\geq \aleph_0$, and thus it follows that $\Card{I} = \dim B > \aleph_0$.
Now, let $\nu$ be such that $\rho(\alpha)<\nu\leq M = \min\{\Card{I}, \Card{J}\}$. 
Then there exist sets $S\subseteq J$ and $S'\subseteq I$ such that $\Card{S} = \Card{S'} = \nu$, and we let $\phi\colon S\to S'$ be a bijection between them.
For all $j\in J$ we now set elements $z_j\in B$ by $z_j = c_{j\phi}$ if $j\in S$ and $z_j = c_1$ otherwise, and we define the map $\beta\in \TAB$ as:
$$ \beta = \pmap{ b_k & c_i & a_j \\ b_k\alpha & b_1\alpha & z_j }.$$
Clearly we have that $\beta\notin Q$ and $\rho(\beta) = \Card{K\sqcup S} = \Card{K} + \nu =\rho(\alpha) + \nu = \nu$.
Also $\im(B\beta) = \clotX[\set{b_k\alpha}] = \im\alpha$ and from the second case of Proposition \ref{LoRstars in TAB} we have that $\alpha\GLscGRs\beta$, which concludes the proof.
\qed\end{proof}

We now have all the tools needed to prove the characterisation of $\GDs$ that differs greatly depending on the corank of our subalgebra $\AlgB$ as is shown by the theorem below.

\begin{theorem}\label{GDs in TAB}
Let $\alpha,\beta\in \TAB$. Then $\alpha\GDs\beta$ if and only if one of the following happens:
\begin{enumerate}[label=\rm\Roman*), ref=\Roman*)]
    \item $\codim[A] B=1$ and $\rho(\alpha)=\rho(\beta)$; \label{enum:D* cond 1}
    \item $2\leq \codim[A] B < \aleph_0$ and either $e<\rho(\alpha),\rho(\beta)<\aleph_0$ or $\rho(\alpha) = \rho(\beta)$; \label{enum:D* cond 2}
    \item $\codim[A]B = \aleph_\kappa$ and either $e<\rho(\alpha),\rho(\beta)\leq \aleph_\kappa$ or $\rho(\alpha) = \rho(\beta)$. \label{enum:D* cond 3}
\end{enumerate}
\end{theorem}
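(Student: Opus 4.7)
The plan is to leverage Proposition \ref{LoRstars in TAB} together with Lemmas \ref{lem maps of same rank are D* rel}, \ref{GLscGRs under dab 1}, \ref{GLscGRs under dab>1}, \ref{GLscGRs big infinites}, and \ref{GLscGRs jumping through infinites}, using the standard characterisation $\alpha\GDs\beta$ iff there is a finite sequence of $\GLs$ and $\GRs$ edges between them. Since both relations are equivalences, adjacent same-type edges can be merged, reducing the chain to an alternating one, so the problem becomes understanding which ranks are reachable from $\rho(\alpha)$ by iterating $\GLscGRs$ and $\GRscGLs$.

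For sufficiency, the sub-case $\rho(\alpha)=\rho(\beta)$ is handled in all three cases by Lemma \ref{lem maps of same rank are D* rel}. In case \ref{enum:D* cond 2}, when $e<\rho(\alpha),\rho(\beta)<\aleph_0$, Lemma \ref{GLscGRs under dab>1} produces a chain of $\GLscGRs$ steps between any two such ranks, after possibly replacing non-regular maps by regular ones of the same image via Lemma \ref{Reg and non-reg maps with same image}. In case \ref{enum:D* cond 3}, I split on the finiteness of the two ranks: the finite--finite sub-case again uses Lemma \ref{GLscGRs under dab>1}; the infinite--infinite sub-case (assuming without loss of generality $\rho(\alpha)\leq\rho(\beta)$) uses Lemma \ref{GLscGRs jumping through infinites}, since $\rho(\beta)\leq\min\{\dim B,\aleph_\kappa\}$; the mixed finite--infinite sub-case uses Proposition \ref{LoRstars in TAB} case (3) directly, building $\gamma\notin Q$ with $\rho(\gamma\lvert_B)=\rho(\alpha)-1$ and $\rho(\gamma)=\rho(\beta)$, which is feasible because $\rho(\alpha)\leq(\rho(\alpha)-1)+\aleph_\kappa$ and $\rho(\beta)\in(\rho(\alpha)-1,\aleph_\kappa]$, so that $\alpha\GLscGRs\gamma$ and Lemma \ref{lem maps of same rank are D* rel} closes the distance from $\gamma$ to $\beta$.

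For necessity, I would verify by induction on the length of the chain that a single $\GLscGRs$ step (and by duality a $\GRscGLs$ step) cannot escape the prescribed rank region, through case analysis of Proposition \ref{LoRstars in TAB}. In case \ref{enum:D* cond 1}, case (1) of the proposition preserves rank; case (2) forces $\rho(\beta)\in[\rho(\alpha),\rho(\alpha)+1]$ with $\rho(\alpha)\geq\aleph_0$, so $\rho(\beta)=\rho(\alpha)$; case (3) forces $\rho(\alpha)=\rho(\beta\lvert_B)+1$ and $\rho(\beta)=\rho(\beta\lvert_B)+1$, again giving equal ranks. In cases \ref{enum:D* cond 2} and \ref{enum:D* cond 3}, Lemma \ref{GLscGRs big infinites} preserves ranks strictly exceeding $\codim[A] B$; case (2) of the proposition is inaccessible from $\rho(\alpha)=e$ (requiring $\rho(\beta\lvert_B)\geq\aleph_0$) and so is case (3) (requiring $\rho(\beta\lvert_B)<\rho(\alpha)$ versus $\rho(\beta\lvert_B)\geq e$), so rank $e$ is preserved; when $\rho(\alpha)>e$, case (3) gives $\rho(\beta)>\rho(\beta\lvert_B)\geq e$ and case (2) gives $\rho(\beta)\geq\aleph_0>e$, while the bound $\rho(\beta)\leq\rho(\beta\lvert_B)+\codim[A] B$ confines $\rho(\beta)$ within $(e,\aleph_0)$ in case \ref{enum:D* cond 2} and within $(e,\aleph_\kappa]$ in case \ref{enum:D* cond 3}.

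The main obstacle is the mixed finite--infinite sub-case of \ref{enum:D* cond 3}: one must carefully verify that Proposition \ref{LoRstars in TAB} case (3) does realise a single $\GLscGRs$ jump from any finite rank $n>e$ to any infinite rank $\nu\leq\aleph_\kappa$, and dually via $\GRscGLs$, so that the bridge between the finite and infinite parts of the $\GDs$-class is genuinely crossed in one step. Once this bridge is in place, the remainder of the argument is a systematic, if somewhat lengthy, case-by-case verification using the preceding lemmas.
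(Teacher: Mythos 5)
Your proposal is correct and follows essentially the same route as the paper: both reduce $\GDs$ to alternating $\GLscGRs$ chains, use Proposition \ref{LoRstars in TAB} to control how a single step can move the rank (the paper packages this as the symmetric inequalities $\rho(\alpha)\leq\rho(\gamma_2)+\codim[A]B$ and $\rho(\gamma_2)\leq\rho(\alpha)+\codim[A]B$, you as an invariant rank region, which amounts to the same induction), and invoke the same suite of lemmas for sufficiency. The only divergence is cosmetic: in the mixed finite--infinite sub-case of \ref{enum:D* cond 3} you bridge directly to rank $\rho(\beta)$ with one application of case (3) of Proposition \ref{LoRstars in TAB}, whereas the paper first jumps to $\aleph_0$ and then applies Lemma \ref{GLscGRs jumping through infinites}; both constructions are valid.
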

\begin{proof}
Let $\alpha, \beta\in \TAB$ be such that $\alpha\GDs\beta$. Then there exists a finite sequence $\gamma_1, \dots, \gamma_{2n}$ such that 
$$\alpha\GLs\gamma_1\GRs\gamma_2\GLs\dots \GLs\gamma_{2n-1}\GRs\gamma_{2n}=\beta.$$

In order to show that the conditions of the theorem are necessary, we focus on the different situations. 
Suppose first that $\rho(\alpha)=e$. 
Then, by Proposition \ref{GLt in TAB}, we have that $\im\alpha = \im\gamma_1$ and thus $\rho(\gamma_1)=e$ and $\gamma_1\in Q$. 
From this, we use Proposition \ref{GRs in TAB} to get that $\gamma_2\in Q$ and $\ker\gamma_1 = \ker\gamma_2$. But then $\im\gamma_1\cong \im\gamma_2$ so that $\rho(\gamma_2) = e$.
By induction, this argument gives us that $\rho(\gamma_{2i-1})=\rho(\gamma_{2i}) =e$ for all $1\leq i\leq n$ and thus $\rho(\beta)=e$.
Therefore, in all settings, a map in $T_{e^+}$ can only be $\GDs$-related to another one with the same rank e. 

Similarly, assume that $\codim[A]B = 1$. If $\rho(\alpha)$ is finite, then Lemma \ref{GLscGRs under dab 1} tells us that $\rho(\gamma_2)=\rho(\alpha)$ and by induction, we obtain that $\rho(\beta) = \rho(\alpha)$. 
Furthermore, for any $\delta\in\TAB$ we can see that if $\rho(\delta\lvert_B) + \codim[A]B \geq \aleph_0$, then this forces $\rho(\delta\lvert_B)\geq \aleph_0$ and if the inequalities $\rho(\delta\lvert_B)\leq \rho(\delta)\leq \rho(\delta\lvert_B)+\codim[A]B$ are satisfied, we also get that $\rho(\delta) = \rho(\delta\lvert_B)$. 
From this fact, if $\rho(\alpha) \geq \aleph_0$ then the third case of Proposition \ref{LoRstars in TAB} cannot happen, and in the other cases of the proposition, we directly have that $ \rho(\gamma_2\lvert_B) = \rho(\gamma_2) = \rho(\alpha)$.
By induction, we conclude again that $\rho(\beta) = \rho(\alpha)$. Case \ref{enum:D* cond 1} is therefore proved.

From now on, we assume that $\rho(\alpha) >e$ and $\codim[A]B\geq 2$. 
Using Proposition \ref{LoRstars in TAB}, we can get the following inequalities between the rank of $\gamma_2$ and that of $\alpha$, depending on the case $\alpha\GLscGRs\gamma_2$ falls into:
\begin{enumerate}
\item if $\gamma_2\in Q$ and $\im\alpha \cong\im\gamma_2$, then $\rho(\alpha) = \rho(\gamma_2)$;
\item if $\gamma_2\notin Q$ and $\im (B\gamma_2) \cong \im\alpha$, then $\rho(\alpha) = \rho(\gamma_2\lvert_B)\leq \rho(\gamma_2)\leq \rho(\gamma_2\lvert_B)+\codim[A]B = \rho(\alpha)+\codim[A]B$;
\item otherwise, $\gamma_2\notin Q$ and we have that $\rho(\alpha)\leq \rho(\gamma_2\lvert_B) + \codim[A]B \leq \rho(\gamma_2) + \codim[A]B$ and also $\rho(\gamma_2\lvert_B)<\rho(\alpha)$. These two inequalities combined give us that $\rho(\gamma_2) \leq \rho(\gamma_2\lvert_B) + \codim[A]B \leq \rho(\alpha) + \codim[A]B$.
\end{enumerate}
In all cases we can see that the inequalities $\rho(\alpha) \leq \rho(\gamma_2) + \codim[A]B$ and $\rho(\gamma_2)\leq \rho(\alpha) + \codim[A]B$ always hold.
Thus, by induction on $n$, we get that 
\begin{equation}\tag{$\star$}\label{eqn ineq pf D*}
\rho(\alpha)\leq \rho(\beta) + n\cdot\codim[A]B \qquad \text{and}\qquad \rho(\beta)\leq \rho(\alpha) + n\cdot\codim[A]B.
\end{equation}

If we assume $\rho(\alpha), \codim[A]B < \aleph_0$, then we have that $\rho(\beta) \leq \rho(\alpha) + n\cdot\codim[A]B <\aleph_0$ and $\rho(\beta)>e$ by the earlier argument since $\GDs$ is symmetric.
This gives us the first part of case \ref{enum:D* cond 2}.

Similarly, if $\aleph_0\leq \rho(\alpha)\leq\codim[A]B = \aleph_\kappa$, then $\rho(\beta) \leq \rho(\alpha) + n\cdot \codim[A]B = \codim[A]B$ and $\rho(\beta) >e$ as above. 
Thus we have $e < \rho(\beta) \leq \aleph_\kappa$, which corresponds to the first part of case \ref{enum:D* cond 3}.

Lastly, if $\rho(\alpha)\geq\aleph_0$ and $\rho(\alpha) > \codim[A]B$, then the left inequality of \eqref{eqn ineq pf D*} forces $\rho(\beta)>\codim[A]B$ and then $\rho(\beta)\geq \aleph_0$.
Hence, $\rho(\alpha)\leq \rho(\beta) + n\cdot\codim[A]B=\rho(\beta) \leq \rho(\alpha)+n\cdot\codim[A]B = \rho(\alpha)$ and thus $\rho(\beta) = \rho(\alpha)$. 
This argument shows that if $2\leq \codim[A]B < \aleph_0$ and $\rho(\alpha)\geq \aleph_0$, then $\rho(\beta) = \rho(\alpha)$, which corresponds to the second part of case \ref{enum:D* cond 2}. 
Similarly, if $\codim[A]B=\aleph_\kappa$ and $\rho(\alpha)>\aleph_\kappa$, we also have that $\rho(\beta)=\rho(\alpha)$, giving us the second part of case \ref{enum:D* cond 3}.

Since all the possible values for $\codim[A]B$ and $\rho(\alpha)$ are covered in the different arguments above, we have therefore proved that the conditions stated in the theorem are necessary conditions to have $\alpha\GDs\beta$.

\medskip
Conversely, we now assume that one of conditions \ref{enum:D* cond 1}, \ref{enum:D* cond 2} or \ref{enum:D* cond 3} hold and we verify that this is sufficient to get $\alpha\GDs\beta$. 
In other words, for any $\alpha,\beta\in\TAB$, we want to show that $\alpha(\GLscGRs)^n\beta$ for some natural number $n$ whenever one of these conditions is satisfied.

Notice that if $\beta \notin Q$, then there exists $\beta'\in Q$ such that $\im\beta' = \im\beta$ by Lemma \ref{Reg and non-reg maps with same image}. This means that $\beta'\GLs\beta$ by Proposition \ref{GLt in TAB} and thus $\beta'\GLscGRs\beta$. Therefore we can assume from now on that $\beta\in Q$, as this won't change the finiteness of the sequence of relations.

We already know from Lemma \ref{lem maps of same rank are D* rel} that if $\rho(\alpha)= \rho(\beta)$, then $\alpha\GDs\beta$, which shows that the appropriate part of cases \ref{enum:D* cond 1}, \ref{enum:D* cond 2} and \ref{enum:D* cond 3} are sufficient conditions to get that the two maps are $\GDs$-related.
The only possibilities left to verify are those where the ranks of $\alpha$ and $\beta$ are different and lie in the intervals given in conditions \ref{enum:D* cond 2} and \ref{enum:D* cond 3}.

Assume that $2\leq \codim[A]B$ and that $e<\rho(\alpha), \rho(\beta)<\aleph_0$, that is, either condition \ref{enum:D* cond 2} holds, or we have condition \ref{enum:D* cond 3} with two maps of finite rank. 
Then, by invoking the third part of Lemma \ref{GLscGRs under dab>1}, we have that $\alpha(\GLscGRs)^n\beta$ and therefore $\alpha\GDs\beta$.

From now on, we assume that condition \ref{enum:D* cond 3} holds with $\codim[A]B = \aleph_\kappa$ and $e < \rho(\alpha), \rho(\beta) \leq \aleph_\kappa$. Without loss of generality, we also assume that $\rho(\alpha) < \rho(\beta)$.
If both ranks are infinite, then we can use Lemma \ref{GLscGRs jumping through infinites} with $\nu = \rho(\beta)$ (since $\dim B \geq\rho(\beta) > \rho(\alpha) \geq \aleph_0$ in that case) to get a map $\gamma\in\TAB$ such that $\alpha\GLscGRs\gamma$ and $\rho(\gamma) = \rho(\beta)$. But then $\alpha(\GLscGRs)^2\beta$ using Lemma \ref{lem maps of same rank are D* rel} and thus $\alpha\GDs\beta$.

The only situation left is when $\rho(\alpha) < \aleph_0\leq \rho(\beta)$. For this, we write $A\alpha = \clotX[\set{x_k\alpha}]$, $B = \clotX[\set{y_k}\sqcup\set{b_i}]$ and $A = \clotX[\set{y_k}\sqcup\set{b_i}\sqcup\set{a_j}]$.
Since $e<\Card{K}<\aleph_0$ we set $L = K\priv\set{1}$, so that $\Card{L} = \Card{K} - 1$ and $\set{y_k} = \set{y_1} \sqcup \set{y_\ell}$. 
By assumption, we have that $\Card{K\sqcup I} = \dim B \geq \rho(\beta)\geq\aleph_0$ so that $\Card{I}\geq \aleph_0$, and $\Card{J} = \codim[A]B = \aleph_\kappa$. 
Therefore, there exist $S\subseteq J$ and $S'\subseteq I$ such that $\Card{S} = \Card{S'} = \aleph_0$ and we let $\phi\colon S\to S'$ be a bijection between these sets.
For all $j\in J$, we now set elements $z_j\in B$ by $z_j = b_{j\phi}$ if $j\in S$ and $z_j=b_1$ otherwise. 
With this, we define $\gamma_1\in \TAB$ as:
$$ \gamma_1 = \pmap{y_1 & y_\ell & b_i & a_j\\ c & y_\ell & c & z_j},$$
where $c\in \clotX[\set{y_\ell}]$ (which necessarily exists following our assumptions on $\TAB$ after Corollary \ref{cor tab not reg in general}).
Then we have that $B\gamma_1 = \clotX[\set{y_\ell}] \subsetneq \clotX[\set{y_\ell}\sqcup\set{z_j}] = A\gamma_1$ so that $\gamma_1\notin Q$. 
Moreover, we have that 
$$ \rho(\gamma_1\lvert_B) = \Card{L} < \Card{K} = \rho(\alpha) < \aleph_0 \leq \aleph_\kappa = \rho(\gamma_1\lvert_B) + \codim[A]B,$$
and thus $\alpha\GLscGRs\gamma_1$ by the third case of Proposition \ref{LoRstars in TAB}. 
Now, either $\aleph_0 = \rho(\gamma_1) = \rho(\beta)$ and we directly get that $\gamma_1\GLscGRs\beta$ by Lemma \ref{lem maps of same rank are D* rel}, or we have that $ \aleph_0 = \rho(\gamma_1)  < \rho(\beta)$.
If the latter occurs, then we also have that $\rho(\beta) \leq \min\set{\dim B, \codim[A]B}\leq \aleph_\kappa$ by the initial assumptions. 
Thus, we can invoke Lemma \ref{GLscGRs jumping through infinites} with $\nu = \rho(\beta)$ to get a map $\gamma_2\in \TAB$ such that $\rho(\gamma_2) = \rho(\beta)$ and $\gamma_1\GLscGRs\gamma_2$ which means that $\gamma_1(\GLscGRs)^2\beta$ by Lemma \ref{lem maps of same rank are D* rel}.
Therefore, in both situations, we have that $\alpha(\GLscGRs)^3\beta$ and thus $\alpha\GDs\beta$, which finishes the proof of the characterisation of $\GDs$.
\qed\end{proof}

\begin{remark}
From the characterisation of $\GDs$ in Theorem \ref{GDs in TAB} it is easy to see that if $B$ is finite dimensional and $\codim[A]B\geq 2$, then $\GDs$ is made of only $2$ classes, namely $T_{e^+}$ and $\Tec$.
\end{remark}
The last extended Green's relations investigated here are the relations $\GJs$ and $\GJt$, described in \cite{Fountain-abundant,RSG10}.
The relation $\GJs$ on a semigroup $S$ is given by the condition that $\alpha\GJs\beta$ if and only if $J^*(\alpha)=J^*(\beta)$, where $J^*(\alpha)$ is the smallest ideal of $S$ containing $\alpha$ and that is saturated by both $\GLs$ and $\GRs$, that is, the smallest ideal containing $\alpha$ that is the union of $\GLs$-classes and $\GRs$-classes of its elements.
The relation $\GJt$ is defined similarly on $S$ using the $\sim$-relations $\GLt$ and $\GRt$, and is thus a union of $\GLt$ and $\GRt$-classes of $S$.
Note also that we have the inclusions $\GDs\subseteq \GJs$, $\GDt\subseteq\GJt$, $\GDs\subseteq\GDt$ and $\GJs\subseteq\GJt$.

Since we have determined the $\GDs$-classes, and $\GDs$ is a subset of $\GJs$, $\GDt$ and $\GJt$, the characterisation of these remaining extended Green's relations on $\TAB$ is quite straightforward to determine. 
In order to work with $\GJs$, we use the equivalence proved by Fountain in \cite{Fountain-abundant}, namely that $\beta\in J^*(\alpha)$ if and only if there exist $\gamma_0, \dots, \gamma_n\in \TAB$ and $\lambda_1,\dots, \lambda_n, \mu_1, \dots, \mu_n\in \TAB^1$ such that $\alpha=\gamma_0, \beta=\gamma_n$ and $(\gamma_i, \lambda_i\gamma_{i-1}\mu_i)\in \GDs$ for all $1\leq i\leq n$.
An equivalent characterisation for $\GJt$ in terms of $\GDt$ has been given by Ren, Shum and Guo in \cite{RSG10}.
From this we have the short proposition that follows:

\begin{proposition}\label{GJs in TAB}
In \TAB, we have that $\GDs=\GJs$.
\end{proposition}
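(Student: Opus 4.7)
My plan is as follows. The inclusion $\GDs\subseteq\GJs$ holds in every semigroup, so the entire content of the statement lies in the reverse inclusion $\GJs\subseteq\GDs$. To establish this, I would invoke the chain characterisation of $J^*$ due to Fountain, cited just before the statement: if $\alpha\GJs\beta$, then there exist $\gamma_0=\alpha,\gamma_1,\dots,\gamma_n=\beta$ in $\TAB$ and $\lambda_i,\mu_i\in\TAB^1$ with $(\gamma_i,\lambda_i\gamma_{i-1}\mu_i)\in\GDs$ for every $1\leq i\leq n$. It then suffices to show that $\alpha$ and $\beta$ lie in a common $\GDs$-class.

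The key observation is the rank inequality $\rho(\lambda_i\gamma_{i-1}\mu_i)\leq\rho(\gamma_{i-1})$ provided by Lemma \ref{rank of product of maps less than min of ranks}. Combining this with Theorem \ref{GDs in TAB}, I would note that in each case the $\GDs$-classes form a partition of $\TAB$ into rank-intervals: singletons $\{\rho=k\}$ for each admissible $k$ when $\codim[A]B=1$; the class $T_{e^+}$, a single collapsed class of all finite ranks above $e$, and singletons $\{\rho=\mu\}$ for each infinite $\mu$ when $2\leq\codim[A]B<\aleph_0$; and an analogous structure with collapsed interval $\{e<\rho\leq\aleph_\kappa\}$ when $\codim[A]B=\aleph_\kappa$. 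Because these classes are disjoint intervals of ranks, they inherit a strict linear order from the ranks, and the above rank inequality forces the $\GDs$-class of $\lambda_i\gamma_{i-1}\mu_i$, and hence of $\gamma_i$, to be no greater than that of $\gamma_{i-1}$. Iterating along the chain yields that the $\GDs$-class of $\beta$ does not exceed that of $\alpha$. Since $\GJs$ is symmetric, applying the same argument to a chain witnessing $\beta\GJs\alpha$ gives the reverse inequality, and so $\alpha$ and $\beta$ share a $\GDs$-class, that is, $\alpha\GDs\beta$.

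The step I expect to require most care is verifying uniformly across the three cases of Theorem \ref{GDs in TAB} that $\rho_1\leq\rho_2$ forces the $\GDs$-class containing rank $\rho_1$ to be at or below that containing $\rho_2$. The only configuration that needs explicit justification is when $\gamma_{i-1}$ sits in an infinite-rank singleton class while $\lambda_i\gamma_{i-1}\mu_i$ drops into the collapsed interval; but the collapsed interval has strictly smaller supremum than every infinite-rank singleton above it, so this case is automatic. With that verification the proof reduces cleanly to combining Lemma \ref{rank of product of maps less than min of ranks} with the bookkeeping prescribed by Theorem \ref{GDs in TAB}.
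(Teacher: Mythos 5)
Your proof is correct, and it rests on the same three ingredients as the paper's own argument: Fountain's chain characterisation of $J^*$, the rank inequality of Lemma \ref{rank of product of maps less than min of ranks}, and the description of the $\GDs$-classes in Theorem \ref{GDs in TAB}. The difference is organisational rather than substantive, but it is a genuine gain in economy. The paper runs a separate induction along the Fountain chain for each configuration of $\codim[A] B$ and of the ranks involved (the case $\codim[A] B=1$, the case $\rho(\alpha)=e$, the case $\rho(\beta)>\codim[A] B=\aleph_\kappa$, the case $\rho(\beta)\geq\aleph_0>\codim[A] B$, and finally the collapsed classes, which are handled by the inclusion $\GDs\subseteq\GJs$). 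You instead observe once and for all that in every case $\GDs$ is a rank-determined equivalence whose classes partition the set of achievable ranks into pairwise disjoint convex pieces, hence inherit a linear order, and that $\rho(\lambda\gamma\mu)\leq\rho(\gamma)$ forces the class to be non-increasing along a Fountain chain; the symmetry of $\GJs$ then closes the argument uniformly. The configuration you flag as delicate, a drop from an infinite-rank singleton class into the collapsed interval, is indeed harmless for exactly the reason you give. The only point worth making explicit in a written version is the order-theoretic fact underpinning the monotonicity step: since the rank-intervals are disjoint and convex, $\rho_1\leq\rho_2$ forces the interval containing $\rho_1$ to lie at or below the interval containing $\rho_2$. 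With that stated, your argument is complete and subsumes all of the paper's cases at once.
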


\begin{proof}
Since we know that $\GDs\subseteq \GJs$, it remains to show the converse.
To this end, we are going to determine the $\GJs$-classes of certain cases by describing the principal $*$-ideal generated by specific elements. 
Combining these cases with the description of the $\GDs$-classes given in Theorem \ref{GDs in TAB} will finish the proof.

As a general setup, we consider $\alpha, \beta\in\TAB$ with $\beta\in J^*(\alpha)$. 
Then, there exist a finite sequence $\gamma_0, \dots, \gamma_n\in\TAB$ and some finite sets $\set{\lambda_i}, \set{\mu_i}\subseteq\TAB^1$ such that $\gamma_0 = \alpha$, $\gamma_n = \beta$ and $(\gamma_i, \lambda_i\gamma_{i-1} \mu_i)\in\GDs$ for all $1\leq i\leq n$.

Assume first that $\codim[A]B = 1$. Then, Theorem \ref{GDs in TAB} gives us that $\rho(\gamma_1) = \rho(\lambda_1\alpha\mu_1)\leq \rho(\alpha)$ and by induction, we obtain $\rho(\gamma_i)\leq \rho(\alpha)$ for all $1\leq i \leq n$, from which we have that $\rho(\beta) \leq \rho(\alpha)$. 
Hence, $J^*(\alpha) = \set{\beta\in\TAB \mid \rho(\beta)\leq \rho(\alpha)}$.
Reversing the roles of $\alpha$ and $\beta$ we have that $J^*(\beta) = \set{\alpha\in\TAB \mid \rho(\alpha)\leq\rho(\beta)}$. 
Therefore $\alpha\GJs\beta$ if and only if $J^*(\alpha)=J^*(\beta)$, which forces $\rho(\alpha) = \rho(\beta)$.

From now on, we assume that $\codim[A]B\geq 2$.
In the case where $\rho(\alpha) = e$, then we also have that $\rho(\lambda_1\alpha\mu_1)=e$. 
Since $\gamma_1\GDs\lambda_1\alpha\mu_1$, this forces $\rho(\gamma_1) = \rho(\lambda_1\alpha\mu_1)$ by condition \ref{enum:D* cond 2} of Theorem \ref{GDs in TAB}, and thus $\rho(\gamma_1)=e$. 
By induction on the $\gamma_i$'s, we get that $\rho(\beta) = e = \rho(\alpha)$. 
Therefore $J^*(\alpha) = T_{e^+}$, from which we have that if $\alpha\GJs\beta$ with $\rho(\alpha)=e$, then $\rho(\beta) = \rho(\alpha)$.

Consider the case when $\codim[A]B = \aleph_\kappa$ and suppose that $\rho(\beta) > \codim[A]B$. 
Then, from the fact that $\beta = \gamma_n \GDs \lambda_n\gamma_{n-1}\mu_n$, we necessarily have that $\rho(\beta) = \rho(\lambda_n\gamma_{n-1}\mu_n)$ from \ref{enum:D* cond 3} of Theorem \ref{GDs in TAB} and therefore $\rho(\gamma_{n-1}) \geq \rho(\beta) > \codim[A]B$. 
By reverse induction on $i$ from $n-1$ to $1$, we get that $\rho(\gamma_{i-1}) \geq \rho(\gamma_i) > \codim[A]B$ for all $i$ and thus $\rho(\alpha) \geq \rho(\beta) > \codim[A]B$. 
This shows that a map $\alpha$ can only contain in its $J^*$-ideal a map $\beta$ with $\rho(\beta)>\codim[A]B \geq \aleph_0$ if $\rho(\alpha) \geq \rho(\beta)$ and $\rho(\alpha)>\codim[A]B$ in the first place. 
Together with the reverse statement, we conclude that if $\rho(\alpha) > \codim[A]B\geq \aleph_\kappa$, then $\alpha\GJs\beta$ implies $\rho(\alpha) = \rho(\beta)$.

Using a similar argument when $\codim[A]B < \aleph_0$, we also have that if $\beta\in J^*(\alpha)$ with $\rho(\beta) \geq \aleph_0$, then $\rho(\alpha) \geq \rho(\beta)$.
Therefore, this case gives us that if $\rho(\alpha)\geq\aleph_0 > \codim[A]B$ and $\alpha\GJs\beta$, then $\rho(\alpha) =\rho(\beta)$.

In all of the above cases, we can see that if two maps $\alpha$ and $\beta$ are $\GJs$-related, then they have the same rank and thus they are $\GDs$-related by Lemma \ref{lem maps of same rank are D* rel}. 
The remaining cases to consider are when we either have that $2\leq \codim[A]B<\aleph_0$ and $e<\rho(\alpha), \rho(\beta)<\aleph_0$, or we have that $\codim[A]B = \aleph_\kappa$ and $e<\rho(\alpha), \rho(\beta)\leq \aleph_\kappa$.
However, these cases are already given to be $\GDs$-classes as the first part of conditions \ref{enum:D* cond 2} and \ref{enum:D* cond 3} in Theorem \ref{GDs in TAB}, and are thus also $\GJs$-classes, which finishes to show that $\GJs = \GDs$.
\qed\end{proof}

Finally, for the relations $\GDt$ and $\GJt$ there are always only two classes, since the corank of $\AlgB$ has no impact in that situation as given by this last proposition.

\begin{proposition}
In \TAB, the only $\GDt$ and $\GJt$ classes are $T_{e^+}$ and $\Tec$.
\end{proposition}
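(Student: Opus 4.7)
The plan is to establish both characterisations together by first determining the $\GDt$-classes and then lifting the conclusion to $\GJt$ via the inclusion $\GDt\subseteq \GJt$.

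I would begin with the preliminary observation that $T_{e^+}\subseteq Q$: if $\rho(\alpha)=e$, then $A\alpha$ attains the minimum possible rank of a non-empty subalgebra of $\AlgA$, and since $B\alpha$ is a non-empty subalgebra of $A\alpha$, it must equal $A\alpha$, so $\alpha\in Q$. (Concretely, when $e=0$ we have $A\alpha\subseteq\Csts\subseteq B\alpha$, and when $e=1$ any non-empty subalgebra of a rank-$1$ subalgebra already contains a basis element of the latter, forcing equality.)

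Next, I would check that $T_{e^+}$ and $\Tec$ are each contained in a single $\GDt$-class. For $T_{e^+}$: any two of its elements lie in $Q$ and share rank $e$, so Proposition \ref{GD in TAB} makes them $\GD$-related, hence $\GDt$-related. For $\Tec$: given $\alpha,\beta\in\Tec$, Lemma \ref{Reg and non-reg maps with same image} supplies, whenever $\alpha$ or $\beta$ is regular, a non-regular companion with the same image. Using Proposition \ref{GLt in TAB} to link a map to its companion via $\GLt$, and Proposition \ref{GRt in TAB} to obtain that any two non-regular maps are automatically $\GRt$-related, I can chain $\alpha\,\GLt\,\alpha'\,\GRt\,\beta'\,\GLt\,\beta$, giving $\alpha\GDt\beta$.

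To lift the classification to $\GJt$, I would verify that $T_{e^+}$ is a $\sim$-ideal. It is a two-sided ideal by Lemma \ref{rank of product of maps less than min of ranks}. The $\GLt$-saturation follows because $\alpha\GLt\beta$ forces $\im\alpha=\im\beta$, hence $\rho(\beta)=\rho(\alpha)=e$. For $\GRt$-saturation, if $\alpha\in T_{e^+}$ then $\alpha\in Q$ by the preliminary step, so Proposition \ref{GRt in TAB} gives that any $\beta$ with $\alpha\GRt\beta$ also lies in $Q$ with $\ker\alpha=\ker\beta$, whence $\rho(\beta)=\dim(A/\ker\alpha)=e$. Thus no element of $\Tec$ can be $\GJt$-related to an element of $T_{e^+}$, and combined with $\GDt\subseteq\GJt$, this forces $\GJt$ to have exactly the same two classes as $\GDt$. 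The main obstacle is the $\GRt$-saturation of $T_{e^+}$, which crucially depends on the preliminary fact that maps of minimum rank are automatically regular; without it the promotion from the $\GDt$-classification to the $\GJt$-classification would fail.
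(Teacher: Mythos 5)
Your proof is correct and its core follows the paper's own route: the same chain $\alpha\GLt\alpha'\GRt\beta'\GLt\beta$ through non-regular companions (via Lemma \ref{Reg and non-reg maps with same image}) shows $\Tec$ lies in one class, and rank-preservation under $\GLt$ and $\GRt$ at rank $e$ gives the separation. Two points where you diverge, both to your credit: first, you explicitly justify the fact that $T_{e^+}\subseteq Q$ (via the exchange property when $e=1$, via constants when $e=0$), which the paper uses silently in the phrase ``together with the fact that $\gamma\in Q$''; second, for $\GJt$ the paper only gestures at ``arguments similar to those used in the proof of Proposition \ref{GJs in TAB}'', i.e.\ rank-chasing along the chain $\gamma_i \mathrel{\GDt} \lambda_i\gamma_{i-1}\mu_i$, whereas you instead exhibit $T_{e^+}$ directly as a $\sim$-ideal (a two-sided ideal saturated by $\GLt$ and $\GRt$), so that $\widetilde{J}(\alpha)\subseteq T_{e^+}$ for any $\alpha$ of rank $e$ and the two-class structure for $\GJt$ follows at once. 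Your packaging of the separation step — saturation of $T_{e^+}$ — simultaneously settles the $\GDt$ and $\GJt$ converses, which is cleaner than running the induction twice.
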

\begin{proof}
Let $\alpha, \beta\in \TAB$ be such that $\rho(\alpha) = \rho(\beta) = e$.
Then $\alpha\GDs\beta$ by Lemma \ref{lem maps of same rank are D* rel}, and thus $\alpha\GDt\beta$ since $\GDs\subseteq \GDt$.
On the other hand, assume that the rank of both $\alpha$ and $\beta$ is strictly greater than $e$.
If $\alpha\in Q$ then, by Lemma \ref{Reg and non-reg maps with same image}, there exists $\alpha'\in \Qc$ such that $\im\alpha = \im\alpha'$, while if $\alpha\in \Qc$ in the first place, we simply set $\alpha' = \alpha$. 
In both cases, we have that $\alpha\GLt\alpha'$ and similarly, $\beta\GLt\beta'$ for some $\beta'\in \Qc$.
Then, by Proposition \ref{GRt in TAB}, we get that $\alpha'\GRt\beta'$, and so $\alpha\GLt\circ\GRt\circ\GLt\beta$. Therefore $\alpha\GDt\beta$ for any $\alpha,\beta\in \Tec$.

For the converse, notice first that if $\gamma\in\TAB$ is such that $\rho(\gamma)=e$, then for any $\delta\in\TAB$ we have that $\rho(\delta)=e$ whenever $\gamma\GRt\delta$ or $\gamma\GLt\delta$.
Indeed, if $\gamma\GRt\delta$ then, by Proposition \ref{GRt in TAB} together with the fact that $\gamma\in Q$, we get that $\delta\in Q$ and $\ker\delta = \ker\gamma$. 
Consequently, $\im\delta\cong\im\gamma$ and so $\rho(\delta)=\rho(\gamma) = e$.
Similarly, if $\gamma\GLt\delta$, then Proposition \ref{GLt in TAB} gives us that $\im\delta = \im\gamma$ and thus $\rho(\delta)=\rho(\gamma)=e$, proving the claim.
Now consider $\alpha, \beta\in\TAB$ such that $\alpha\GDt\beta$.
Then there exists a finite sequence of composition of $\GLt$ and $\GRt$ relating $\alpha$ to $\beta$. 
If $\rho(\alpha)=e$, then the arguments exposed above gives us that all maps in that sequence have rank $e$, and thus $\rho(\beta) = e$. 
Similarly, if $\rho(\alpha)\neq e$, then by symmetry of the arguments, we necessarily get that $\rho(\beta)\neq e$, which concludes the proof of the characterisation of $\GDt$.

\medskip
Invoking arguments similar to those used in the proof of Proposition \ref{GJs in TAB} together with the newfound characterisation of $\GDt$, one can show that a map in $T_{e^+}$ can only be $\GJt$-related to another map in $T_{e^+}$. 
Since $\GDt\subseteq\GJt$ and $\GDt$ only has two classes, it follows that these equivalence relations are in fact equal.
\qed\end{proof}

\section*{Acknowledgements}
I wish to thank the referee for their many suggestions to improve this paper, and for a notable remark which, together with a question from Carl-Fredrik Nyberg-Brodda, allowed me to strengthen the results of the last section.
I am also grateful to Pr. Victoria Gould for her generous supervision and her keen eye on chasing the Francisation of my writing.

\end{document}